\documentclass[11pt]{article}
\usepackage{amssymb}
\usepackage{amsmath}
\usepackage{mathrsfs}
\usepackage{graphics}
\usepackage{graphicx}
\usepackage{xcolor}
\usepackage{subfigure}
\usepackage[T1]{fontenc}
\usepackage{latexsym,amssymb,amsmath,amsfonts,amsthm}\usepackage{txfonts}
\topmargin =0mm \headheight=0mm \headsep=0mm \textheight =220mm
\textwidth =160mm \oddsidemargin=0mm\evensidemargin =0mm
\sloppy \brokenpenalty=10000

\newcommand{\be}{\begin{eqnarray}}
\newcommand{\ben}{\begin{eqnarray*}}
\newcommand{\en}{\end{eqnarray}}
\newcommand{\enn}{\end{eqnarray*}}

\newtheorem{theorem}{Theorem}[section]
\newtheorem{lemma}{Lemma}[section]
\newtheorem{prp}[theorem]{Proposition}
\newtheorem{thm}[theorem]{Theorem}
\newtheorem{cor}[theorem]{Corollary}
\newtheorem{dfn}{Definition}[section]

\newtheorem{remark}{Remark}

%
\begin{document}
\renewcommand{\theequation}{\arabic{section}.\arabic{equation}}
\begin{titlepage}
\title{\bf Large deviation principles for first-order scalar conservation laws
with stochastic forcing}
\author{ Zhao Dong$^{1}$,\ Jiang-Lun Wu$^{2}$, Rangrang Zhang$^{3,}\footnote{Corresponding author.}$, Tusheng Zhang$^{4}$\\
{\small $^1$ RCSDS, Academy of Mathematics and Systems Science, Chinese Academy of Sciences, Beijing 100190, China}\\
{\small $^2$  Department of  Mathematics,
Swansea University, Bay Campus, Swansea SA1 8EN, UK}\\
{\small $^3$  School of Mathematics and Statistics,
Beijing Institute of Technology, Beijing 100081, China}\\
{\small $^4$ School of Mathematics, University of Manchester, Oxford Road, Manchester M13 9PL, England, UK}\\
({\small{\sf dzhao@amt.ac.cn},\ {\sf j.l.wu@swansea.ac.uk},\ {\sf rrzhang@amss.ac.cn}, \ {\sf tusheng.zhang@manchester.ac.uk}})}
\date{}
\end{titlepage}
\maketitle

\noindent\textbf{Abstract}:
 In this paper, we established the Freidlin-Wentzell type large deviation principles for first-order scalar conservation laws perturbed by small multiplicative noise. Due to the lack
 of the viscous terms in the stochastic equations, the kinetic solution to the Cauchy problem for these first-order conservation laws is studied. Then, based on the well-posedness
 of the kinetic solutions, we show that the large deviations holds by utilising the weak convergence approach.

\noindent \textbf{AMS Subject Classification}:\ \ Primary 60F10; Secondary 60H15.

\noindent\textbf{Keywords}: large deviations; first-order conservation laws; weak convergence approach; kinetic solution.

\section{Introduction}
This paper concerns the asymptotic behaviour of stochastic scalar conservation laws with small multiplicative noise. The (deterministic) conservation laws (in both scalar and vectorial) are fundamental to our understanding of the space-time evolution laws of interesting physical quantities, in that they {describe} (dynamical) processes {that} can or cannot occur in nature. Mathematically or statistically, such physical laws should incorporate with noise influences, due to the lack of knowledge of certain physical parameters as well as bias or incomplete measurements arising in {experiments} or modeling. More precisely, fix any
$T>0$ and let $(\Omega,\mathcal{F},\mathbb{P},\{\mathcal{F}_t\}_{t\in
[0,T]},(\{\beta_k(t)\}_{t\in[0,T]})_{k\in\mathbb{N}})$ be a stochastic basis. Without loss of generality, here the filtration $\{\mathcal{F}_t\}_{t\in [0,T]}$ is assumed to be complete and $\{\beta_k(t)\}_{t\in[0,T]},k\in\mathbb{N}$, are independent (one-dimensional)  $\{\mathcal{F}_t\}_{t\in [0,T]}-$Wiener processes. We use $\mathbb{E}$ to denote the expectation with respect to $\mathbb{P}$.
Fix any $N\in\mathbb{N}$, let $\mathbb{T}^N\subset\mathbb{R}^N$ denote the $N-$dimensional torus (suppose the periodic length is $1$).
We are concerned with the following scalar conservation law with stochastic forcing
$$du+div(A(u))dt=\Phi(u) dW(t) \quad {\rm{in}} \ \mathbb{T}^N\times[0,T]$$
for a random field $u:(\omega,x,t)\in\Omega\times\mathbb{T}^N\times[0,T]\mapsto u(\omega,x,t):=u(x,t)\in\mathbb{R}$,
that is, the equation is periodic in the space variable $x\in \mathbb{T}^N$,
where the flux function $A:\mathbb{R}\to\mathbb{R}^N$ and the coefficient $\Phi:\mathbb{R}\to\mathbb{R}$ are measurable and fulfill certain conditions specified later,
and $W$ is a cylindrical Wiener process defined on a given (separable) Hilbert space $U$ with
the form $W(t)=\sum_{k\geq 1}\beta_k(t) e_k,t\in[0,T]$, where $(e_k)_{k\geq 1}$ is a complete orthonormal base in the Hilbert space $U$. We consider the following Cauchy problem
\begin{eqnarray}\label{P-19}
\left\{
  \begin{array}{ll}
  du+div(A(u))dt=\Phi(u) dW(t) \quad {\rm{in}}\ \mathbb{T}^N\times (0,T],\\
u(\cdot,0)=u_0(\cdot) \quad {\rm{on}}\ \mathbb{T}^N.
  \end{array}
\right.
\end{eqnarray}

For the deterministic case, i.e., $\Phi\equiv0$, (\ref{P-19}) is well studied in the PDEs literature, see e.g. the monograph \cite{Dafermos} and the most recent reference Ammar,
Wittbold and Carrillo \cite{K-P-J} (and references therein). As well known, the Cauchy problem
for the deterministic first-order PDE (\ref{P-19}) does not admit any (global) smooth solutions, but there exist infinitely many weak solutions to the deterministic Cauchy problem and an additional entropy condition has to be added to get the uniqueness and further to identify the physical weak solution. The notion of entropy solutions for the deterministic problem in the $L^{\infty}$ framework was initiated by Otto in \cite{O}. Moreover, Porretta and Vovelle \cite{P-V} studied the problem in the $L^1$  setting, that is, the solutions are allowed to be unbounded. In order to deal with unbounded solutions, they defined a notion of renormalized entropy solutions which generalizes Otto's original definition of entropy solutions. The kinetic formulation of weak entropy solution of the Cauchy problem for a general multidimensional scalar conservation law, named as the kinetic system, is derived by Lions, Perthame and Tadmor in \cite{L-P-T}. They further discussed the relationship between entropy solutions and the kinetic system.

Having a stochastic forcing term in (\ref{P-19}) is very natural and important for various modeling problems arising in a wide variety of fields, e.g., physics, engineering, biology and so on. The Cauchy problem for the stochastic equation (\ref{P-19}) driven by additive noise has been studied by Kim in \cite{K} wherein the author proposed a method of compensated compactness to prove the existence of a stochastic weak entropy solution via vanishing viscosity approximation. Moreover, a Kruzkov-type method was used there to prove the uniqueness. Furthermore, Vallet and Wittbold \cite{V-W} extended the results of Kim to the multi-dimensional Dirichlet problem with additive noise. By utilising the vanishing viscosity method, Young measure techniques, and Kruzkov doubling variables technique, they managed to show the existence and uniqueness of the stochastic entropy solutions. Concerning the case of the
equation with multiplicative noise, for Cauchy problem over the whole spatial space, Feng and Nualart \cite{F-N} introduced a notion of strong entropy solutions in order to prove the uniqueness of  the entropy solution. Using the vanishing viscosity and compensated compactness arguments, they established the existence of stochastic strong entropy solutions only in the one-dimensional space case. On the other hand, using a kinetic formulation, Debussche and Vovelle \cite{D-V-1} solved the Cauchy problem for (\ref{P-19}) in any dimension. They made use of a notion of kinetic solutions  developed by Lions, Perthame and Tadmor for deterministic, first-order scalar conservation laws in \cite{L-P-T}. In view of the equivalence between kinetic formulation and entropy solution, they obtained the existence and uniqueness of the entropy solutions. The long-time behavior of periodic scalar first-order conservation laws with additive stochastic forcing under an hypothesis of non-degeneracy of the flux function is studied by Debussche and Vovelle in \cite{D-V-2}. For sub-cubic fluxes, they show the existence of an invariant measure. Moreover, for sub-quadratic fluxes, they prove the uniqueness and ergodicity of the invariant measure.

From statistical mechanics point of view, asymptotic analysis for vanishing the noise force is
important and interesting for studying stochastic conservation laws, in which establishing large deviation principles is a core step for finer analysis as well as gaining deeper insight for the described physical evolutions. Due to lack of second order elliptic operators for the space variable, the asymptotic analysis for stochastic conservation laws is really challenging and all those existing approaches for establishing large deviation principles seem unapplicable. To our knowledge, Mariani \cite{Mariani} (see also \cite{Mariani0} for more {details}) is the first work towards large deviations for stochastic conservation laws, wherein the author considered a family of stochastic conservation laws as parabolic SPDEs with additional small viscosity term and small (spatially) regularized (i.e., spatially smoothing) noises. By a very interesting scaling procedure and deep insightful observations from interacting particle {systems}, Mariani has succeeded to establish large deviation principles by vanishing viscosity and noise terms simultaneously in a smart choice of scalings. While, large deviations for the stochastic first-order conservation laws remain open. Due to the fact that the entropy  solutions are living in rather irregular spaces comparing to various type solutions for parabolic SPDEs, it is indeed a challenge to establish large deviation principles for the first-order conservation laws with general noise force.

The purpose of this paper is to prove the Freidlin-Wentzell type large deviation principle (LDP) for the first-order stochastic scalar conservation law in $L^1([0,T];L^1(\mathbb{T}^N))$, which provides the exponential decay of small probabilities associated with the corresponding stochastic dynamical systems with small noise. An important tool for studying the Freidlin-Wentzell's LDP is the weak convergence approach, which is developed by Dupuis and Ellis in \cite{DE}. The key idea of this approach is to prove certain variational representation formula about the Laplace transform of bounded continuous functionals, which then leads to the verification of the equivalence between the LDP and the Laplace principle. In particular, for Brownian functionals, an elegant variational representation formula has been established by Bou\'{e} and Dupuis in \cite{MP} and by Budhiraja and Dupuis in \cite{BD}. Recently, a sufficient condition to verify the large deviation criteria of Budhiraja, Dupuis and Maroulas for functionals of Brownian motions is proposed by Matoussi, Sabbagh and Zhang in \cite{MSZ}, which turns out to be more suitable for SPDEs arising from fluid mechanics. Thus, in the present paper, we adopt this new sufficient condition.

Our proof strategy mainly consists of the following procedures. As an important part of the proof, we need to obtain the global well-posedness of the associated skeleton equations. For showing the uniqueness, we apply the doubling of variables method. For showing the existence result, we first apply the vanishing viscosity method to construct a sequence of approximating equations as in \cite{D-V-1}.  Then, we prove that the family of the solutions of the approximating equations is compact in an appropriate space and that any limit of the approximating solutions gives rise to a solution of the associated skeleton equation.
To complete the proof of the large deviation principle, we also need to study the weak convergence of the small noise perturbations of the problem (\ref{P-19}) in the random directions of the Cameron-Martin space of the driving Brownian motions. To verify the convergence of the randomly perturbed equation to the corresponding unperturbed equation in $L^1([0,T];L^1(\mathbb{T}^N))$, the doubling of variables method plays a key role.

The rest of the paper is organised as follows. The mathematical formulation of stochastic scalar conservation laws is presented in Section 2. In Section 3, we introduce the weak convergence method and state our main result. Section 4 is devoted to the study of the associated skeleton equations. The large deviation principle is proved in Section 5.

\section{Preliminaries}
Let $\mathcal{L}(K_1,K_2)$ (resp. $\mathcal{L}_2(K_1,K_2)$) be the space of bounded (resp. Hilbert-Schmidt) linear operators from a Hilbert space $K_1$ to another Hilbert space $K_2$, whose norm is denoted by $\|\cdot\|_{\mathcal{L}(K_1, K_2)}$(resp. $\|\cdot\|_{\mathcal{L}_2(K_1, K_2)})$. Further, $C_b$ represents the space of bounded, continuous functions and $C^1_b$ stands for the space of bounded, continuously differentiable functions having bounded first order derivative. Let $\|\cdot\|_{L^p}$ denote the norm of Lebesgue space $L^p(\mathbb{T}^N)$ for $p\in (0,\infty]$. In particular, set $H=L^2(\mathbb{T}^N)$ with the corresponding norm $\|\cdot\|_H$. For all $a\geq0$, let
$H^a(\mathbb{T}^N)=W^{a,2}(\mathbb{T}^N)$ be the usual Sobolev space of order $a$ with the norm
\[
\|u\|^2_{H^a}=\sum_{|\alpha|=|(\alpha_1,...,\alpha_N)|=\alpha_1+\cdots+\alpha_N\leq a}\int_{\mathbb{T}^N}|D^{\alpha}u(x)|^2dx.
\]
 $H^{-a}(\mathbb{T}^N)$  stands for the topological dual of $H^a(\mathbb{T}^N)$, whose norm is denoted by $\|\cdot\|_{H^{-a}}$. Moreover, we use the brackets $\langle\cdot,\cdot\rangle$ to denote the duality between $C^{\infty}_c(\mathbb{T}^N\times \mathbb{R})$ and the space of distributions over $\mathbb{T}^N\times \mathbb{R}$.
Similarly, for $1\leq p\leq \infty$ and $q:=\frac{p}{p-1}$, the conjugate exponent of $p$, we denote
\[
\langle F, G \rangle:=\int_{\mathbb{T}^N}\int_{\mathbb{R}}F(x,\xi)G(x,\xi)dxd\xi, \quad F\in L^p(\mathbb{T}^N\times \mathbb{R}), G\in L^q(\mathbb{T}^N\times \mathbb{R}),
\]
and also for a measure $m$ on the Borel measurable space $\mathbb{T}^N\times[0,T]\times \mathbb{R}$
\[
m(\phi):=\langle m, \phi \rangle:=\int_{\mathbb{T}^N\times[0,T]\times \mathbb{R}}\phi(x,t,\xi)dm(x,t,\xi), \quad  \phi\in C_b(\mathbb{T}^N\times[0,T]\times \mathbb{R}).
\]
\subsection{Hypotheses}
For the flux function $A$ and the coefficient $\Phi$ of (\ref{P-19}), we assume
\begin{description}
  \item[\textbf{Hypothesis H}] The flux function $A$ belongs to $C^2(\mathbb{R};\mathbb{R}^N)$ and its derivative $a$ has at most polynomial growth. That is, there exist constants $C>0, p>1$ such that
     \begin{eqnarray}\label{qeq-22}
     |a(\xi)-a(\zeta)|\leq \Gamma(\xi,\zeta)|\xi-\zeta|, \quad \Gamma(\xi,\zeta)=C(1+|\xi|^{p-1}+|\zeta|^{p-1}).
      \end{eqnarray}
      For each $u\in \mathbb{R}$, the map $\Phi(u): U\rightarrow H$ is defined by $\Phi(u) e_k=g_k(\cdot, u)$, where each $g_k(\cdot,u)$ is a regular function on $\mathbb{T}^N$.
      More precisely, we assume that $g_k\in C(\mathbb{T}^N\times \mathbb{R})$ with the following bounds
\begin{eqnarray}\label{equ-28}
G^2(x,u)=\sum_{k\geq 1}|g_k(x,u)|^2&\leq& D_0(1+|u|^2),\\
\label{equ-29}
\sum_{k\geq 1}|g_k(x,u)-g_k(y,v)|^2&\leq& D_1\Big(|x-y|^2+{|u-v|^2}\Big),
\end{eqnarray}
for $x, y\in \mathbb{T}^N, u,v\in \mathbb{R}$.

\end{description}
Since $\|g_k\|_{H}\leq\|g_k\|_{C(\mathbb{T}^N)}$, we deduce that $\Phi(u)\in \mathcal{L}_2(U,H)$, for each $u\in \mathbb{R}$. Moreover, it follows from (\ref{equ-28}) and (\ref{equ-29}) that
\begin{eqnarray}\label{equ-30}
\|\Phi(u)\|^2_{\mathcal{L}_2(U,H)}&\leq& D_0(1+\|u\|^2_H),\\
\label{equ-30-1}
\|\Phi(u)-\Phi(v)\|^2_{\mathcal{L}_2(U,H)}&\leq& D_1\|u-v\|^2_H.
\end{eqnarray}
\subsection{Kinetic solution and generalized kinetic solution}
Let us recall the notion of a solution to equation (\ref{P-19}) from \cite{D-V-1, D-V-2}. Keeping in mind that we are working on the stochastic basis $(\Omega,\mathcal{F},\mathbb{P},\{\mathcal{F}_t\}_{t\in [0,T]},(\beta_k(t))_{k\in\mathbb{N}})$.
\begin{dfn}(Kinetic measure)\label{dfn-3}
 A map $m$ from $\Omega$ to the set of non-negative, finite measures over $\mathbb{T}^N\times [0,T]\times\mathbb{R}$ is said to be a kinetic measure, if
\begin{description}
  \item[1.] $ m $ is measurable, that is, for each $\phi\in C_b(\mathbb{T}^N\times [0,T]\times \mathbb{R}), \langle m, \phi \rangle: \Omega\rightarrow \mathbb{R}$ is measurable,
  \item[2.] $m$ vanishes for large $\xi$, i.e.,
\begin{eqnarray}\label{equ-37}
\lim_{R\rightarrow +\infty}\mathbb{E}[m(\mathbb{T}^N\times [0,T]\times B^c_R)]=0,
\end{eqnarray}
where $B^c_R:=\{\xi\in \mathbb{R}, |\xi|\geq R\}$
  \item[3.] for every $\phi\in C_b(\mathbb{T}^N\times \mathbb{R})$, the process
\[
(\omega,t)\in\Omega\times[0,T]\mapsto \int_{\mathbb{T}^N\times [0,t]\times \mathbb{R}}\phi(x,\xi)dm(x,s,\xi)\in\mathbb{R}
\]
is predictable.
\end{description}
\end{dfn}
Let $\mathcal{M}^+_0(\mathbb{T}^N\times [0,T]\times \mathbb{R})$ be the space of all bounded, nonnegative random measures $m$ satisfying (\ref{equ-37}).

\begin{dfn}(Kinetic solution)\label{dfn-1}
Let $u_0\in L^{\infty}(\mathbb{T}^N)$. A measurable function $u: \mathbb{T}^N\times [0,T]\times\Omega\rightarrow \mathbb{R}$ is called a kinetic solution to (\ref{P-19}) with initial datum $u_0$, if
\begin{description}
  \item[1.] $(u(t))_{t\in[0,T]}$ is predictable,
  \item[2.] for any $p\geq1$, there exists $C_p\geq0$ such that
\[
\mathbb{E}\left(\underset{0\leq t\leq T}{{\rm{ess\sup}}}\ \|u(t)\|^p_{L^p(\mathbb{T}^N)}\right)\leq C_p,
\]
\item[3.] there exists a kinetic measure $m$ such that $f:= I_{u>\xi}$ satisfies: for all $\varphi\in C^1_c(\mathbb{T}^N\times [0,T)\times \mathbb{R})$,
\begin{eqnarray}\notag
&&\int^T_0\langle f(t), \partial_t \varphi(t)\rangle dt+\langle f_0, \varphi(0)\rangle +\int^T_0\langle f(t), a(\xi)\cdot \nabla \varphi (t)\rangle dt\\
\label{P-21}
&=& -\sum_{k\geq 1}\int^T_0\int_{\mathbb{T}^N} g_k(x)\varphi (x,t,u(x,t))dxd\beta_k(t) \\ \notag
&& -\frac{1}{2}\sum_{k\geq 1}\int^T_0\int_{\mathbb{T}^N}\partial_{\xi}\varphi (x,t,u(x,t))G^2(x)dxdt+ m(\partial_{\xi} \varphi), \ a.s. ,
\end{eqnarray}
where $f_0=I_{u_0>\xi}$, $u(t)=u(\cdot,t,\cdot)$, $G^2=\sum^{\infty}_{k=1}|g_k|^2$ and $a(\xi):=A'(\xi)$.
\end{description}
\end{dfn}

In order to prove the existence of a kinetic solution, the generalized kinetic solution was introduced in \cite{D-V-1}.
\begin{dfn}(Young measure)
 Let $(X,\lambda)$ be a finite measure space. Let $\mathcal{P}_1(\mathbb{R})$ denote the set of all (Borel) probability measures on $\mathbb{R}$. A map $\nu:X\to\mathcal{P}_1(\mathbb{R})$ is
said to be a Young measure on $X$, if for each $\phi\in C_b(\mathbb{R})$, the map $z\in X\mapsto \nu_z(\phi)\in\mathbb{R}$ is measurable. Next, we say that a Young measure $\nu$ vanishes at infinity if, for each  $p\geq 1$, the following holds
\begin{eqnarray}\label{equ-26}
\int_X\int_{\mathbb{R}}|\xi|^pd\nu_z(\xi)d\lambda(z)<+\infty.
\end{eqnarray}

\end{dfn}
\begin{dfn}(Kinetic function)
Let $(X,\lambda)$ be a finite measure space. A measurable function $f: X\times \mathbb{R}\rightarrow [0,1]$
is called a kinetic function, if there exists a Young measure $\nu$ on $X$ that vanishes at infinity such that $\forall\xi\in \mathbb{R}$
\[
f(z,\xi)=\nu_z(\xi,+\infty)
\]
holds for $\lambda-a.e.$ $z\in X,$.
We say that $f$ is an equilibrium if there exists a measurable function $u: X\rightarrow \mathbb{R}$ such that $f(z,\xi)=I_{u(z)>\xi}$ a.e., or equivalently, $\nu_z=\delta_{u(z)}$ for $\lambda- a.e. \ z\in X$.
\end{dfn}
 Let $f: X\times \mathbb{R}\rightarrow [0,1]$ be a kinetic function, we use $\bar{f}$ to denote its conjugate function $\bar{f}:=1-f$.

\begin{dfn}(Generalized kinetic solution)\label{dfn-2}
Let $f_0:\Omega\times\mathbb{T}^N\times \mathbb{R}\rightarrow [0,1]$ be a kinetic function with
$(X,\lambda)=(\Omega\times\mathbb{T}^N,\mathbb{P}\otimes dx)$. A measurable function $f:\Omega\times\mathbb{T}^N\times[0,T]\times\mathbb{R}\rightarrow[0,1]$ is said to be a generalized kinetic solution to (\ref{P-19}) with initial datum $f_0$, if
\begin{description}
  \item[1.] $(f(t))_{t\in[0,T]}$ is predictable,
 \item[2.] $f$ is a kinetic function with $(X,\lambda)=(\Omega\times\mathbb{T}^N\times[0,T],\mathbb{P}\otimes dx\otimes dt)$ and for any $p\geq1$, there exists a constant $C_p>0$ such that $\nu:=-\partial_{\xi} f$ fulfills the following
\begin{eqnarray}
\mathbb{E}\left(\underset{0\leq t\leq T}{{\rm{ess\sup}}}\ \int_{\mathbb{T}^N}\int_{\mathbb{R}}|\xi|^pd\nu_{x,t}(\xi)dx\right)\leq C_p,
\end{eqnarray}
\item[3.] there exists a kinetic measure $m$ such that for all $\varphi\in C^1_c(\mathbb{T}^N\times [0,T)\times \mathbb{R})$,
\begin{eqnarray}\notag
&&\int^T_0\langle f(t), \partial_t \varphi(t)\rangle dt+\langle f_0, \varphi(0)\rangle +\int^T_0\langle f(t), a(\xi)\cdot \nabla \varphi (t)\rangle dt\\ \notag
&=& -\sum_{k\geq 1}\int^T_0\int_{\mathbb{T}^N}\int_{\mathbb{R}} g_k(x)\varphi (x,t,\xi)d\nu_{x,t}(\xi)dxd\beta_k(t) \\
\label{P-22}
&& -\frac{1}{2}\int^T_0\int_{\mathbb{T}^N}\int_{\mathbb{R}}\partial_{\xi}\varphi (x,t,\xi)G^2(x)d\nu_{x,t}(\xi)dxdt+ m(\partial_{\xi} \varphi),\  a.s..
\end{eqnarray}
\end{description}
\end{dfn}
Referring to \cite{D-V-1}, almost surely, any generalized solution admits possibly different left and right weak limits at any point $t\in[0,T]$. This property is important for establishing a comparison principle which allows to prove uniqueness. The following result is proved in \cite{D-V-1}.
\begin{prp}(Left and right weak limits)\label{prp-3} Let $f_0$ be a kinetic initial datum and $f$ be a generalized kinetic solution to (\ref{P-19}) with initial $f_0$. Then $f$ admits, almost surely, left and right limits respectively at every point $t\in [0,T]$. More precisely, for any  $t\in [0,T]$, there exist kinetic functions $f^{t\pm}$ on $\Omega\times \mathbb{T}^N\times \mathbb{R}$ such that $\mathbb{P}-$a.s.
\[
\langle f(t-\varepsilon),\varphi\rangle\rightarrow \langle f^{t-},\varphi\rangle
\]
and
\[
\langle f(t+\varepsilon),\varphi\rangle\rightarrow \langle f^{t+},\varphi\rangle
\]
as $\varepsilon\rightarrow 0$ for all $\varphi\in C^1_c(\mathbb{T}^N\times \mathbb{R})$. Moreover, almost surely,
\[
\langle f^{t+}-f^{t-}, \varphi\rangle=-\int_{\mathbb{T}^N\times[0,T]\times \mathbb{R}}\partial_{\xi}\varphi(x,\xi)I_{\{t\}}(s)dm(x,s,\xi).
\]
In particular, almost surely, the set of $t\in [0,T]$ fulfilling that $f^{t+}\neq f^{t-}$ is countable.
\end{prp}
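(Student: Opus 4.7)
The plan is to derive a pointwise-in-time integral identity for $t \mapsto \langle f(t),\varphi\rangle$ from the weak formulation (2.21), and then to read off existence of one-sided limits directly from the càdlàg structure of the resulting expression. For each $\varphi \in C^1_c(\mathbb{T}^N\times\mathbb{R})$ I would substitute the product test function $(x,s,\xi)\mapsto \varphi(x,\xi)\psi(s)$ into (2.21) with $\psi\in C^1_c([0,T))$, and then specialize $\psi$ to two families: $\psi_\varepsilon^-$ that equals $1$ on $[0,t-\varepsilon]$ and smoothly decreases to $0$ on $[t-\varepsilon,t]$, and $\psi_\varepsilon^+$ that equals $1$ on $[0,t]$ and smoothly decreases to $0$ on $[t,t+\varepsilon]$. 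On the right-hand side, dominated convergence controls the Lebesgue-in-$s$ terms, Itô isometry plus dominated convergence handles the stochastic integrals, and the integral against $dm$ of $\psi_\varepsilon^\mp \partial_\xi\varphi$ converges monotonically to the integral over $\mathbb{T}^N\times[0,t)\times\mathbb{R}$, respectively $\mathbb{T}^N\times[0,t]\times\mathbb{R}$. This forces the limits of $\int_0^T (\psi_\varepsilon^\pm)'(s)\langle f(s),\varphi\rangle\,ds$ to exist, and I would adopt these limits as the definitions of $-\langle f^{t-},\varphi\rangle$ and $-\langle f^{t+},\varphi\rangle$.

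The jump identity then drops out immediately by subtracting the two resulting formulas: all continuous-in-$t$ contributions cancel, and only the difference $\int_{[0,t]}-\int_{[0,t)}$ of the measure-term survives, giving exactly $-\int \partial_\xi\varphi(x,\xi)\,I_{\{t\}}(s)\,dm(x,s,\xi)$. The countability of jumps follows because, for each $R\in\mathbb{N}$, the bounded monotone map $t\mapsto m(\mathbb{T}^N\times[0,t]\times[-R,R])$ has at most countably many discontinuities (and $m$ vanishes at infinity by (2.13)), so outside a countable union of such times the jump integrand vanishes and $f^{t+}=f^{t-}$.

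Next I would upgrade these per-$\varphi$ constructions into genuine kinetic functions $f^{t\pm}$. Choose a countable $\mathcal{D}\subset C^1_c(\mathbb{T}^N\times\mathbb{R})$ that is dense in the $C^1$-topology on every fixed compact set and outside a single $\mathbb{P}$-null set; the construction above runs simultaneously for all $\varphi\in\mathcal{D}$ and all $t\in[0,T]$. Uniform continuity of $\varphi\mapsto -\langle f^{t\pm},\varphi\rangle$ in the $C^1_c$-topology (using $\|f\|_\infty\le 1$ and the finiteness of $m$ on compacts) extends the linear functional continuously to all of $C^1_c(\mathbb{T}^N\times\mathbb{R})$. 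Since the extension is the weak-$*$ limit of $f(t\pm\varepsilon)\in L^\infty$ along subsequences, and testing against nonnegative functions preserves the inequalities $0\le f\le 1$ and monotonicity in $\xi$, the limit is pointwise a.e.\ a $[0,1]$-valued, $\xi$-non-increasing function on $\mathbb{T}^N\times\mathbb{R}$. The vanishing-at-infinity requirement for the associated Young measures $\nu^{t\pm}:=-\partial_\xi f^{t\pm}$ follows from the uniform $L^p$-bound on $\nu$ in Definition 2.5 together with Fatou/lower semicontinuity.

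I expect the second paragraph above to contain the main difficulty: passing from an a.e.-defined weak limit along dense test functions to an honest kinetic function in the sense of Definition 2.4 requires careful handling of the exceptional $\mathbb{P}$-null set (a single one, uniform in $t$ and $\varphi$), and careful justification that the weak-$*$ limit really preserves the Young-measure structure and the vanishing-at-infinity condition rather than merely the pointwise bounds. The rest of the argument is routine once the integral identity in the first paragraph has been extracted.
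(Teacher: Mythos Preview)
The paper does not give its own proof of this proposition: it is stated as a result from \cite{D-V-1} (see the sentence immediately preceding the statement), so there is nothing to compare against in the present paper. Your outline is correct and is essentially the argument in \cite{D-V-1}: insert $\varphi(x,\xi)\psi(s)$ into the weak formulation, specialize $\psi$ to mollified indicators of $[0,t)$ and $[0,t]$, and read off that the right-hand side is the sum of an absolutely continuous part, a continuous martingale, and a bounded-variation part coming from $m$; one-sided limits, the jump formula, and countability of jumps follow at once.

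One small refinement: for the stochastic term, rather than ``It\^o isometry plus dominated convergence'' (which gives $L^2$ convergence and forces you to pass to subsequences for a.s.\ convergence), it is cleaner to observe that $t\mapsto \sum_k\int_0^t\int_{\mathbb{T}^N}\int_{\mathbb{R}} g_k\varphi\,d\nu_{x,s}\,dx\,d\beta_k(s)$ is almost surely a continuous function of $t$, so its left and right limits coincide and no subsequence is needed. Your identification of the delicate point---upgrading the per-$\varphi$ limits to a genuine kinetic function $f^{t\pm}$ via a countable dense family and weak-$*$ compactness---is exactly right, and is handled in \cite{D-V-1} in the way you describe.
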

For a generalized kinetic solution $f$, define $f^{\pm}$ by $f^{\pm}(t)=f^{t \pm}$, $t\in [0,T]$. Since we are dealing with the filtration associated to Brownian motion, both $f^{\pm}$ are  clearly predictable as well. Also $f=f^+=f^-$ almost everywhere in time and we can take any of them in an integral with respect to the Lebesgue measure or in a stochastic integral. However, if the integral is with respect to a measure--typically a kinetic measure in this article, the integral is not well defined for $f$ and may differ if one chooses either $f^+$ or $f^-$. In addition, referring to (22) in \cite{D-V-1},
the weak form (\ref{P-22}) satisfied by a generalized kinetic solution $f$ can be strengthened to be weak only respect to $x$ and $\xi$. Concretely, for all $t\in [0,T)$ and $\psi\in C^1_c(\mathbb{T}^N\times \mathbb{R})$,
 \begin{eqnarray}\notag
\langle f^+(t),\psi\rangle&=&\langle f_{0}, \psi\rangle+\int^t_0\langle f(s), a(\xi)\cdot \nabla \psi\rangle ds\\
\notag
&&+\sum_{k\geq 1}\int^t_0\int_{\mathbb{T}^N}\int_{\mathbb{R}}g_k(x,\xi)\psi(x,\xi)d\nu_{x,s}(\xi)dxd\beta_k(s)\\
\label{e-14}
&& +\frac{1}{2}\int^t_0\int_{\mathbb{T}^N}\int_{\mathbb{R}}\partial_{\xi}\psi(x,\xi)G^2(x,\xi)d\nu_{x,s}(\xi)dxds- \langle m,\partial_{\xi} \psi\rangle([0,t]), \quad a.s.,
\end{eqnarray}
 and set $f^+(T)=f(T)$.

At the end of this subsection, as a special example, let us consider the following
stochastic heat equation on $\mathbb{T}^N\times [0, T)$:

\begin{eqnarray}\label{qeq-1}
du-\Delta udt=\Phi(u)dW(t),\quad u(0)=u_0.
\end{eqnarray}
We aim to derive an explicit expression of its kinetic measure $m$. For this,
we have the following kinetic formulation

\begin{prp}\label{prppp-1}
Let $u_0\in L^{\infty}(\mathbb{T}^N)$ and $u$ be the solution to (\ref{qeq-1}). Then $f:=I_{u>\xi}$ satisfies the following

\begin{eqnarray}\notag
&&\int^T_0\langle f(t), \partial_t \varphi(t)\rangle dt+\langle f_0, \varphi(0)\rangle+\int^T_0\langle f(t),\Delta\varphi(t)\rangle dt\\ \notag
&=& -\sum_{k\geq 1}\int^T_0\int_{\mathbb{T}^N}\int_{\mathbb{R}}g_k(x,\xi)\varphi(x,t,\xi)d\nu_{x,t}(\xi)dxd\beta_k(t)\\
\label{qeq-2}
&&\ -\frac{1}{2}\int^T_0\int_{\mathbb{T}^N}\int_{\mathbb{R}}\partial_{\xi}\varphi(x,t,\xi)G^2(x,\xi)d\nu_{x,t}(\xi)dxdt+m(\partial_{\xi}\varphi), a.s.
\end{eqnarray}
for all $\varphi\in C^1_c(\mathbb{T}^N\times [0,T)\times \mathbb{R})$, where $f_0(\xi)=I_{u_0>\xi}$ and for all $\phi\in C_b(\mathbb{T}^N\times [0,T]\times \mathbb{R})$,
\begin{eqnarray*}
d\nu_{x,t}(\xi)=\delta_{u=\xi}d\xi, \quad m(\phi)=\int^T_0\int_{\mathbb{T}^N}\phi(x,t, u(x,t))|\nabla u|^2dxdt.
\end{eqnarray*}
\end{prp}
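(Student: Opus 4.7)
The plan is to reduce everything to classical Itô calculus applied to $u$ itself. Fix $\varphi\in C^1_c(\mathbb{T}^N\times [0,T)\times\mathbb{R})$ and set $F(x,t,\eta):=\int_{-\infty}^{\eta}\varphi(x,t,\xi)\,d\xi$, so that $\langle f(t),\varphi(t)\rangle=\int_{\mathbb{T}^N}F(x,t,u(x,t))\,dx$. Applying Itô's formula to $F(x,t,u(x,t))$ with $u$ solving (\ref{qeq-1}) and integrating over $\mathbb{T}^N$ produces four contributions: $\int_0^T\langle f(t),\partial_t\varphi\rangle\,dt$ (from $\partial_t F$), $\int_0^T\int_{\mathbb{T}^N}\varphi(x,t,u)\Delta u\,dx\,dt$ (from $\partial_\eta F\,\Delta u\,dt$), $\frac12\int_0^T\int_{\mathbb{T}^N}\partial_\xi\varphi(x,t,u)G^2(x,u)\,dx\,dt$ (from the quadratic variation), and the stochastic integral $\sum_k\int_0^T\int_{\mathbb{T}^N}\varphi(x,t,u)g_k(x,u)\,dx\,d\beta_k$. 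Using $\varphi(\cdot,T,\cdot)\equiv 0$ so that $\langle f(T),\varphi(T)\rangle=0$, and rewriting each pointwise evaluation at $\xi=u(x,t)$ as an integral against $d\nu_{x,t}(\xi)=\delta_{u(x,t)=\xi}\,d\xi$, the last three terms already sit in exactly the form required by (\ref{qeq-2}).

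The key algebraic step is to show that the second-order term splits as $\int_0^T\langle f(t),\Delta\varphi\rangle\,dt$ plus the kinetic-measure contribution $m(\partial_\xi\varphi)$. I would establish the pointwise (in $t$) identity
\begin{equation*}
\int_{\mathbb{T}^N}\varphi(x,t,u)\Delta u\,dx=\langle f(t),\Delta\varphi\rangle-\int_{\mathbb{T}^N}\partial_\xi\varphi(x,t,u)|\nabla u|^2\,dx
\end{equation*}
by expanding $\Delta_x[F(x,t,u(x,t))]$ via the chain rule into the four summands $\int_{-\infty}^u\Delta_x\varphi\,d\xi+2\nabla_x\varphi(x,t,u)\cdot\nabla u+\partial_\xi\varphi(x,t,u)|\nabla u|^2+\varphi(x,t,u)\Delta u$, integrating over the torus so that the left-hand side vanishes by the divergence theorem, and then applying the same trick to $\mathrm{div}_x\bigl(\int_{-\infty}^u\nabla_x\varphi\,d\xi\bigr)$ to identify the mixed term $\int_{\mathbb{T}^N}\nabla_x\varphi(x,t,u)\cdot\nabla u\,dx=-\langle f(t),\Delta\varphi\rangle$. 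Substituting back and integrating in time yields the displayed identity together with the explicit kinetic measure $m(\phi)=\int_0^T\int_{\mathbb{T}^N}\phi(x,t,u)|\nabla u|^2\,dx\,dt$.

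It then remains to check that $\nu$ and $m$ so constructed satisfy the structural requirements of a Young measure and a kinetic measure in the senses defined above: (\ref{equ-26}) for $\nu$ follows from the $L^p$ bounds on $u$, while the vanishing condition (\ref{equ-37}) for $m$ reduces to an integrability bound for $|\nabla u|^2\mathbf{1}_{\{|u|\ge R\}}$ that comes from the standard energy estimate for the stochastic heat equation combined with (\ref{equ-30}); predictability of $t\mapsto\int_0^t\int_{\mathbb{T}^N}\phi(x,u(x,s))|\nabla u|^2\,dx\,ds$ is inherited from that of $u$.

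The main obstacle I anticipate is the rigorous application of Itô's formula to the nonlinear functional $F(x,t,u(x,t))$ when $u$ is only a variational or mild solution of the parabolic SPDE, so that $\Delta u$ and $|\nabla u|^2$ are not classically defined pointwise. The cleanest route is an infinite-dimensional Itô formula of Krylov-Rozovskii type, using a Gelfand triple $H^1(\mathbb{T}^N)\hookrightarrow L^2(\mathbb{T}^N)\hookrightarrow H^{-1}(\mathbb{T}^N)$ in which $\Delta$ is coercive and $\Phi$ has the required growth by (\ref{equ-30}); alternatively one regularises $u$ by convolution or spectral truncation, applies Itô at the smooth level, and passes to the limit using the energy estimates. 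Once this step is justified, the manipulations above are purely algebraic integrations by parts on the torus.
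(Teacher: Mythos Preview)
Your proposal is correct and follows essentially the same route as the paper: apply It\^{o}'s formula to a primitive of the test function evaluated at $u$, and use the chain-rule identity $\theta'(u)\Delta u=\Delta\theta(u)-\theta''(u)|\nabla u|^2$ to split the Laplacian term into $\langle f,\Delta\varphi\rangle$ and the kinetic-measure contribution $|\nabla u|^2\delta_{u=\xi}$. The only cosmetic difference is that the paper first tests against functions $\theta'(\xi)$ depending on $\xi$ alone and then passes by density to tensor products $\alpha(x)\chi(\xi)$, whereas you work directly with the full $(x,t,\xi)$-dependent $\varphi$ and handle the resulting mixed term $\nabla_x\varphi(x,t,u)\cdot\nabla u$ via a second integration by parts; your version is slightly more explicit but the substance is identical.
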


{
\begin{proof}
By It\^{o} formula, we have for $\theta\in C^2(\mathbb{R})$ with polynomial growth at $\pm \infty$,
\begin{eqnarray*}
d(I_{u>\xi}, \theta')&=&d\int_{\mathbb{R}}I_{u>\xi}\theta'(\xi)d\xi=d\theta(u)\\
&=& \theta'(u)(\Delta udt+\Phi(u)dW(t))+\frac{1}{2}\theta''(u)G^2dt,
\end{eqnarray*}
where $G^2=\sum_{k\geq 1}|g_k|^2$.

The first term can be rewritten as
\begin{eqnarray*}
\theta'(u)\Delta u=\Delta \theta(u)-|\nabla u|^2\theta''(u)=\Delta (I_{u>\xi}, \theta')+(\partial_{\xi}(|\nabla u|^2\delta_{u=\xi}), \theta').
\end{eqnarray*}
Hence, we obtain the following kinetic formulation:
\begin{eqnarray*}
d(I_{u>\xi}, \theta')&=& \Delta (I_{u>\xi}, \theta')dt+(\partial_{\xi}(|\nabla u|^2\delta_{u=\xi}-\frac{1}{2}G^2\delta_{u=\xi}), \theta')dt\\
&&\ +\sum_{k\geq 1}(\delta_{u=\xi}g_k, \theta')d\beta_k.
\end{eqnarray*}
Taking $\theta(\xi)=\int^{\xi}_{-\infty}\chi$, we have
\begin{eqnarray*}
d(I_{u>\xi}, \chi)&=& \Delta (I_{u>\xi}, \chi)dt+(\partial_{\xi}(|\nabla u|^2\delta_{u=\xi}-\frac{1}{2}G^2\delta_{u=\xi}),\chi)dt\\
&&\ +\sum_{k\geq 1}(\delta_{u=\xi}g_k, \chi)d\beta_k.
\end{eqnarray*}
Since the test functions $\varphi(x,\xi)=\alpha(x)\chi(\xi)$ form a dense subset of $C^{\infty}_{c}(\mathbb{T}^N\times \mathbb{R})$, it follows that (\ref{qeq-2}) holds. We complete the proof.
\end{proof}}

{ From above, it is clear that the kinetic measure $m$ has an explicit expression
$$m=|\nabla u|^2\delta_{u=\xi}.$$
}

\subsection{Compactness results}\label{l-1}
Recall the following two compactness results from \cite{D-V-1}, which are important for establishing the existence of generalized kinetic solution of (\ref{P-19}).
\begin{thm}\label{thm-6}
(Compactness of Young measures)\quad Let $(X,\lambda)$ be a finite measure space. Let $(\nu^n)$ be a sequence of Young measures on $X$ satisfying the condition (\ref{equ-26}) for some $p\geq1$, namely,
\begin{eqnarray}\label{equ-19}
\sup_{n\in\mathbb{N}} \int_X\int_{\mathbb{R}}|\xi|^pd\nu^n_{z}(\xi)d\lambda(z)<+\infty.
\end{eqnarray}
Then there exists a Young measure $\nu$ on $X$ and a subsequence which is still denoted by $(v^n)$ such that, for $h\in L^1(X)$ and for $\phi\in C_b(\mathbb{R})$,
\begin{eqnarray}\label{equ-20}
\lim_{n\rightarrow \infty} \int_Xh(z)\int_{\mathbb{R}}\phi(\xi)d\nu^n_{z}(\xi)d\lambda(z)=\int_Xh(z)\int_{\mathbb{R}}\phi(\xi)d\nu_{z}(\xi)d\lambda(z).
\end{eqnarray}
\end{thm}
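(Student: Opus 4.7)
The plan is to realise each $\nu^n$ as a linear functional in a suitable dual space, extract a weak-$*$ convergent subsequence by Banach--Alaoglu, and then use the uniform moment bound (\ref{equ-19}) to rule out escape of mass to infinity. The main obstacle is exactly this last point: weak-$*$ compactness by itself only delivers a sub-probability-valued disintegration, and without the moment control the limit need not be a Young measure nor handle unbounded test functions like those in $C_b(\mathbb{R})$.

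First I would identify the natural functional-analytic setting. Since $(X,\lambda)$ is a finite measure space, $L^1(X;C_0(\mathbb{R}))$ is separable, and its dual is isometrically isomorphic to the space $L^\infty_{w*}(X;\mathcal{M}(\mathbb{R}))$ of weak-$*$ measurable, essentially bounded families of Radon measures on $\mathbb{R}$. Each $\nu^n$ defines an element of the unit ball of this dual via
\[
\Lambda^n(F) \;:=\; \int_X \int_{\mathbb{R}} F(z,\xi)\,d\nu^n_z(\xi)\,d\lambda(z), \qquad F \in L^1(X;C_0(\mathbb{R})).
\]
By Banach--Alaoglu there exist a subsequence (still denoted $\nu^n$) and a positive element $\nu \in L^\infty_{w*}(X;\mathcal{M}_+(\mathbb{R}))$ with $\|\nu_z\|_{\mathcal{M}}\le 1$ for $\lambda$-a.e.\ $z$, such that $\Lambda^n(F) \to \int_X \int_{\mathbb{R}} F(z,\xi)\,d\nu_z(\xi)\,d\lambda(z)$ for every $F \in L^1(X;C_0(\mathbb{R}))$. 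In particular (\ref{equ-20}) holds for test functions $\phi\in C_0(\mathbb{R})$ and $h\in L^1(X)$.

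Next I would use the moment bound to upgrade this convergence to $C_b(\mathbb{R})$ and to identify $\nu$ as a genuine Young measure. Set $M := \sup_n \int_X \int_{\mathbb{R}}|\xi|^p\,d\nu^n_z(\xi)\,d\lambda(z) < \infty$. For $R>0$ choose a cut-off $\chi_R\in C_c(\mathbb{R})$ with $0\le \chi_R\le 1$ and $\chi_R \equiv 1$ on $[-R,R]$. Then for $\phi\in C_b(\mathbb{R})$ and $h\in L^1(X)$,
\[
\Big| \int_X h(z) \int_{\mathbb{R}} \phi(1-\chi_R)\, d\nu^n_z\, d\lambda \Big| \;\le\; \|\phi\|_\infty \int_X |h(z)| \int_{|\xi|>R} d\nu^n_z(\xi)\,d\lambda(z) \;\le\; \frac{\|\phi\|_\infty \|h\|_\infty M}{R^p},
\]
uniformly in $n$, by Chebyshev's inequality applied on the set $\{|h|\le \|h\|_\infty\}$ (and, more carefully, by splitting $h = h\mathbf{1}_{\{|h|\le K\}} + h\mathbf{1}_{\{|h|>K\}}$ and using absolute continuity of $\int |h|\,d\lambda$ to handle general $h\in L^1$). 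The same bound holds for the limit $\nu$ by lower semicontinuity of $\int |\xi|^p\, d\nu_z$ under weak-$*$ convergence (Fatou). Since $\phi\chi_R \in C_0(\mathbb{R})$, we already have convergence on that part; letting $R\to\infty$ gives (\ref{equ-20}) for every $\phi\in C_b(\mathbb{R})$.

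Finally, taking $\phi\equiv 1$ (approximated by $\chi_R$) and arbitrary non-negative $h\in L^1(X)$ in the inequality above shows
\[
\int_X h(z)\,\nu_z(\mathbb{R})\,d\lambda(z) \;=\; \lim_{R\to\infty} \lim_n \int_X h(z) \int_{\mathbb{R}} \chi_R\, d\nu^n_z\, d\lambda \;=\; \int_X h(z)\,d\lambda(z),
\]
so $\nu_z(\mathbb{R}) = 1$ for $\lambda$-a.e.\ $z$, i.e.\ $\nu$ is $\mathcal{P}_1(\mathbb{R})$-valued. The weak-$*$ measurability of $z\mapsto \nu_z(\phi)$ for $\phi\in C_b(\mathbb{R})$ then makes $\nu$ a Young measure, while a further application of Fatou to $\int\int |\xi|^p d\nu^n_z d\lambda$ shows $\nu$ vanishes at infinity in the sense of (\ref{equ-26}). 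I expect the routine calculations to be the cut-off/Fatou estimates; the conceptual crux is that (\ref{equ-19}) simultaneously provides tightness (for extension to $C_b$) and conservation of mass (ensuring $\nu_z$ is a probability).
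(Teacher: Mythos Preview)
The paper does not prove this theorem; it is merely recalled from \cite{D-V-1} as a known compactness result. Your argument is the standard one and is essentially correct: embed the Young measures in the unit ball of $(L^1(X;C_0(\mathbb{R})))^* \cong L^\infty_{w*}(X;\mathcal{M}(\mathbb{R}))$, extract a weak-$*$ limit by Banach--Alaoglu, then use the uniform moment bound both to rule out loss of mass (so the limit is $\mathcal{P}_1(\mathbb{R})$-valued) and to upgrade convergence from $C_0(\mathbb{R})$ to $C_b(\mathbb{R})$ test functions.

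Two small points worth tightening. First, your displayed tail estimate uses $\|h\|_\infty$ for a generic $h\in L^1(X)$, which need not be finite; the parenthetical splitting $h = h\mathbf{1}_{\{|h|\le K\}} + h\mathbf{1}_{\{|h|>K\}}$ that you mention is the correct argument and should be promoted from an aside to the main line. Second, to get a \emph{subsequence} (not just a subnet) out of Banach--Alaoglu you need the predual $L^1(X;C_0(\mathbb{R}))$ to be separable, which amounts to $L^1(X,\lambda)$ being separable. That is not part of the stated hypotheses, but it is satisfied in every instance where the result is used in this paper (with $X$ built from $\mathbb{T}^N$, $[0,T]$, and a countably generated probability space), and is an implicit standing assumption in \cite{D-V-1} as well.
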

\begin{cor}\label{cor-2}
(Compactness of Kinetic functions)\quad Let $(X,\lambda)$ be a finite measure space. Let $(f_n)$ be a sequence of kinetic functions on $X\times \mathbb{R}$: $f_n(z,\xi)=\nu^n_z(\xi,\infty)$,  where $\nu^n, n\ge1$, are Young measures on $X$ satisfying (\ref{equ-19}). Then there exists a kinetic function $f$ on $X\times\mathbb{R}$ such that $f_n\rightharpoonup f$ in $L^{\infty}(X\times \mathbb{R})-$ weak $*$, as $n\to\infty$.
\end{cor}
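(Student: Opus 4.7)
The strategy is to reduce the statement about kinetic functions to Theorem \ref{thm-6} about Young measures. First I would apply Theorem \ref{thm-6} to the given sequence $(\nu^n)$, which by hypothesis satisfies the tightness bound (\ref{equ-19}). This produces a subsequence (still indexed by $n$) and a Young measure $\nu$ on $X$ such that (\ref{equ-20}) holds for every $h \in L^1(X)$ and $\phi \in C_b(\mathbb{R})$. I would then define the candidate limit
\[
f(z,\xi) := \nu_z(\xi, +\infty),
\]
which is a measurable $[0,1]$-valued function on $X \times \mathbb{R}$ by Fubini applied to the measurability of $z \mapsto \nu_z$.

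Next I would check that $\nu$ itself vanishes at infinity, i.e.\ satisfies (\ref{equ-26}). For this I would test (\ref{equ-20}) with the truncated moments $\phi_M(\xi) = |\xi|^p \wedge M \in C_b(\mathbb{R})$ and the choice $h \equiv 1 \in L^1(X)$ (legitimate since $\lambda$ is finite). Passing to the limit in $n$ gives
\[
\int_X \int_{\mathbb{R}} (|\xi|^p \wedge M)\, d\nu_z(\xi)\, d\lambda(z) \leq \sup_n \int_X \int_{\mathbb{R}} |\xi|^p\, d\nu^n_z(\xi)\, d\lambda(z) < \infty,
\]
and monotone convergence as $M \to \infty$ yields the required bound. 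Hence $\nu$ is a Young measure vanishing at infinity, and so $f$ is indeed a kinetic function.

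The main content is the weak-$*$ convergence $f_n \rightharpoonup f$ in $L^\infty(X \times \mathbb{R})$. Here I would argue on a dense subclass first. Given tensor test functions $\psi(z,\xi) = h(z)g(\xi)$ with $h \in L^1(X)$ and $g \in L^1(\mathbb{R})$, Fubini's theorem rewrites the pairing as
\[
\int_X \int_{\mathbb{R}} f_n(z,\xi) h(z) g(\xi)\, d\xi\, d\lambda(z) = \int_X h(z) \int_{\mathbb{R}} G(\eta)\, d\nu^n_z(\eta)\, d\lambda(z),
\]
where $G(\eta) := \int_{-\infty}^{\eta} g(\xi)\, d\xi$. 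Since $g \in L^1(\mathbb{R})$, the primitive $G$ lies in $C_b(\mathbb{R})$, so (\ref{equ-20}) applies and the right-hand side converges to the analogous expression with $\nu$ in place of $\nu^n$; reversing the Fubini step identifies the limit as $\int_X \int_{\mathbb{R}} f(z,\xi) h(z) g(\xi)\, d\xi\, d\lambda(z)$.

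Finally I would extend the convergence from this dense subclass to all of $L^1(X \times \mathbb{R})$: the linear span of such tensor products is dense in $L^1(X \times \mathbb{R})$, and the sequence $(f_n)$ is uniformly bounded by $1$ in $L^\infty$, so an $\varepsilon/3$ approximation argument upgrades the pointwise convergence of dual pairings to weak-$*$ convergence on the whole space. The main obstacle I anticipate is the mismatch between the tensor-product test functions naturally produced by Theorem \ref{thm-6} and the general $L^1(X\times \mathbb{R})$ test functions required for weak-$*$ convergence; the uniform bound $\|f_n\|_{L^\infty} \leq 1$ is what makes this density argument work cleanly.
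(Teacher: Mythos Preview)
Your proof is correct. Note, however, that the paper does not actually prove this corollary: both Theorem~\ref{thm-6} and Corollary~\ref{cor-2} are simply recalled from \cite{D-V-1} in Section~\ref{l-1} without argument, so there is no in-paper proof to compare against. Your reduction to Theorem~\ref{thm-6} via the primitive $G(\eta)=\int_{-\infty}^{\eta}g(\xi)\,d\xi\in C_b(\mathbb{R})$, followed by the density/uniform-boundedness argument, is exactly the standard route (and is essentially how the result is proved in \cite{D-V-1}).
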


\subsection{Global well-posedness of (\ref{P-19})}
The following result was shown in \cite{D-V-1}.
\begin{thm}\label{thm-4}
(Existence, Uniqueness) Let $u_0\in L^{\infty}(\mathbb{T}^N)$. Assume Hypothesis H holds. Then there is a unique kinetic solution $u$ to equation (\ref{P-19}) with initial datum $u_0$. Besides, if $f$ is a generalized kinetic solution to (\ref{P-19}) with initial datum $I_{u_0>\xi}$, then there exist $u^+$ and $u^{-}$, representatives of $u$
 such that for all $t\in[0,T]$, $f^{\pm}(x,t,\xi)=I_{u^{\pm}(x,t)>\xi}\ a.s.$ for $a.e.\  (x,t,\xi)$.
\end{thm}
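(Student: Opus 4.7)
The plan is to follow the strategy of Debussche--Vovelle, splitting the argument into uniqueness (by doubling of variables) and existence (by vanishing viscosity and the compactness results of Section~\ref{l-1}).

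For uniqueness, I would first prove a reduction/contraction estimate at the level of generalized kinetic solutions. Take two generalized kinetic solutions $f_1,f_2$ with initial data $f_{1,0},f_{2,0}$ and Young measures $\nu_{1},\nu_{2}$, kinetic measures $m_1,m_2$. Apply the test function $\varphi(x,\xi)=\psi_\delta(x-y)\rho_\varepsilon(\xi-\zeta)$ in the strengthened weak formulation \eqref{e-14} for $f_1^+$ in the $(x,\xi)$ variables and for $\bar f_2^{\,+}=1-f_2^+$ in the $(y,\zeta)$ variables, and compute the It\^o differential of the product $\langle f_1^+(t)\bar f_2^{\,+}(t),\psi_\delta\rho_\varepsilon\rangle$. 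The flux terms produce a symmetric bracket that vanishes in the regularization limit by the continuity of $a(\cdot)$; the It\^o correction and the quadratic variation from the two stochastic integrals combine with Hypothesis H (in particular \eqref{equ-30-1} and the Lipschitz bound \eqref{equ-29}) to yield a manageable Gronwall-type term; the kinetic-measure terms produce a non-positive contribution because integration against $\partial_\xi\rho_\varepsilon$ and the sign of $m_1,m_2$ conspire in favor of dissipation. Sending $\delta,\varepsilon\to0$ gives the key inequality
\begin{equation*}
\mathbb{E}\int_{\mathbb{T}^N}\!\!\int_{\mathbb{R}} f_1^{+}(x,t,\xi)\,\bar f_2^{\,+}(x,t,\xi)\,d\xi\,dx
\le \mathbb{E}\int_{\mathbb{T}^N}\!\!\int_{\mathbb{R}} f_{1,0}\bar f_{2,0}\,d\xi\,dx + C\!\int_0^t\!\!\cdots
\end{equation*}
and the symmetric one with roles exchanged. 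Adding and applying Gronwall's lemma gives an $L^1$ contraction. Taking $f_1=f_2$ with a common initial datum forces $\nu_{x,t}$ to be an a.s.\ Dirac mass $\delta_{u^\pm(x,t)}$, so $f^\pm=I_{u^\pm>\xi}$ and uniqueness of kinetic solutions follows; simultaneously this produces the representatives $u^+$, $u^-$ asserted in the theorem, using Proposition~\ref{prp-3}.

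For existence, I would regularize by adding a viscosity term,
\begin{equation*}
du^{\varepsilon}+\mathrm{div}(A(u^{\varepsilon}))\,dt-\varepsilon\Delta u^{\varepsilon}\,dt=\Phi(u^{\varepsilon})\,dW(t),\qquad u^{\varepsilon}(0)=u_0,
\end{equation*}
for which classical parabolic SPDE theory (under Hypothesis H) yields a unique solution $u^{\varepsilon}\in L^p(\Omega;L^\infty(0,T;L^p(\mathbb{T}^N)))$ for all $p\ge1$, with bounds uniform in $\varepsilon$ obtained by It\^o's formula applied to $|u^{\varepsilon}|^p$ together with \eqref{equ-30}. Proposition~\ref{prppp-1}, adapted to include the flux term, yields a kinetic formulation for $f^{\varepsilon}:=I_{u^\varepsilon>\xi}$ with kinetic measure $m^{\varepsilon}=\varepsilon|\nabla u^{\varepsilon}|^2\delta_{u^\varepsilon=\xi}$. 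A standard energy estimate yields $\mathbb{E}\,m^{\varepsilon}(\mathbb{T}^N\times[0,T]\times\mathbb{R})\le C$ uniformly in $\varepsilon$, plus the decay \eqref{equ-37} uniformly via the $L^p$ bound.

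Next I invoke the compactness results. Writing $f^{\varepsilon}=\nu^{\varepsilon}(\xi,\infty)$ with $\nu^{\varepsilon}=\delta_{u^\varepsilon=\xi}$, the uniform $L^p$ estimate of $u^{\varepsilon}$ gives \eqref{equ-19}. Theorem~\ref{thm-6} and Corollary~\ref{cor-2} then provide a Young measure $\nu$ and a kinetic function $f$ with $f^{\varepsilon_n}\rightharpoonup f$ in $L^\infty(\Omega\times\mathbb{T}^N\times[0,T]\times\mathbb{R})$ weak$-\ast$ along a subsequence. The uniform bound on $m^{\varepsilon}$ allows to extract (after possibly enlarging the probability space and applying Skorokhod-type arguments, or working directly by weak-$\ast$ compactness of measures) a limiting kinetic measure $m$. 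Passing to the limit in every term of the kinetic formulation is now routine: linear terms pass by weak convergence, the stochastic integral passes because $\Phi$ depends on $u^\varepsilon$ only through the Young measure pairing (combined with Lipschitz/linear-growth \eqref{equ-30}--\eqref{equ-30-1}), and the viscous contribution $\varepsilon\Delta$ vanishes since $\varepsilon\to0$. Predictability of $f$ is preserved in the limit. Hence $f$ is a generalized kinetic solution with initial datum $I_{u_0>\xi}$.

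Finally I combine the two parts. Uniqueness applied to this $f$ forces $\nu_{x,t}=\delta_{u^\pm(x,t)}$, producing a kinetic solution $u$ (with representatives $u^{\pm}$) in the sense of Definition~\ref{dfn-1}, and simultaneously yielding the identification $f^{\pm}(x,t,\xi)=I_{u^{\pm}(x,t)>\xi}$ claimed in the statement. The hardest step, and the one that requires the most care, is the doubling-of-variables computation: one must track precisely the dissipation coming from the kinetic measures, the It\^o correction arising from Hypothesis~H, and the flux commutators when $a$ has polynomial growth \eqref{qeq-22}, to make sure the limit $\delta,\varepsilon\to 0$ produces a closed Gronwall inequality rather than divergent terms. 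The remainder of the argument is essentially a matter of uniform a priori bounds and application of the compactness toolbox of Section~\ref{l-1}.
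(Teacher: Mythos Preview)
The paper does not actually prove Theorem~\ref{thm-4}: it simply records the statement and attributes the result to Debussche--Vovelle \cite{D-V-1}, with no argument given. Your proposal is therefore not competing against a proof in the paper but against the cited reference, and what you have sketched is precisely the Debussche--Vovelle strategy: doubling of variables for the contraction/reduction, vanishing viscosity plus the Young-measure compactness of Section~\ref{l-1} for existence, followed by the reduction of the limiting generalized kinetic solution to an equilibrium. In that sense your outline is faithful to the intended approach and broadly correct.

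Two small points where your sketch is slightly imprecise relative to \cite{D-V-1}. First, in the stochastic doubling-of-variables computation the martingale contributions from the two It\^o integrals have mean zero, and the It\^o correction $\tfrac12 G^2\partial_\xi$ combines with the quadratic variation so that the resulting term is controlled directly by \eqref{equ-29}; one does not literally invoke \eqref{equ-30-1} at the operator level but rather the pointwise bound on $\sum_k|g_k(x,\xi)-g_k(y,\zeta)|^2$. Second, the reduction step (showing $\nu_{x,t}$ is a Dirac mass) uses the identity $f(1-f)\ge 0$ with equality only at equilibria, applied after taking $f_1=f_2=f$ in the contraction estimate with the same initial datum; you allude to this but it is worth stating that this is where Proposition~\ref{prp-3} and the $\{0,1\}$-valuedness enter. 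With these adjustments, your plan matches the argument the paper defers to \cite{D-V-1}.
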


\begin{remark}
The kinetic solution $u$ is a strong solution in the probabilistic sense.
\end{remark}

\section{Freidlin-Wentzell large deviations and statement of the main result}
We start with a brief account of notions of large deviations.
Let $\{X^\varepsilon\}_{\varepsilon>0}$ be a family of random variables defined on a given probability space $(\Omega, \mathcal{F}, \mathbb{P})$ taking values in some Polish space $\mathcal{E}$.
\begin{dfn}
(Rate function) A function $I: \mathcal{E}\rightarrow [0,\infty]$ is called a rate function if $I$ is lower semicontinuous. A rate function $I$ is called a good rate function if the level set $\{x\in \mathcal{E}: I(x)\leq M\}$ is compact for each $M<\infty$.
\end{dfn}
\begin{dfn}
(Large deviation principle) The sequence $\{X^\varepsilon\}$ is said to satisfy the large deviation principle with rate function $I$ if for each Borel subset $A$ of $\mathcal{E}$
      \[
      -\inf_{x\in A^o}I(x)\leq \lim \inf_{\varepsilon\rightarrow 0}\varepsilon \log \mathbb{P}(X^\varepsilon\in A)\leq \lim \sup_{\varepsilon\rightarrow 0}\varepsilon \log \mathbb{P}(X^\varepsilon\in A)\leq -\inf_{x\in \bar{A}}I(x),
      \]
      where $A^o$ and $\bar{A}$ denote the interior and closure of $A$ in $\mathcal{E}$, respectively.
\end{dfn}

Suppose $W(t)$ is a cylindrical Wiener process on a Hilbert space $U$ defined on a filtered probability space $(\Omega, \mathcal{F},\{\mathcal{F}_t\}_{t\in [0,T]}, \mathbb{P} )$ ( that is, the paths of $W$ take values in $C([0,T];\mathcal{U})$, where $\mathcal{U}$ is another Hilbert space such that the embedding $U\subset \mathcal{U}$ is Hilbert-Schmidt).
Now we define
\begin{eqnarray*}
&\mathcal{A}:=\{\phi: \phi\ is\ a\ U\text{-}valued\ \{\mathcal{F}_t\}\text{-}predictable\ process\ such\ that \ \int^T_0 |\phi(s)|^2_Uds<\infty\ \mathbb{P}\text{-}a.s.\};\\
&S_M:=\{ h\in L^2([0,T];U): \int^T_0 |h(s)|^2_Uds\leq M\};\\
&\mathcal{A}_M:=\{\phi\in \mathcal{A}: \phi(\omega)\in S_M,\ \mathbb{P}\text{-}a.s.\}.
\end{eqnarray*}
Here and in the sequel {of} this paper, we will always refer to the weak topology on the set $S_M$.

Suppose for each $\varepsilon>0, \mathcal{G}^{\varepsilon}: C([0,T];\mathcal{U})\rightarrow \mathcal{E}$ is a measurable map and let $X^{\varepsilon}:=\mathcal{G}^{\varepsilon}(W)$. Now, we list below sufficient conditions for the large deviation principle of the sequence $X^{\varepsilon}$ as $\varepsilon\rightarrow 0$.
\begin{description}
  \item[\textbf{Condition A} ] There exists a measurable map $\mathcal{G}^0: C([0,T];\mathcal{U})\rightarrow \mathcal{E}$ such that the following conditions hold
\end{description}
\begin{description}
  \item[(a)] For every $M<\infty$, let $\{h^{\varepsilon}: \varepsilon>0\}$ $\subset \mathcal{A}_M$. If $h_{\varepsilon}$ converges to $h$ as $S_M$-valued random elements in distribution, then $\mathcal{G}^{\varepsilon}(W(\cdot)+\frac{1}{\sqrt{\varepsilon}}\int^{\cdot}_{0}h^\varepsilon(s)ds)$ converges in distribution to $\mathcal{G}^0(\int^{\cdot}_{0}h(s)ds)$.
  \item[(b)] For every $M<\infty$, the set $K_M=\{\mathcal{G}^0(\int^{\cdot}_{0}h(s)ds): h\in S_M\}$ is a compact subset of $\mathcal{E}$.
\end{description}

The following result is due to Budhiraja et al. in \cite{BD}.
\begin{thm}\label{thm-7}
If $\{\mathcal{G}^{\varepsilon}\}$ satisfies {condition A}, then $X^{\varepsilon}$ satisfies the large deviation principle on $\mathcal{E}$ with the
following good rate function $I$ defined by
\begin{eqnarray}\label{equ-27-1}
I(f)=\inf_{\{h\in L^2([0,T];U): f= \mathcal{G}^0(\int^{\cdot}_{0}h(s)ds)\}}\Big\{\frac{1}{2}\int^T_0|h(s)|^2_{U}ds\Big\},\ \ \forall f\in\mathcal{E}.
\end{eqnarray}
By convention, $I(f)=\infty$, if  $\Big\{h\in L^2([0,T];U): f= \mathcal{G}^0(\int^{\cdot}_{0}h(s)ds)\Big\}=\emptyset.$
\end{thm}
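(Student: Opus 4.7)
The plan is to reduce the LDP to the Laplace principle and then exploit the Bou\'e--Dupuis variational representation together with Condition A. Since $\mathcal{E}$ is Polish and $I$ will turn out to be a good rate function, by the Varadhan--Bryc equivalence it suffices to establish, for every bounded continuous $f:\mathcal{E}\to\mathbb{R}$, the Laplace principle
\begin{equation*}
-\lim_{\varepsilon\to 0}\varepsilon\log \mathbb{E}\exp\Big(-\tfrac{1}{\varepsilon}f(X^{\varepsilon})\Big)
=\inf_{x\in\mathcal{E}}\big\{f(x)+I(x)\big\}.
\end{equation*}
The starting point is the Bou\'e--Dupuis formula, which rewrites the left-hand side as
\begin{equation*}
-\varepsilon\log \mathbb{E}\exp\Big(-\tfrac{1}{\varepsilon}f(X^{\varepsilon})\Big)
=\inf_{\phi\in\mathcal{A}}\mathbb{E}\!\left[\tfrac{1}{2}\int_0^T|\phi(s)|_U^2\,ds
+f\!\Big(\mathcal{G}^{\varepsilon}\big(W(\cdot)+\tfrac{1}{\sqrt{\varepsilon}}\!\int_0^{\cdot}\phi(s)\,ds\big)\Big)\right].
\end{equation*}
So the task is to show that the right-hand side converges, as $\varepsilon\to 0$, to $\inf_{x}\{f(x)+I(x)\}$.

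First I would verify that $I$ is a good rate function. Lower semicontinuity follows because $S_M$ is weakly compact in $L^2([0,T];U)$ and $I(f)\le M$ means $f\in K_M$ with an approximate control, so level sets $\{I\le M\}\subseteq K_{2M}$ are contained in a compact set by Condition A(b); closedness of $\{I\le M\}$ then follows from the same continuity/weak-compactness argument applied to minimising sequences. For the Laplace \emph{lower bound} (i.e.\ the $\le$ direction for $-\lim\varepsilon\log\mathbb{E}\exp(-f/\varepsilon)/$ equivalently the $\ge$ direction for the original expression), I plug each deterministic $h\in S_M$ into the representation as $\phi\equiv h$; Condition A(a) then yields convergence in distribution of $\mathcal{G}^{\varepsilon}(W+\tfrac{1}{\sqrt{\varepsilon}}\int_0^\cdot h\,ds)$ to $\mathcal{G}^0(\int_0^\cdot h\,ds)$, and bounded continuity of $f$ passes to the limit to give
\begin{equation*}
\limsup_{\varepsilon\to 0}\Big[-\varepsilon\log \mathbb{E}\exp\big(-\tfrac{1}{\varepsilon}f(X^{\varepsilon})\big)\Big]
\le \tfrac{1}{2}\int_0^T|h(s)|_U^2\,ds+f\Big(\mathcal{G}^0\!\Big(\int_0^\cdot h(s)\,ds\Big)\Big).
\end{equation*}
Taking infimum over all $h\in L^2([0,T];U)$ produces the bound $\inf_{x}\{f(x)+I(x)\}$.

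For the matching Laplace \emph{upper bound}, I would start with an $\varepsilon$-near-minimiser $\phi^{\varepsilon}\in\mathcal{A}$ of the variational formula; since $f$ is bounded one may truncate and assume $\phi^{\varepsilon}\in\mathcal{A}_M$ for some $M$ depending only on $\|f\|_\infty$. Weak compactness of $S_M$ lets me extract, along a subsequence, a limit $h$ in distribution on $S_M$; Condition A(a) then gives
$\mathcal{G}^{\varepsilon}(W+\tfrac{1}{\sqrt{\varepsilon}}\int_0^\cdot \phi^{\varepsilon}\,ds)\Rightarrow \mathcal{G}^0(\int_0^\cdot h\,ds)$.
Combining Fatou's lemma applied to the (weakly lower semicontinuous) quadratic cost $\tfrac{1}{2}\int_0^T|\cdot|_U^2\,ds$ with bounded continuity of $f$ yields
\begin{equation*}
\liminf_{\varepsilon\to 0}\Big[-\varepsilon\log\mathbb{E}\exp\big(-\tfrac{1}{\varepsilon}f(X^{\varepsilon})\big)\Big]
\ge \mathbb{E}\Big[\tfrac{1}{2}\!\int_0^T|h(s)|_U^2\,ds+f\Big(\mathcal{G}^0\!\Big(\int_0^\cdot h(s)\,ds\Big)\Big)\Big]\ge \inf_{x\in\mathcal{E}}\{f(x)+I(x)\}.
\end{equation*}
Matching the two bounds gives the Laplace principle, hence the LDP, with rate function $I$.

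The main obstacles are technical rather than conceptual: (i) justifying the truncation $\phi^{\varepsilon}\in\mathcal{A}_M$ in the upper bound, which requires a careful bound on the near-optimal control via $\|f\|_\infty$; (ii) handling the joint weak convergence of $(\phi^{\varepsilon},\mathcal{G}^{\varepsilon}(W+\tfrac{1}{\sqrt{\varepsilon}}\int_0^\cdot\phi^{\varepsilon}))$ rather than marginal convergence, which is exactly what Condition A(a) is engineered to deliver once one invokes the Skorokhod/Jakubowski representation; and (iii) the lower semicontinuity of $I$, where one must show that a weak $L^2$-limit of controls realising the same $\mathcal{G}^0$-image gives a genuine minimiser. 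All three are standard in the Budhiraja--Dupuis machinery and do not require anything beyond what is already assumed in the statement.
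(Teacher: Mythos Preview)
The paper does not actually prove this theorem; it merely states it and attributes it to Budhiraja and Dupuis \cite{BD} with the sentence ``The following result is due to Budhiraja et al.\ in \cite{BD}.'' So there is no in-paper proof to compare against. Your sketch is the standard Budhiraja--Dupuis argument (Laplace principle via the Bou\'e--Dupuis variational formula, near-optimal controls truncated to $\mathcal{A}_M$, weak compactness of $S_M$, and passage to the limit using Condition A), and it is essentially correct as an outline of how the cited reference establishes the result.
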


Recently, a new sufficient condition (Condition B below) to verify the assumptions in {condition A} (hence the large deviation principle) is proposed by Matoussi, Sabagh and Zhang in \cite{MSZ}. It turns out this new sufficient condition is suitable for establishing the large deviation principle for the scalar conservation laws.
\begin{description}
  \item[\textbf{Condition B} ] There exists a measurable map $\mathcal{G}^0: C([0,T];\mathcal{U})\rightarrow \mathcal{E}$ such that the following two items hold
\end{description}
\begin{description}
  \item[(i)] For every $M<+\infty$, and for any family $\{h^{\varepsilon}; \varepsilon>0\}$ $\subset \mathcal{A}_M$ and any $\delta>0$,
      \[
      \lim_{\varepsilon\rightarrow 0}\mathbb{P}\Big(\rho(Y^\varepsilon, Z^\varepsilon)>\delta\Big)=0,
      \]
     where $Y^\varepsilon:=\mathcal{G}^{\varepsilon}\left(W(\cdot)+\frac{1}{\sqrt{\varepsilon}}\int^{\cdot}_{0}h^\varepsilon(s)ds\right)$, $Z^\varepsilon:=\mathcal{G}^0\left(\int^{\cdot}_{0}h^\varepsilon(s)ds\right)$,
     and $\rho(\cdot,\cdot)$ stands for the metric in the space $\mathcal{E}$.
  \item[(ii)] For every $M<+\infty$ and any family $\{h^\varepsilon; \varepsilon>0\}\subset S_M$ that converges to some element $h$ as $\varepsilon\rightarrow 0$,
      $\mathcal{G}^0\left(\int^{\cdot}_{0}h^\varepsilon(s)ds\right)$ converges to $\mathcal{G}^0\left(\int^{\cdot}_{0}h(s)ds\right)$ in the space $\mathcal{E}$.
\end{description}

\subsection{Statement of the main result}
In this paper, we are concerned with the following stochastic conservation law driven by small multiplicative noise
\begin{eqnarray}\label{P-1}
\left\{
  \begin{array}{ll}
  du^\varepsilon+div(A(u^\varepsilon))dt=\sqrt{\varepsilon}\Phi(u^\varepsilon) dW(t),\\
u^\varepsilon(0)=u_0,
  \end{array}
\right.
\end{eqnarray}
for $\varepsilon>0$, where $u_0\in L^{\infty}(\mathbb{T}^N)$. Under Hypothesis H, by Theorem \ref{thm-4}, there exists a unique kinetic solution $u^\varepsilon\in L^1([0,T]; L^1(\mathbb{T}^N))$  a.s..
Therefore, there exists a Borel-measurable function
\[
\mathcal{G}^{\varepsilon}: C([0,T];\mathcal{U})\rightarrow L^1([0,T];L^1(\mathbb{T}^N))
\]
such that $u^{\varepsilon}(\cdot)=\mathcal{G}^{\varepsilon}(W(\cdot))$.

Let $h\in L^2([0,T];U)$, we consider the following skeleton equation
\begin{eqnarray}\label{P-2}
\left\{
  \begin{array}{ll}
    du_h+div(A(u_h))dt=\Phi(u_h) h(t)dt,\\
    u_h(0)=u_0.
  \end{array}
\right.
\end{eqnarray}

The solution $u_h$, whose existence will be proved in next section, defines a measurable mapping $\mathcal{G}^0: C([0,T];\mathcal{U})\rightarrow L^1([0,T];L^1(\mathbb{T}^N))$ so that  $\mathcal{G}^0\Big(\int^{\cdot}_0 h(s)ds\Big):=u_h(\cdot)$.

\smallskip

We are now ready to state our main result of this paper

\begin{thm}\label{thm-3}
Let $u_0\in L^{\infty}(\mathbb{T}^N)$. Assume Hypothesis H holds. Then $u^{\varepsilon}$ satisfies the large deviation principle on $L^1([0,T];L^1(\mathbb{T}^N))$ with the good rate function $I$ given by (\ref{equ-27-1}).
\end{thm}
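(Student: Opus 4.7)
The plan is to verify Condition B for the maps $\mathcal{G}^{\varepsilon}, \mathcal{G}^0$ on the Polish space $\mathcal{E}=L^1([0,T];L^1(\mathbb{T}^N))$, so that by the Matoussi--Sabbagh--Zhang extension of Theorem \ref{thm-7} the desired LDP with rate function (\ref{equ-27-1}) follows. Fix $M<\infty$ and $h^{\varepsilon}\in\mathcal{A}_M$. By Girsanov's theorem, $Y^{\varepsilon}:=\mathcal{G}^{\varepsilon}(W(\cdot)+\frac{1}{\sqrt{\varepsilon}}\int_0^{\cdot}h^{\varepsilon}(s)ds)$ is the unique kinetic solution of the controlled SPDE
\[
du+\mathrm{div}(A(u))dt=\sqrt{\varepsilon}\,\Phi(u)dW(t)+\Phi(u)h^{\varepsilon}(t)dt,\qquad u(0)=u_0,
\]
while $Z^{\varepsilon}:=\mathcal{G}^0(\int_0^{\cdot}h^{\varepsilon}(s)ds)=u_{h^{\varepsilon}}$ solves the skeleton equation (\ref{P-2}) with control $h^{\varepsilon}$. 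Both are globally well-posed with uniform $L^p$ bounds in $\varepsilon$ and in $h^{\varepsilon}\in S_M$, by Theorem \ref{thm-4} and the well-posedness theory for the skeleton equation developed in Section 4.

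For item (i), I would compare $Y^{\varepsilon}$ and $Z^{\varepsilon}$ via the kinetic doubling-of-variables method, adapting the uniqueness argument of \cite{D-V-1}. Writing $f^{\varepsilon}=I_{Y^{\varepsilon}>\xi}$ and $g^{\varepsilon}=I_{Z^{\varepsilon}>\xi}$ and applying It\^{o}'s formula to a mollification $\int f^{\varepsilon}(x,t,\xi)\bar{g}^{\varepsilon}(y,t,\zeta)\rho_{\delta}(x-y)\psi_{\eta}(\xi-\zeta)dxdyd\xi d\zeta$, then passing to the limit $\delta,\eta\to 0$ (absorbing the nonnegative kinetic-measure contributions and using (\ref{qeq-22})), one obtains
\[
\mathbb{E}\|Y^{\varepsilon}(t)-Z^{\varepsilon}(t)\|_{L^1}\le C\varepsilon\,t\bigl(1+\sup_{s}\mathbb{E}\|Y^{\varepsilon}(s)\|_H^2\bigr)+C\int_0^t\bigl(\mathbb{E}\|Y^{\varepsilon}-Z^{\varepsilon}\|_{L^2}^2\bigr)^{1/2}\|h^{\varepsilon}(s)\|_U ds.
\]
The $\varepsilon$-term comes from the It\^{o} correction $\tfrac12\varepsilon\sum_k g_k^2$, while the martingale $\sqrt{\varepsilon}\Phi(Y^{\varepsilon})dW$ has mean zero. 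Invoking (\ref{equ-30-1}) and Gronwall's lemma together with $\int_0^T\|h^{\varepsilon}\|_U^2 ds\le M$ yields $\mathbb{E}\|Y^{\varepsilon}-Z^{\varepsilon}\|_{L^1([0,T];L^1)}\to 0$, whence convergence in probability.

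For item (ii), suppose $h^{\varepsilon}\to h$ weakly in $S_M$. Applying the same doubling technique to the two skeleton solutions $u_{h^{\varepsilon}}$ and $u_h$ gives
\[
\|u_{h^{\varepsilon}}(t)-u_h(t)\|_{L^1}\le \int_0^t\!\!\int_{\mathbb{T}^N}\mathrm{sgn}(u_{h^{\varepsilon}}-u_h)\bigl[\Phi(u_{h^{\varepsilon}})-\Phi(u_h)\bigr]h^{\varepsilon}dxds+\int_0^t\!\!\int_{\mathbb{T}^N}\mathrm{sgn}(u_{h^{\varepsilon}}-u_h)\Phi(u_h)(h^{\varepsilon}-h)dxds.
\]
The first integral is handled by the Lipschitz bound (\ref{equ-30-1}) and Gronwall. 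The second, which is linear in $h^{\varepsilon}-h$, is the delicate one and is the main obstacle of the proof: weak convergence in $L^2([0,T];U)$ alone is not enough without regularity of $\mathrm{sgn}(u_{h^{\varepsilon}}-u_h)\Phi(u_h)$ as a map into $\mathcal{L}_2(U,L^1)$. The plan is to extract strong $L^1([0,T];L^1)$-compactness of $\{u_{h^{\varepsilon}}\}$ from the vanishing-viscosity scheme used to construct the skeleton solution in Section 4 and from the kinetic compactness of Corollary \ref{cor-2}: pick a subsequence $u_{h^{\varepsilon_k}}\to \tilde{u}$ strongly in $L^1$, identify $\tilde{u}=u_h$ by stability of the kinetic formulation under weak convergence of the controls, and conclude by uniqueness of the limit. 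A secondary technical difficulty in (i) is the careful justification of the stochastic cross-terms in the doubling-of-variables computation when the two processes satisfy different equations driven by the same $W$, which requires use of the strengthened weak form (\ref{e-14}).
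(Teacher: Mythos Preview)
Your overall strategy (verify Condition B via doubling of variables) matches the paper's, but both items (i) and (ii) contain genuine gaps.

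\medskip
\textbf{Item (i).} The displayed inequality you claim after ``passing to the limit $\delta,\eta\to 0$'' is not attainable. In the mollified doubling identity the It\^{o} correction term is of order $\varepsilon\delta^{-1}$ (since $\partial_\xi\psi_\delta$ costs a factor $\delta^{-1}$), \emph{not} $C\varepsilon t$; it blows up as $\delta\to 0$. Likewise the flux term contributes $C\delta\gamma^{-1}$, and the BDG bound on the martingale (which you cannot simply discard if you want a pathwise or $\sup_t$ estimate before integrating in $t$) is $C\sqrt{\varepsilon}\,\gamma^{-N}$. Hence one cannot first send $\delta,\gamma\to 0$ and only afterwards let $\varepsilon\to 0$; the paper instead couples the parameters, taking $\delta=\gamma^{4/3}$ and $\gamma=\varepsilon^{1/(2(1+N))}$, so that all of $\varepsilon\delta^{-1}$, $\delta\gamma^{-1}$, $\sqrt{\varepsilon}\gamma^{-N}$ and the error terms $\mathcal{E}_t(\gamma,\delta)$ vanish simultaneously. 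A second issue: the control term produced by doubling is an $L^1$-type quantity (it reduces to $\int(f_1\bar f_2+\bar f_1 f_2)$ via the identity $|\xi-\zeta|=(\xi-\zeta)^++(\xi-\zeta)^-$), not $(\mathbb{E}\|Y^\varepsilon-Z^\varepsilon\|_{L^2}^2)^{1/2}$, so your Gronwall loop as written does not close.

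\medskip
\textbf{Item (ii).} Your plan to obtain strong $L^1([0,T];L^1)$-compactness of $\{u_{h^\varepsilon}\}$ from Corollary \ref{cor-2} does not work: that corollary yields only weak-$*$ compactness of the kinetic functions in $L^\infty$, which is far from strong $L^1$ convergence of the solutions, and ``identifying the limit by stability under weak convergence of controls'' is precisely what (ii) asks you to prove. The paper circumvents this by a two-layer approximation: first add a viscosity $\eta\Delta$, then truncate the flux to a Lipschitz $A^R$. For the approximants $u^{\eta,R}_{h^\varepsilon}$ one has genuine $L^2([0,T];H^1)\cap W^{\alpha,2}([0,T];H^{-1})$ bounds uniform in $\varepsilon$, hence Aubin--Lions compactness in $L^2([0,T];H)$, and the passage $h^\varepsilon\rightharpoonup h$ can then be handled by an energy estimate. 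The layers are removed using a \emph{uniform-in-$h\in S_M$} convergence $u^\eta_h\to u_h$ (proved again by doubling with $\eta\gamma^{-2}$ controlled) and the truncation convergence (\ref{eqq-13}). Without this detour through the parabolic/Lipschitz level, the term $\int\mathrm{sgn}(u_{h^\varepsilon}-u_h)\Phi(u_h)(h^\varepsilon-h)$ cannot be shown to vanish.
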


\section{Skeleton equation}

\subsection{Existence and uniqueness of solutions to the skeleton equation}\label{s-1}
Fix $h\in S_M$, and assume $h(t)=\sum_{k\geq 1}h^k(t)e_k$, where $\{e_k\}_{k\geq 1}$ is an orthonormal basis of $U$. Now, we introduce definitions of solution to the skeleton equation (\ref{P-2}).
\begin{dfn}(Kinetic solution)
Let $u_0\in L^{\infty}(\mathbb{T}^N)$. A measurable function $u_h: \mathbb{T}^N\times [0,T]\rightarrow \mathbb{R}$ is said to be a kinetic solution to (\ref{P-2}), if for any $p\geq 1$, there exists $C_p\geq 0$ such that
\[
\underset{0\leq t\leq T}{{\rm{ess\sup}}}\ \|u_h(t)\|^p_{L^p(\mathbb{T}^N)}\leq C_p,
\]
and if there exists a measure $m_h\in \mathcal{M}^+_0(\mathbb{T}^N\times [0,T]\times \mathbb{R})$ such that $f_h:= I_{u_h>\xi}$ satisfies that for all $\varphi\in C^1_c(\mathbb{T}^N\times [0,T)\times \mathbb{R})$,
\begin{eqnarray}\notag
&&\int^T_0\langle f_h(t), \partial_t \varphi(t)\rangle dt+\langle f_0, \varphi(0)\rangle +\int^T_0\langle f_h(t), a(\xi)\cdot \nabla \varphi (t)\rangle dt\\
\label{P-3}
&=&-\sum_{k\geq 1}\int^T_0\int_{\mathbb{T}^N} g_k(x,u_h(x,t))\varphi (x,t, u_h(x,t))h^k(t)dxdt + m_h(\partial_{\xi} \varphi),
\end{eqnarray}
where $f_0(x,\xi)=I_{u_0(x)>\xi}$.
\end{dfn}

\begin{dfn}(Generalized kinetic solution)
Let $f_0:\mathbb{T}^N\times \mathbb{R}\rightarrow [0,1]$ be a kinetic function. A measurable function $f_h:\mathbb{T}^N\times [0,T]\times \mathbb{R}\rightarrow [0,1]$  is said to be a generalized kinetic solution to (\ref{P-2}) with the initial datum $f_0$, if $(f_h(t))=(f_h(t,\cdot,\cdot))$ is a kinetic function such that for all $p\geq 1$, $\nu^h:=-\partial_{\xi} f_h$ satisfies
\begin{eqnarray}
\underset{0\leq t\leq T}{{\rm{ess\sup}}}\ \int_{\mathbb{T}^N}\int_{\mathbb{R}}|\xi|^pd\nu^h_{x,t}(\xi)dx\leq C_p,
\end{eqnarray}
where $C_p$ is a positive constant and there exists a measure $m_h\in \mathcal{M}^+_0(\mathbb{T}^N\times [0,T]\times \mathbb{R})$ such that for all $\varphi\in C^1_c(\mathbb{T}^N\times [0,T)\times \mathbb{R})$,
\begin{eqnarray}\label{P-4}\notag
&&\int^T_0\langle f_h(t), \partial_t \varphi(t)\rangle dt+\langle f_0, \varphi(0)\rangle +\int^T_0\langle f_h(t), a(\xi)\cdot \nabla \varphi (t)\rangle dt\\
&& \quad=-\sum_{k\geq 1}\int^T_0\int_{\mathbb{T}^N}\int_{\mathbb{R}} g_k(x,\xi)\varphi (x,t,\xi)h^k(t)d\nu^h_{x,t}(\xi)dxdt + m_h(\partial_{\xi} \varphi).
\end{eqnarray}
\end{dfn}

\begin{thm}\label{thm-1}
(Existence)
 Let $u_0\in L^{\infty}(\mathbb{T}^N)$. Assume Hypothesis H holds, then for any $T>0$, (\ref{P-2}) has a generalized kinetic solution $f_h$ on $[0,T]$ with initial datum $f_0=I_{u_0>\xi}$.
\end{thm}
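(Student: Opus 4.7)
The plan is to implement the vanishing viscosity scheme following Debussche and Vovelle \cite{D-V-1}, with the stochastic integral replaced by a deterministic $h$-driven integral. For each $\eta>0$ I would introduce the regularised parabolic skeleton equation
\[
\partial_t u_h^\eta + div(A(u_h^\eta)) = \eta \Delta u_h^\eta + \Phi(u_h^\eta) h(t), \quad u_h^\eta(0) = u_0,
\]
which is a semilinear parabolic PDE with Lipschitz forcing (via (\ref{equ-30-1})) and polynomially growing flux; it admits a unique solution in $L^\infty([0,T];L^2(\mathbb{T}^N))\cap L^2([0,T];H^1(\mathbb{T}^N))$ by a standard Galerkin scheme. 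Testing against $|u_h^\eta|^{p-2}u_h^\eta$ for every even $p\geq 2$, the convective term integrates to zero by periodicity, the viscous term produces a favourable sign, and (\ref{equ-28}) together with $\int_0^T |h(s)|_U^2\,ds\leq M$ and Gronwall's inequality yield the uniform bound
\[
\sup_{0\leq t\leq T}\|u_h^\eta(t)\|_{L^p(\mathbb{T}^N)}^p + \eta\int_0^T\int_{\mathbb{T}^N}|u_h^\eta|^{p-2}|\nabla u_h^\eta|^2\,dx\,dt \leq C(p,M,\|u_0\|_{L^\infty},T),
\]
uniformly in $\eta$.

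Next, applying the chain rule as in Proposition \ref{prppp-1} but without the It\^o correction and with the Brownian integral replaced by a Bochner integral, $f_h^\eta := I_{u_h^\eta>\xi}$ together with $m_h^\eta := \eta|\nabla u_h^\eta|^2\delta_{\xi=u_h^\eta(x,t)}$ satisfies the viscous kinetic formulation
\begin{align*}
& \int_0^T\langle f_h^\eta,\partial_t\varphi\rangle dt + \langle f_0,\varphi(0)\rangle + \int_0^T\langle f_h^\eta, a(\xi)\cdot\nabla\varphi\rangle dt + \eta\int_0^T\langle f_h^\eta,\Delta\varphi\rangle dt \\
& \qquad = -\sum_{k\geq 1}\int_0^T\int_{\mathbb{T}^N} g_k(x,u_h^\eta)\,\varphi(x,t,u_h^\eta)\,h^k(t)\,dx\,dt + m_h^\eta(\partial_\xi\varphi)
\end{align*}
for every $\varphi\in C_c^1(\mathbb{T}^N\times[0,T)\times\mathbb{R})$. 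The energy estimate bounds $m_h^\eta(\mathbb{T}^N\times[0,T]\times\mathbb{R})$ uniformly in $\eta$, while the higher $L^p$ estimates enforce the vanishing-at-infinity property (\ref{equ-37}) uniformly, so $m_h^\eta$ sits in a weakly-$*$ relatively compact subset of $\mathcal{M}_0^+(\mathbb{T}^N\times[0,T]\times\mathbb{R})$.

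Finally, I would pass to the limit $\eta\to 0$: by Theorem \ref{thm-6} and Corollary \ref{cor-2}, extract subsequences such that the Young measures $\nu^{h^\eta}_{x,t} := \delta_{u_h^\eta(x,t)}$ converge to a Young measure $\nu^h$ satisfying (\ref{equ-26}), $f_h^\eta \rightharpoonup f_h$ weak-$*$ in $L^\infty$ with $f_h(x,t,\xi)=\nu^h_{x,t}(\xi,+\infty)$, and $m_h^\eta$ converges weakly to some $m_h\in\mathcal{M}_0^+$. The linear terms pass to the limit immediately; the viscous term is eliminated by moving $\Delta$ onto $\varphi$ and using the uniform $L^\infty$ bound on $f_h^\eta$ together with the prefactor $\eta$. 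The main obstacle is the nonlinear forcing term, which I would rewrite as
\[
\sum_{k\geq 1} g_k(x,u_h^\eta)\varphi(x,t,u_h^\eta)h^k(t) \;=\; \sum_{k\geq 1} h^k(t)\int_{\mathbb{R}} g_k(x,\xi)\,\varphi(x,t,\xi)\,d\nu^{h^\eta}_{x,t}(\xi),
\]
and then invoke (\ref{equ-20}) slice by slice in $t$: for each fixed $t$ the integrand $g_k(\cdot,\xi)\varphi(\cdot,t,\xi)$ is bounded and continuous in $\xi$ with compact $\xi$-support, while (\ref{equ-28}) combined with $h\in S_M$ supplies the $k$- and $t$-uniform dominating envelope required to apply dominated convergence in $k$ and then in $t$. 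The resulting pair $(f_h,m_h)$ satisfies (\ref{P-4}), establishing the existence of a generalized kinetic solution.
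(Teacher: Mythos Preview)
Your proposal is correct and follows exactly the route the paper indicates: the authors omit the proof entirely, stating only that it is ``similar to the proof of Theorem~\ref{thm-4} which was done in \cite{D-V-1}'', i.e.\ the vanishing-viscosity scheme you have outlined. The energy bounds, kinetic formulation for the viscous approximation, and the compactness passage via Theorem~\ref{thm-6}, Corollary~\ref{cor-2} and weak-$*$ compactness of the kinetic measures are precisely the deterministic analogue of the Debussche--Vovelle argument, with the It\^{o} correction absent and the Brownian integral replaced by the $h$-driven Bochner integral.
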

The proof of Theorem \ref{thm-1} is similar to the proof of Theorem \ref{thm-4} which was done in \cite{D-V-1}, we therefore omit it here. Moreover, as stated in Proposition \ref{prp-3}, for the generalized solution $f_h$, we have $f_h=f^+_h=f^-_h$ a.e. $t\in [0,T]$.

\

In order to prove the uniqueness of the skeleton equation (\ref{P-2}), we adopt the doubling of variables method. Similarly to (\ref{e-14}), (\ref{P-4}) can be strengthened to the following form: for all $t\in [0,T)$ and $\varphi\in C^1_c(\mathbb{T}^N\times \mathbb{R})$,
\begin{eqnarray}\notag
&&-\langle f^+_h(t),\varphi\rangle+\langle f_0, \varphi\rangle+\int^t_0\langle f_h(s), a(\xi)\cdot \nabla \varphi\rangle ds\\
\label{P-5}
&&=-\sum_{k\geq 1}\int^t_0\int_{\mathbb{T}^N}\int_{\mathbb{R}} g_k(x,\xi)\varphi (x,\xi)h^k(s)d\nu^h_{x,s}(\xi)dxds + \langle m_h,\partial_{\xi} \varphi\rangle([0,t]), a.s.,
\end{eqnarray}
where $\langle m_h,\partial_{\xi} \varphi\rangle([0,t])=\int_{\mathbb{T}^N\times[0,t]\times \mathbb{R}}\partial_{\xi}\varphi(x,\xi)dm_h(x,s,\xi)$.

\
Now, with the help of (\ref{P-5}), we prove a comparison theorem relating these two generalized kinetic solutions $f_i$, $i=1,2$ of the following equations
\begin{eqnarray}\label{P-6}
\left\{
  \begin{array}{ll}
    du^i_h+div(A(u^i_h))dt=\Phi(u^i_h) h(t)dt,\\
    u^i_h(0)=u_0.
  \end{array}
\right.
\end{eqnarray}
\begin{prp}\label{prp-1}
Under Hypothesis H, let $f_i, i=1,2$ be two generalized solutions to (\ref{P-6}). Then, for all $0< t<T$, and nonnegative test functions $\rho\in C^{\infty}(\mathbb{T}^N), \psi\in C^{\infty}_c(\mathbb{R})$, we have
\begin{eqnarray}\notag
&&\int_{(\mathbb{T}^N)^2}\int_{\mathbb{R}^2}\rho(x-y)\psi(\xi-\zeta)\Big(f^{\pm}_1(x,t,\xi)\bar{f}^{\pm}_2(y,t,\zeta)+\bar{f}^{\pm}_1(x,t,\xi){f}^{\pm}_2(y,t,\zeta)\Big)d\xi d\zeta dxdy\\ \notag
&\leq& \int_{(\mathbb{T}^N)^2}\int_{\mathbb{R}^2}\rho(x-y)\psi(\xi-\zeta)\Big(f_{1,0}(x,\xi)\bar{f}_{2,0}(y,\zeta)+\bar{f}_{1,0}(x,\xi){f}_{2,0}(y,\zeta)\Big)d\xi d\zeta dxdy\\
\label{P-7}
&&\ +K_1+\bar{K}_1+2K_2,
\end{eqnarray}
where
\begin{eqnarray*}
K_1&=&\int^t_0\int_{(\mathbb{T}^N)^2}\int_{\mathbb{R}^2} f_1(x,s,\xi)\bar{f_2}(y,s,\zeta)(a(\xi)-a(\zeta))\cdot\nabla_x \alpha d \xi d \zeta dxdyds,\\
\bar{K}_1&=&\int^t_0\int_{(\mathbb{T}^N)^2}\int_{\mathbb{R}^2} \bar{f}_1(x,s,\xi){f_2}(y,s,\zeta)(a(\xi)-a(\zeta))\cdot\nabla_x \alpha d \xi d \zeta dxdyds,
\end{eqnarray*}
and
\begin{eqnarray*}
K_2=\sum_{k\geq 1}\int^t_0\int_{(\mathbb{T}^N)^2}\rho(x-y)\int_{\mathbb{R}^2}\gamma_1(\xi,\zeta)(g_{k}(x,\xi)-g_{k}(y,\zeta))h^k(s)d \nu^1_{x,s}\otimes \nu^2_{y,s}(\xi,\zeta) dxdyds
\end{eqnarray*}
with $\gamma_1(\xi,\zeta)=\int^{\xi}_{-\infty}\psi(\xi'-\zeta)d\xi'=\int^{\xi-\zeta}_{-\infty}\psi(y)dy$.
\end{prp}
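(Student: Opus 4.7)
The plan is to execute the classical Kruzkov doubling-of-variables argument at the kinetic level, following Debussche--Vovelle \cite{D-V-1} but with the stochastic Itô correction replaced by the deterministic drift coming from $\Phi(u_h)h$. First, I would apply the strengthened weak formulation (\ref{P-5}) to $f_1^+$ with test function $(x,\xi)\mapsto \alpha(x,y,\xi,\zeta):=\rho(x-y)\psi(\xi-\zeta)$, treating $(y,\zeta)$ as frozen parameters, and then integrate the resulting identity against $d\nu^2_{y,s}(\zeta)\,dy$. Symmetrically, I would apply (\ref{P-5}) to $f_2^+$ with $(x,\xi)$ frozen, pass to the complementary kinetic equation for $\bar f_2^+=1-f_2^+$, and integrate against $d\nu^1_{x,s}(\xi)\,dx$. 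Combining these two identities using $\nu^i=-\partial_\xi f_i$ and the integration-by-parts identity $\int \psi(\xi-\zeta)\,f_1(x,s,\xi)\,d\xi = \int \gamma_1(\xi,\zeta)\,d\nu^1_{x,s}(\xi)$ produces an exact equation for $\int \alpha\, f_1^+(t)\bar f_2^+(t)\,dx\,dy\,d\xi\,d\zeta$.

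Next, I would identify the four classes of contributions that arise on the right-hand side. The initial-data contribution is precisely the first term on the right of (\ref{P-7}). The transport terms coming from $a(\xi)\cdot\nabla_x$ in the $f_1$-equation and $a(\zeta)\cdot\nabla_y$ in the $f_2$-equation combine, via $\nabla_y\rho(x-y)=-\nabla_x\rho(x-y)$, into exactly $K_1$. The forcing terms produced by the two $\Phi(u_h)h$ drifts reduce, after using $\partial_\xi\gamma_1(\xi,\zeta)=\psi(\xi-\zeta)$ together with the opposite signs introduced by $\bar f_2=1-f_2$, to $K_2$. Finally, the kinetic-measure terms $-\langle m_1,\partial_\xi\alpha\rangle$ paired with $\bar f_2$ and $-\langle m_2,\partial_\zeta\alpha\rangle$ paired with $f_1$ are non-positive: one integrates by parts in $\xi$ (resp. $\zeta$) against $\nu^2$ (resp. $\nu^1$) and uses $m_i\geq 0$, $\rho\geq 0$, $\psi\geq 0$, so these contributions can be dropped to produce the inequality. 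Repeating the same argument with the roles of $f_i$ and $\bar f_i$ swapped yields the parallel inequality with $K_1$ replaced by $\bar K_1$ and a second copy of $K_2$; summing gives (\ref{P-7}) for the right-limit representative $f^+$, and the statement for $f^-$ follows by taking $t-\varepsilon\uparrow t$ and invoking Proposition \ref{prp-3}.

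The main obstacle lies in the rigorous execution of the first step: because $f_i$ are only bounded measurable in $\xi$ and the $\nu^i$ are general Young measures, one must first mollify $\alpha$ in both $(x,\xi)$ and $(y,\zeta)$ and pass to the limit. This has to be done carefully so that (a) the time-regularity issues of the $f_i^\pm$ are handled by consistently using the one-sided representatives supplied by Proposition \ref{prp-3}, (b) the kinetic measures $m_i$, which may charge a countable set of times, contribute with the correct evaluation on $[0,t]$ rather than $[0,t)$, and (c) the polynomial growth of $a$ from Hypothesis H combines with the moment bound $\int|\xi|^p d\nu^i_{x,t}(\xi)$ built into the definition of a generalized kinetic solution to ensure that $K_1$ and $\bar K_1$ are finite. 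Once the product identity has been justified, the algebraic collapse into $K_1+\bar K_1+2K_2$ and the sign analysis of the $m_i$-terms are routine.
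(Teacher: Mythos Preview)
Your overall outline matches the paper's: doubling of variables at the kinetic level, collapsing the flux contributions into $K_1,\bar K_1$ via $\nabla_y\rho(x-y)=-\nabla_x\rho(x-y)$, combining the two forcing terms into $K_2$ through the primitive $\gamma_1$, dropping the kinetic-measure terms by sign, and recovering the $f^-$ case by letting $t_n\uparrow t$.

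The gap is in how you obtain the product identity for $\langle\langle f_1^+(t)\bar f_2^+(t),\alpha\rangle\rangle$. Writing (\ref{P-5}) for $f_1^+$ with $(y,\zeta)$ frozen gives an identity at the \emph{fixed} terminal time $t$, of the form $\langle f_1^+(t),\varphi_1\rangle=\langle f_{1,0},\varphi_1\rangle+\int_0^t(\cdots)\,ds-\langle m_1,\partial_\xi\varphi_1\rangle([0,t])$. ``Integrating this against $d\nu^2_{y,s}(\zeta)\,dy$'' has no clear meaning (which $s$?), and integrating instead against $\bar f_2^+(y,t,\zeta)\,d\zeta\,dy$ places $\bar f_2^+(t)$ outside every $s$-integral, which is not the running pairing $f_1(s)\leftrightarrow \bar f_2(s)$ needed in $I_2,\ldots,I_5$. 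The $\xi$-integration-by-parts identity $\int\psi(\xi-\zeta)f_1\,d\xi=\int\gamma_1\,d\nu^1$ that you invoke is used only \emph{after} the product formula, to rewrite $I_2,I_3$ as $K_2$; it cannot itself manufacture the product. What the paper actually does is note that $t\mapsto\langle f_1^+(t),\varphi_1\rangle$ and $t\mapsto\langle\bar f_2^+(t),\varphi_2\rangle$ are each of finite variation (there is no stochastic integral in the skeleton equation) and apply the integration-by-parts formula for BV functions (Proposition~4.5, Chapter~0 of \cite{RY99}). This is precisely what generates the correct cross terms with the $f_1^-$/$\bar f_2^+$ evaluation against $dm_i$, and once (\ref{P-10})--(\ref{P-10-1}) are in hand your remaining steps go through as written.
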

\begin{proof}
Denote $f_1(x,t,\xi)$ and $f_2(y,t,\zeta)$ be two generalized solutions to (\ref{P-6})  with the corresponding kinetic measures $m_1$ and $m_2$.
Let $\varphi_1\in C^{1}_c(\mathbb{T}^N_x\times \mathbb{R}_{\xi})$ and
$\varphi_2\in C^{1}_c(\mathbb{T}^N_y\times \mathbb{R}_{\zeta})$. By (\ref{P-5}), we have
\begin{eqnarray}\label{P-8}\notag
\langle f^{+}_1(t),\varphi_1\rangle&=&\langle f_{1,0}, \varphi_1\rangle+\int^t_0\langle f_1(s), a(\xi)\cdot \nabla_x \varphi_1(s)\rangle ds\\ \notag
&&+\sum_{k\geq 1}\int^t_0\int_{\mathbb{T}^N}\int_{\mathbb{R}} g_{k}(x,\xi)\varphi_1 (x,\xi)h^k(s)d\nu^1_{x,s}(\xi)dxds -\langle m_1,\partial_{\xi} \varphi_1\rangle([0,t]),
\end{eqnarray}
where $f_{1,0}=I_{u_0>\xi}$ and $\nu^1_{x,s}(\xi)=-\partial_{\xi}f_1(s,x,\xi)=\partial_{\xi}\bar{f}_1(s,x,\xi)$.
Similarly,
\begin{eqnarray}\label{P-9}\notag
\langle \bar{f}^{+}_2(t),\varphi_2\rangle&=&\langle \bar{f}_{2,0}, \varphi_2\rangle+\int^t_0\langle \bar{f}_2(s), a(\zeta)\cdot \nabla_y \varphi_2(s)\rangle ds\\ \notag
&&-\sum_{k\geq 1}\int^t_0\int_{\mathbb{T}^N}\int_{\mathbb{R}} g_{k}(y,\zeta)\varphi_2 (y,\zeta)h^k(s)d\nu^2_{y,s}(\zeta)dyds +\langle m_2,\partial_{\zeta} \varphi_2\rangle([0,t]).
\end{eqnarray}
where $f_{2,0}=I_{u_0>\zeta}$ and  $\nu^2_{y,s}(\zeta)=\partial_{\zeta}\bar{f}_2(s,y,\zeta)=-\partial_{\zeta}f_2(s,y,\zeta)$.

Clearly, $t\mapsto\langle  f^{+}_1(t),\varphi_1\rangle$ and $t\mapsto\langle \bar{f}^{+}_2(t),\varphi_2\rangle$ are functions of finite variation.
Denote the duality distribution over $\mathbb{T}^N_x\times \mathbb{R}_\xi\times \mathbb{T}^N_y\times \mathbb{R}_\zeta$ by $\langle\langle\cdot,\cdot\rangle\rangle$. Setting $\alpha(x,\xi,y,\zeta)=\varphi_1(x,\xi)\varphi_2(y,\zeta)$ and using the integration by parts formula (see Proposition (4.5) on P6 in \cite{RY99}), we have
\begin{eqnarray}\notag
\langle\langle f^+_1(t)\bar{f}^+_2(t), \alpha \rangle\rangle
&=& \langle\langle f_{1,0}\bar{f}_{2,0}, \alpha \rangle\rangle+\int^t_0\int_{(\mathbb{T}^N)^2}\int_{\mathbb{R}^2}f_1\bar{f}_2(a(\xi)\cdot \nabla_x+a(\zeta)\cdot \nabla_y) \alpha d\xi d\zeta dxdyds\\ \notag
\quad &&-\sum_{k\geq 1}\int^t_0\int_{(\mathbb{T}^N)^2}\int_{\mathbb{R}^2}f_1(s,x,\xi)\alpha g_{k}(y,\zeta)h^k(s)d\xi d\nu^2_{y,s}(\zeta)dxdyds\\ \notag
\quad &&+\sum_{k\geq 1}\int^t_0\int_{(\mathbb{T}^N)^2}\int_{\mathbb{R}^2}\bar{f}_2(s,y,\zeta)\alpha g_{k}(x,\xi)h^k(s)d\zeta d\nu^1_{x,s}(\xi)dxdyds\\ \notag
\quad &&+\int_{(0,t]}\int_{(\mathbb{T}^N)^2}\int_{\mathbb{R}^2}f^-_1(s,x,\xi)\partial_{\zeta} \alpha dm_2(y,\zeta,s)d\xi dx\\ \notag
\quad &&-\int_{(0,t]}\int_{(\mathbb{T}^N)^2}\int_{\mathbb{R}^2}\bar{f}^+_2(s,y,\zeta)\partial_{\xi} \alpha dm_1(x,\xi,s)d\zeta dy\\
\label{P-10}
&=:& \langle\langle f_{1,0}\bar{f}_{2,0}, \alpha \rangle\rangle+I_1+I_2+I_3+I_4+I_5.
\end{eqnarray}
Similarly, we  have
\begin{eqnarray}\notag
\langle\langle \bar{f}^+_1(t)f^+_2(t), \alpha \rangle\rangle
&=& \langle\langle \bar{f}_{1,0}f_{2,0}, \alpha \rangle\rangle+\int^t_0\int_{(\mathbb{T}^N)^2}\int_{\mathbb{R}^2}\bar{f}_1{f}_2(a(\xi)\cdot \nabla_x+a(\zeta)\cdot \nabla_y) \alpha d\xi d\zeta dxdyds\\ \notag
&& \ +\sum_{k\geq 1}\int^t_0\int_{(\mathbb{T}^N)^2}\int_{\mathbb{R}^2}\bar{f}_1(s,x,\xi)\alpha g_{k}(y,\zeta)h^k(s)d\xi d\nu^2_{y,s}(\zeta)dxdyds\\ \notag
&&\ -\sum_{k\geq 1}\int^t_0\int_{(\mathbb{T}^N)^2}\int_{\mathbb{R}^2}{f}_2(s,y,\zeta)\alpha g_{k}(x,\xi)h^k(s) d\nu^1_{x,s}(\xi)d\zeta dxdyds\\ \notag
&&\ -\int_{(0,t]}\int_{(\mathbb{T}^N)^2}\int_{\mathbb{R}^2}\bar{f}^+_1(s,x,\xi)\partial_{\zeta} \alpha dm_2(y,\zeta,s)d\xi dx\\ \notag
 &&\ +\int_{(0,t]}\int_{(\mathbb{T}^N)^2}\int_{\mathbb{R}^2}{f}^-_2(s,y,\zeta)\partial_{\xi} \alpha dm_1(x,\xi,s)d\zeta dy\\
\label{P-10-1}
&=:& \langle\langle \bar{f}_{1,0}{f}_{2,0}, \alpha \rangle\rangle+\bar{I}_1+\bar{I}_2+\bar{I}_3+\bar{I}_4+\bar{I}_5.
\end{eqnarray}

Following the idea developed by \cite{D-V-1}, we can relax the conditions imposed on $\alpha$. Specifically,
By a density argument, (\ref{P-10}) and (\ref{P-10-1}) remain true for any test function $\alpha\in C^{\infty}_c(\mathbb{T}^N_x\times \mathbb{R}_\xi\times \mathbb{T}^N_y\times \mathbb{R}_\zeta)$. The assumption that $\alpha$ is compactly supported can be relaxed thanks to (\ref{equ-37}) on $m_i$ and (\ref{equ-26}) on $\nu_i$, $i=1,2.$
Using a truncation argument of $\alpha$, it is easy to see that (\ref{P-10}) and (\ref{P-10-1}) remain true if
we take $\alpha \in C^{\infty}_b(\mathbb{T}^N_x\times \mathbb{R}_\xi\times \mathbb{T}^N_y\times \mathbb{R}_\zeta)$, which is compactly supported in a neighbourhood of the diagonal
\[
\Big\{(x,\xi,x,\xi); x\in \mathbb{T}^N, \xi\in \mathbb{R}\Big\}.
\]
Taking $\alpha=\rho(x-y)\psi(\xi-\zeta)$, then we have the following remarkable identities
\begin{eqnarray}\label{P-11}
(\nabla_x+\nabla_y)\alpha=0, \quad (\partial_{\xi}+\partial_{\zeta})\alpha=0.
\end{eqnarray}
Referring to Proposition 13 in \cite{D-V-1}, we know that $I_4, I_5, \bar{I}_4, \bar{I}_5$ in (\ref{P-10}) and (\ref{P-10-1}) are all non-positive. As a result, we have
\begin{eqnarray*}
&&\int_{(\mathbb{T}^N)^2}\int_{\mathbb{R}^2}\rho_\gamma (x-y)\psi_{\delta}(\xi-\zeta)(f^{+}_1(x,t,\xi)\bar{f}^{+}_2(y,t,\zeta)+\bar{f}^{+}_1(x,t,\xi)f^{+}_2(y,t,\zeta))d\xi d\zeta dxdy\\
&\leq & \int_{(\mathbb{T}^N)^2}\int_{\mathbb{R}^2}\rho_\gamma (x-y)\psi_{\delta}(\xi-\zeta)(f_{1,0}(x,\xi)\bar{f}_{2,0}(y,\zeta)+\bar{f}_{1,0}(x,\xi)f_{2,0}(y,\zeta))d\xi d\zeta dxdy\\
&&\ +\sum^{3}_{i=1}(I_i+\bar{I}_i).
\end{eqnarray*}
With the aid of (\ref{P-11}), we deduce that
\begin{eqnarray*}
I_1&=&\int^t_0\int_{(\mathbb{T}^N)^2}\int_{\mathbb{R}^2}f_1\bar{f}_2(a(\xi)-a(\zeta))\cdot\nabla_x\alpha d\xi d\zeta dxdyds,\\
\bar{I}_1&=&\int^t_0\int_{(\mathbb{T}^N)^2}\int_{\mathbb{R}^2}\bar{f}_1f_2(a(\xi)-a(\zeta))\cdot\nabla_x\alpha d\xi d\zeta dxdyds.
\end{eqnarray*}
Let
\[
\gamma_1(\xi,\zeta)=\int^{\xi}_{-\infty}\psi(\xi'-\zeta)d\xi'
\]
for some $\xi, \zeta\in\mathbb{R}$.  Then
\begin{eqnarray}\notag
I_2
&=&-\sum_{k\geq 1}\int^t_0\int_{(\mathbb{T}^N)^2}\int_{\mathbb{R}^2}f_1(s,x,\xi)\rho(x-y)\partial_{\xi} \gamma_1(\xi,\zeta) g_{k}(y,\zeta)h^k(s)d\xi d\nu^2_{y,s}(\zeta)dxdyds\\ \notag
&=&-\sum_{k\geq 1}\int^t_0\int_{(\mathbb{T}^N)^2}\int_{\mathbb{R}}\rho(x-y) g_{k}(y,\zeta)h^k(s)\Big(\int_{\mathbb{R}}f_1(s,x,\xi)\partial_{\xi} \gamma_1(\xi,\zeta)d\xi\Big) d\nu^2_{y,s}(\zeta)dxdyds\\
\label{P-12}
&=&-\sum_{k\geq 1}\int^t_0\int_{(\mathbb{T}^N)^2}\int_{\mathbb{R}^2}\rho(x-y)\gamma_1(\xi,\zeta) g_{k}(y,\zeta)h^k(s)d \nu^1_{x,s}\otimes \nu^2_{y,s}(\xi,\zeta)dxdyds.
\end{eqnarray}
Similarly, for $\xi, \zeta\in\mathbb{R}$, let
\[
\gamma_2(\zeta,\xi)=\int^{\infty}_{\zeta}\psi(\xi-\zeta')d\zeta',
\]
then
\begin{eqnarray}\notag
I_3
&=&-\sum_{k\geq 1}\int^t_0\int_{(\mathbb{T}^N)^2}\int_{\mathbb{R}^2}\bar{f}_2(s,y,\zeta)\rho(x-y)\partial_{\zeta}\gamma_2(\zeta, \xi)g_{k}(x,\xi)h^k(s)d\nu^1_{x,s}(\xi)d\zeta dxdyds\\ \notag
&=&-\sum_{k\geq 1}\int^t_0\int_{(\mathbb{T}^N)^2}\int_{\mathbb{R}}\rho(x-y)g_{k}(x,\xi)h^k(s)\Big(\int_{\mathbb{R}} \bar{f}_2(s,y,\zeta)\partial_{\zeta}\gamma_2(\zeta, \xi)d\zeta\Big) d\nu^1_{x,s}(\xi) dxdyds\\
\label{P-13}
&=& \sum_{k\geq 1}\int^t_0\int_{(\mathbb{T}^N)^2}\int_{\mathbb{R}^2}\gamma_2(\zeta, \xi)\rho(x-y)g_{k}(x,\xi)h^k(s) d \nu^1_{x,s}\otimes \nu^2_{y,s}(\xi,\zeta) dxdyds.
\end{eqnarray}

Note that $\gamma_1(\xi,\zeta)=\gamma_2(\zeta, \xi)=\int^{\xi-\zeta}_{-\infty}\psi(y)dy$. We deduce from (\ref{P-12}) and (\ref{P-13}) that
\begin{eqnarray*}
I_2+I_3=\sum_{k\geq 1}\int^t_0\int_{(\mathbb{T}^N)^2}\rho(x-y)\int_{\mathbb{R}^2}\gamma_1(\xi,\zeta)(g_{k}(x,\xi)-g_{k}(y,\zeta))h^k(s)d \nu^1_{x,s}\otimes \nu^2_{y,s}(\xi,\zeta) dxdyds.
\end{eqnarray*}
Similarly, we have
\begin{eqnarray*}
\bar{I}_2+\bar{I}_3=\sum_{k\geq 1}\int^t_0\int_{(\mathbb{T}^N)^2}\rho(x-y)\int_{\mathbb{R}^2}\gamma_1(\xi,\zeta)(g_{k}(x,\xi)-g_{k}(y,\zeta))h^k(s)d \nu^1_{x,s}\otimes \nu^2_{y,s}(\xi,\zeta) dxdyds.
\end{eqnarray*}
Taking $K_1=I_1, \bar{K}_1=\bar{I}_1$ and $K_2=I_2+I_3=\bar{I}_2+\bar{I}_3$,
the equation (\ref{P-7}) is established for $f^+_i$. To obtain the result for  $f^-_i$, we take $t_n\uparrow t$, write (\ref{P-7}) for $f^+_i(t_n)$ and let $n\rightarrow \infty$.

\end{proof}

Now, we are in a position to establish the uniqueness.
\begin{thm}\label{thm-2}
 Let $u_0\in L^{\infty}(\mathbb{T}^N)$ and assume Hypothesis H holds. Then there exists at most one kinetic solution to (\ref{P-2}) with the initial datum $u_0$. Besides, any generalized solution $f_h$ is actually a kinetic solution, i.e. if $f_h$ is a generalized solution to (\ref{P-2}) with initial datum $I_{u_0>\xi}$, then there exists a kinetic solution $u_h$ to (\ref{P-2}) with initial datum $u_0$ such that $f_h(x,t,\xi)=I_{u_h(x,t)>\xi}$, for a.e. $(x,t,\xi)$.
\end{thm}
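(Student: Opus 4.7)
The plan is to extract both uniqueness and the rigidity of generalized kinetic solutions from the comparison estimate (\ref{P-7}) by inserting approximate identities and passing to the limit, paralleling the Debussche--Vovelle argument with stochastic integrals replaced by the Cameron--Martin driver $h\in S_M$. I would first choose non-negative, symmetric mollifiers $\rho_\gamma\in C^\infty(\mathbb{T}^N)$ and $\psi_\delta\in C^\infty_c(\mathbb{R})$ approximating the Dirac mass at $0$, and apply (\ref{P-7}) with $\rho=\rho_\gamma$, $\psi=\psi_\delta$ to two generalized solutions $f_1,f_2$ of (\ref{P-6}) sharing the datum $f_0=I_{u_0>\xi}$. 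Since $f_{1,0}=f_{2,0}$, the initial-data contribution collapses:
\begin{equation*}
\int_{(\mathbb{T}^N)^2}\int_{\mathbb{R}^2} \rho_\gamma(x-y)\psi_\delta(\xi-\zeta) \bigl(f_{1,0}\bar f_{2,0}+\bar f_{1,0}f_{2,0}\bigr) d\xi d\zeta dxdy \xrightarrow[\gamma,\delta\to 0]{} 2\int_{\mathbb{T}^N}\int_\mathbb{R} I_{u_0>\xi}(1-I_{u_0>\xi}) d\xi dx = 0.
\end{equation*}

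I would next treat the flux contribution $K_1+\bar K_1$. Exploiting the symmetry $(\partial_\xi+\partial_\zeta)\alpha=0$ together with the polynomial estimate $|a(\xi)-a(\zeta)|\le \Gamma(\xi,\zeta)|\xi-\zeta|$ from (\ref{qeq-22}), an integration by parts in $\xi,\zeta$ (legitimate thanks to the moment bound on $\nu^h$) shows that the singular factor $\nabla_x\rho_\gamma$ is compensated by the smallness $|\xi-\zeta|\le C\delta$ on the support of $\psi_\delta$. This is the classical Lions--Perthame--Tadmor cancellation mechanism and yields $K_1+\bar K_1\to 0$ on letting $\delta\to 0$ first and $\gamma\to 0$ afterwards.

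The remaining term $K_2$ is what drives the Gr\"onwall closure. Applying Cauchy--Schwarz in $k$ together with (\ref{equ-29}) and $|\gamma_1|\le 1$ gives
\begin{equation*}
|K_2|\le \sqrt{D_1}\int_0^t |h(s)|_U \int_{(\mathbb{T}^N)^2}\int_{\mathbb{R}^2} \rho_\gamma(x-y)\bigl(|x-y|+|\xi-\zeta|\bigr) d\nu^1_{x,s}\otimes \nu^2_{y,s} dxdyds.
\end{equation*}
The $|x-y|$ contribution is $O(\gamma)$ since each $\nu^i_{x,s}$ is a probability measure, and the $|\xi-\zeta|$ contribution is handled by the elementary identity $\int_{\mathbb{R}^2}|\xi-\zeta| d\nu^1_{x,s}\otimes \nu^2_{y,s}=\int_\mathbb{R} \bigl(f_1(x,s,\eta)\bar f_2(y,s,\eta)+\bar f_1(x,s,\eta)f_2(y,s,\eta)\bigr)d\eta$. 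Setting
\begin{equation*}
\Lambda(t):=\int_{\mathbb{T}^N}\int_\mathbb{R} \bigl(f^+_1\bar f^+_2+\bar f^+_1 f^+_2\bigr)(x,t,\eta) d\eta dx,
\end{equation*}
and passing to the limit $\gamma\to 0$, (\ref{P-7}) assembles into the linear integral inequality
\begin{equation*}
\Lambda(t) \le C\int_0^t |h(s)|_U \Lambda(s) ds,\qquad t\in[0,T],
\end{equation*}
to which Gr\"onwall's lemma (using $h\in L^2(0,T;U)\subset L^1(0,T;U)$) applies to give $\Lambda\equiv 0$.

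Specialising to $f_1=f_2=f_h$ turns $\Lambda\equiv 0$ into $\int_{\mathbb{T}^N}\int_\mathbb{R} f^+_h(1-f^+_h) d\eta dx=0$, so $f^+_h\in\{0,1\}$ almost everywhere; combined with the monotonicity of $f^+_h$ in $\xi$, this forces $f^+_h(x,t,\cdot)=I_{u^+_h(x,t)>\cdot}$ for a measurable function $u^+_h$, whose $L^p$ regularity follows from the moment bound on $\nu^h$. The same argument for $f^-_h$ completes the rigidity claim; applying the estimate to two kinetic solutions $f_i=I_{u_i>\xi}$ with the same initial datum then yields $\int_{\mathbb{T}^N}|u_1-u_2| dx=\Lambda(t)=0$ for all $t$, hence uniqueness. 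The main obstacle I anticipate is the limit $\gamma,\delta\to 0$ in $K_1+\bar K_1$: because $a$ grows only polynomially rather than being globally Lipschitz, uniform integrability in $\gamma,\delta$ must be secured via the $L^p$ bounds on $\nu^h$ before the $|\xi-\zeta|$ smallness can be exploited in the cancellation.
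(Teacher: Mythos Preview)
Your proposal is correct and follows essentially the same doubling-of-variables strategy as the paper: apply Proposition~\ref{prp-1} with mollifiers $\rho_\gamma,\psi_\delta$, bound $K_1+\bar K_1$ by $O(\delta\gamma^{-1})$ via the polynomial flux estimate (no integration by parts in $\xi,\zeta$ is actually needed---the factor $|\xi-\zeta|\le\delta$ on $\mathrm{supp}\,\psi_\delta$ together with the $L^p$ moment bounds suffices), convert the $|\xi-\zeta|$ contribution of $K_2$ back into $\int (f_1\bar f_2+\bar f_1f_2)$, close by Gr\"onwall, and reduce to equilibria by taking $f_1=f_2$. The only difference is cosmetic: the paper first applies Gr\"onwall to the mollified inequality and then lets $\delta=\gamma^{4/3}\to0$, whereas you pass to the limit first and apply Gr\"onwall to the diagonal quantity $\Lambda$; both orderings are valid here.
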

\begin{proof}
Let $\rho_{\gamma}, \psi_{\delta}$ be approximations to the identity on $\mathbb{T}^N$ and $\mathbb{R}$, respectively. That is, let $\rho\in C^{\infty}(\mathbb{T}^N)$, $\psi\in C^{\infty}_c(\mathbb{R})$ be symmetric nonnegative functions such as $\int_{\mathbb{T}^N}\rho =1$, $\int_{\mathbb{R}}\psi =1$ and supp$\psi \subset (-1,1)$. We define
\[
\rho_{\gamma}(x)=\frac{1}{\gamma^N}\rho\Big(\frac{x}{\gamma}\Big), \quad \psi_{\delta}(\xi)=\frac{1}{\delta}\psi\Big(\frac{\xi}{\delta}\Big).
\]
Letting $\rho:=\rho_{\gamma}(x-y)$ and $\psi:=\psi_{\delta}(\xi-\zeta)$ in Proposition \ref{prp-1}, we get from (\ref{P-7}) that
\begin{eqnarray}\notag
 &&\int_{(\mathbb{T}^N)^2}\int_{\mathbb{R}^2}\rho_\gamma (x-y)\psi_{\delta}(\xi-\zeta)(f^{\pm}_1(x,t,\xi)\bar{f}^{\pm}_2(y,t,\zeta)+\bar{f}^{\pm}_1(x,t,\xi)f^{\pm}_2(y,t,\zeta))d\xi d\zeta dxdy\\
\notag
&\leq& \int_{(\mathbb{T}^N)^2}\int_{\mathbb{R}^2}\rho_\gamma (x-y)\psi_{\delta}(\xi-\zeta)(f_{1,0}(x,\xi)\bar{f}_{2,0}(y,\zeta)+\bar{f}_{1,0}(x,\xi)f_{2,0}(y,\zeta))d\xi d\zeta dxdy\\
\label{equ-39}
&&\ +\tilde{K}_1+\tilde{\bar{K}}_1+2\tilde{K}_2,
\end{eqnarray}
where $\tilde{K}_1$, $\tilde{\bar{K}}_1$, $\tilde{K}_2$ in (\ref{equ-39}) are the corresponding $K_1$, $\bar{K}_1, K_2$ in the statement of Proposition \ref{prp-1} with $\rho$, $\psi$ replaced by $\rho_{\gamma}$, $\psi_{\delta}$, respectively. Let $\tilde{\gamma}_1(\xi,\zeta)=\int^{\xi}_{-\infty}\psi_{\delta}(\xi'-\zeta)d\xi'$, for simplicity, we  denote $\tilde{\gamma}_1(\xi,\zeta)=\gamma_1(\xi,\zeta)$.
In the following, we devote to making estimates of $\tilde{K}_1, \tilde{\bar{K}}_1$ and  $\tilde{K}_2$.

For $\tilde{K}_1$ and $\tilde{\bar{K}}_1$, by (\ref{qeq-22}) and using the same method as in the proof of Theorem 15 in \cite{D-V-1}, we have
\begin{eqnarray}\label{P-15}
|\tilde{K}_1|+|\tilde{\bar{K}}_1|\leq 2TC_p \delta \gamma^{-1}.
\end{eqnarray}
For $\tilde{K}_2$, by utilizing  (\ref{equ-29}), we deduce that
\begin{eqnarray*}
\tilde{K}_2
&\leq& \int^t_0\int_{(\mathbb{T}^N)^2}\rho_{\gamma}(x-y)\int_{\mathbb{R}^2}{\gamma}_1(\xi,\zeta)\sum_{k\geq 1}|g_{k}(x,\xi)-g_{k}(y,\zeta)||h^k(s)|d \nu^1_{x,s}\otimes \nu^2_{y,s}(\xi,\zeta) dxdyds\\
&\leq& \int^t_0\int_{(\mathbb{T}^N)^2}\rho_{\gamma}(x-y)
\int_{\mathbb{R}^2}{\gamma}_1(\xi,\zeta)\Big(\sum_{k\geq 1}|g_{k}(x,\xi)-g_{k}(y,\zeta)|^2\Big)^{\frac{1}{2}}
\Big(\sum_{k\geq 1}|h^k(s)|^2\Big)^{\frac{1}{2}}
d \nu^1_{x,s}\otimes \nu^2_{y,s}(\xi,\zeta) dxdyds\\
&\leq&\sqrt{D_1}\int^t_0|h(s)|_U\int_{(\mathbb{T}^N)^2}\rho_{\gamma}(x-y)|x-y|\int_{\mathbb{R}^2}{\gamma}_1(\xi,\zeta)d \nu^1_{x,s}\otimes \nu^2_{y,s}(\xi,\zeta) dxdyds\\
&&+\sqrt{D_1}\int^t_0|h(s)|_U\int_{(\mathbb{T}^N)^2}\rho_{\gamma}(x-y)\int_{\mathbb{R}^2}{\gamma}_1(\xi,\zeta)|\xi-\zeta|d \nu^1_{x,s}\otimes \nu^2_{y,s}(\xi,\zeta) dxdyds\\
&=:&\tilde{K}_{2,1}+\tilde{K}_{2,2},
\end{eqnarray*}
Note that
\begin{eqnarray*}
\int_{\mathbb{R}^2}{\gamma}_1(\xi,\zeta)d \nu^1_{x,s}\otimes \nu^2_{y,s}(\xi,\zeta)&\leq& 1,
\\
\int_{(\mathbb{T}^N)^2}\rho_{\gamma}(x-y)|x-y|dxdy&\leq&\gamma,
\end{eqnarray*}
it follows that
\begin{eqnarray}\label{P-14}
\tilde{K}_{2,1}\leq\sqrt{D_1} \gamma\int^t_0|h(s)|_Uds
\leq\sqrt{D_1}\gamma(T+M).
\end{eqnarray}
Moreover, by $\nu^1_{x,s}(\xi)=-\partial_{\xi}f_1(s,x,\xi)=\partial_{\xi}\bar{f}_1(s,x,\xi)$ and $\nu^2_{y,s}(\zeta)=\partial_{\zeta}\bar{f}_2(s,y,\zeta)=-\partial_{\zeta}f_2(s,y,\zeta)$, it follows that
\begin{eqnarray*} \notag
\tilde{K}_{2,2}&\leq& \sqrt{D_1}\int^t_0|h(s)|_U\int_{(\mathbb{T}^N)^2}\int_{\mathbb{R}^2}\rho_{\gamma}(x-y)|\xi-\zeta|d \nu^1_{x,s}\otimes \nu^2_{y,s}(\xi,\zeta) dxdyds\\ \notag
&=& \sqrt{D_1}\int^t_0|h(s)|_U\int_{(\mathbb{T}^N)^2}\int_{\mathbb{R}^2}\rho_{\gamma}(x-y)(\xi-\zeta)^{+}d \nu^1_{x,s}\otimes \nu^2_{y,s}(\xi,\zeta) dxdyds\\ \notag
&& +\sqrt{D_1}\int^t_0|h(s)|_U\int_{(\mathbb{T}^N)^2}\int_{\mathbb{R}^2}\rho_{\gamma}(x-y)(\xi-\zeta)^{-}d \nu^1_{x,s}\otimes \nu^2_{y,s}(\xi,\zeta) dxdyds\\
&=& \sqrt{D_1}\int^t_0|h(s)|_U\int_{(\mathbb{T}^N)^2}\int_{\mathbb{R}}\rho_{\gamma}(x-y)(f^{\pm}_1(x,s,\xi)\bar{f}^{\pm}_2(y,s,\xi)+\bar{f}^{\pm}_1(x,s,\xi){f}^{\pm}_2(y,s,\xi))d\xi dxdyds,
\end{eqnarray*}
where we have used $\delta_{\xi=\zeta}=-\partial_{\xi}\partial_{\zeta}(\xi-\zeta)^{+}=-\partial_{\xi}\partial_{\zeta}(\xi-\zeta)^{-}$.

By utilizing $\int_{\mathbb{R}}\psi_{\delta}(\xi-\zeta)d\zeta=1$ and $\int^{\delta}_0\psi_{\delta}(\zeta')d\zeta'=\int^{0}_{-\delta}\psi_{\delta}(\zeta')d\zeta'=\frac{1}{2}$, we get
\begin{eqnarray}\notag
&&\Big|\int_{(\mathbb{T}^N)^2}\int_{\mathbb{R}}\rho_{\gamma}(x-y)f^{\pm}_1(x,s,\xi)\bar{f}^{\pm}_2(y,s,\xi)d\xi dxdy\\ \notag
&&\ -\int_{(\mathbb{T}^N)^2}\int_{\mathbb{R}^2}f^{\pm}_1(x,s,\xi)\bar{f}^{\pm}_2(y,s,\zeta)\rho_{\gamma}(x-y)\psi_{\delta}(\xi-\zeta)dxdyd\xi d\zeta\Big|\\ \notag
&=&\Big|\int_{(\mathbb{T}^N)^2}\rho_{\gamma}(x-y)\int_{\mathbb{R}}f^{\pm}_1(x,s,\xi)\int_{\mathbb{R}}\psi_{\delta}(\xi-\zeta)(\bar{f}^{\pm}_2(y,s,\xi)-\bar{f}^{\pm}_2(y,s,\zeta))d\zeta d\xi dxdy\Big|\\ \notag
&\leq&\int_{(\mathbb{T}^N)^2}\rho_{\gamma}(x-y)\int_{\mathbb{R}}f^{\pm}_1(x,s,\xi)\int^{\xi}_{\xi-\delta}\psi_{\delta}(\xi-\zeta)(\bar{f}^{\pm}_2(y,s,\xi)-\bar{f}^{\pm}_2(y,s,\zeta)) d\zeta d\xi dxdy\\ \notag
&&\ +\int_{(\mathbb{T}^N)^2}\int_{\mathbb{R}}\rho_{\gamma}(x-y)f^{\pm}_1(x,s,\xi)\int^{\xi+\delta}_{\xi}\psi_{\delta}(\xi-\zeta)(\bar{f}^{\pm}_2(y,s,\zeta)-\bar{f}^{\pm}_2(y,s,\xi)) d\zeta d\xi dxdy\\ \notag
&\leq&\int_{(\mathbb{T}^N)^2}\rho_{\gamma}(x-y)\int^{\delta}_0\psi_{\delta}(\zeta')\int_{\mathbb{R}}f^{\pm}_1(x,s,\xi)(\bar{f}^{\pm}_2(y,s,\xi)-\bar{f}^{\pm}_2(y,s,\xi-\zeta')) d\xi d\zeta' dxdy\\ \notag
&&\ +\int_{(\mathbb{T}^N)^2}\rho_{\gamma}(x-y)\int^{0}_{-\delta}\psi_{\delta}(\zeta')\int_{\mathbb{R}}f^{\pm}_1(x,s,\xi)(\bar{f}^{\pm}_2(y,s,\xi-\zeta')-\bar{f}^{\pm}_2(y,s,\xi)) d\xi d\zeta' dxdy\\ \notag
&\leq&\delta\int_{(\mathbb{T}^N)^2}\rho_{\gamma}(x-y)\Big(\int^{\delta}_0\psi_{\delta}(\zeta')d\zeta'\Big)\Big(\int_{\mathbb{R}}\nu^{2,\pm}_{y,s}(d\xi) \Big) dxdy\\ \notag
&&\ +\delta\int_{(\mathbb{T}^N)^2}\rho_{\gamma}(x-y)\Big(\int^{0}_{-\delta}\psi_{\delta}(\zeta')d\zeta'\Big)\Big(\int_{\mathbb{R}}\nu^{2,\pm}_{y,s}(d\xi) \Big) dxdy\\
\label{qeq-14}
&\leq & \frac{1}{2}\delta+\frac{1}{2}\delta=\delta,
\end{eqnarray}
where we have taken into account the facts that  $\bar{f}^{\pm}_2(y,s,\xi)$ is increasing in $\xi$, $f^{\pm}_1(x,s,\xi)\leq 1$ and $\int_{\mathbb{R}}\nu^{2,\pm}_{y,s}(d\xi)=1$.
Similarly,
\begin{eqnarray}\notag
&&\Big|\int_{(\mathbb{T}^N)^2}\int_{\mathbb{R}}\rho_{\gamma}(x-y)\bar{f}^{\pm}_1(x,s,\xi){f}^{\pm}_2(y,s,\xi)d\xi dxdy\\ \notag
&&\ -\int_{(\mathbb{T}^N)^2}\int_{\mathbb{R}^2}\bar{f}^{\pm}_1(x,s,\xi){f}^{\pm}_2(y,s,\zeta)\rho_{\gamma}(x-y)\psi_{\delta}(\xi-\zeta)dxdyd\xi d\zeta\Big|\\ \notag
&=&\Big|\int_{(\mathbb{T}^N)^2}\rho_{\gamma}(x-y)\int_{\mathbb{R}}\bar{f}^{\pm}_1(x,s,\xi)\int_{\mathbb{R}}\psi_{\delta}(\xi-\zeta)({f}^{\pm}_2(y,s,\xi)-{f}^{\pm}_2(y,s,\zeta))d\zeta d\xi dxdy\Big|\\ \notag
&\leq&\int_{(\mathbb{T}^N)^2}\rho_{\gamma}(x-y)\int_{\mathbb{R}}\bar{f}^{\pm}_1(x,s,\xi)\int^{\xi}_{\xi-\delta}\psi_{\delta}(\xi-\zeta)({f}^{\pm}_2(y,s,\zeta)-{f}^{\pm}_2(y,s,\xi)) d\zeta d\xi dxdy\\ \notag
&&\ +\int_{(\mathbb{T}^N)^2}\int_{\mathbb{R}}\rho_{\gamma}(x-y)\bar{f}^{\pm}_1(x,s,\xi)\int^{\xi+\delta}_{\xi}\psi_{\delta}(\xi-\zeta)({f}^{\pm}_2(y,s,\xi)-{f}^{\pm}_2(y,s,\zeta)) d\zeta d\xi dxdy\\ \notag
&\leq&\int_{(\mathbb{T}^N)^2}\rho_{\gamma}(x-y)\int^{\delta}_0\psi_{\delta}(\zeta')\int_{\mathbb{R}}\bar{f}^{\pm}_1(x,s,\xi)({f}^{\pm}_2(y,s,\xi-\zeta')-{f}^{\pm}_2(y,s,\xi)) d\xi d\zeta' dxdy\\ \notag
&&\ +\int_{(\mathbb{T}^N)^2}\rho_{\gamma}(x-y)\int^{0}_{-\delta}\psi_{\delta}(\zeta')\int_{\mathbb{R}}\bar{f}^{\pm}_1(x,s,\xi)({f}^{\pm}_2(y,s,\xi)-{f}^{\pm}_2(y,s,\xi-\zeta')) d\xi d\zeta' dxdy\\
\label{qeq-14-1}
&\leq & \frac{1}{2}\delta+\frac{1}{2}\delta=\delta.
\end{eqnarray}
Then, we deduce from (\ref{qeq-14}) and (\ref{qeq-14-1}) that
\begin{eqnarray}\notag
\tilde{K}_{2,2}&\leq& 2\delta\sqrt{D_1}\int^t_0|h(s)|_Uds\\ \notag
&& +\sqrt{D_1}\int^t_0|h(s)|_U\int_{(\mathbb{T}^N)^2}\int_{\mathbb{R}^2}(f^{\pm}_1\bar{f}^{\pm}_2+\bar{f}^{\pm}_1{f}^{\pm}_2)\rho_{\gamma}(x-y)\psi_{\delta}(\xi-\zeta)dxdyd\xi d\zeta ds\\
\notag
&\leq& 2\delta\sqrt{D_1}(T+M)\\
\label{qeq-16}
&&+\sqrt{D_1}\int^t_0|h(s)|_U\int_{(\mathbb{T}^N)^2}\int_{\mathbb{R}^2}(f^{\pm}_1\bar{f}^{\pm}_2+\bar{f}^{\pm}_1{f}^{\pm}_2)\rho_{\gamma}(x-y)\psi_{\delta}(\xi-\zeta)dxdyd\xi d\zeta ds.
\end{eqnarray}
Hence, combining (\ref{P-14}) and (\ref{qeq-16}), we get
\begin{eqnarray}\notag
\tilde{K}_2&\leq& \sqrt{D_1}(\gamma+2\delta)(T+M)\\
\label{qeq-17}
&&\ +\sqrt{D_1}\int^t_0|h(s)|_U\int_{(\mathbb{T}^N)^2}\int_{\mathbb{R}^2}(f^{\pm}_1\bar{f}^{\pm}_2+\bar{f}^{\pm}_1{f}^{\pm}_2)\rho_{\gamma}(x-y)\psi_{\delta}(\xi-\zeta)dxdyd\xi d\zeta ds.
\end{eqnarray}
Taking into account (\ref{equ-39}), we deduce that
\begin{eqnarray*}\notag
 &&\int_{(\mathbb{T}^N)^2}\int_{\mathbb{R}^2}\rho_\gamma (x-y)\psi_{\delta}(\xi-\zeta)(f^{\pm}_1(x,t,\xi)\bar{f}^{\pm}_2(y,t,\zeta)+\bar{f}^{\pm}_1(x,t,\xi){f}^{\pm}_2)(y,t,\zeta)d\xi d\zeta dxdy\\ \notag
&\leq& \int_{(\mathbb{T}^N)^2}\int_{\mathbb{R}^2}\rho_\gamma (x-y)\psi_{\delta}(\xi-\zeta)(f_{1,0}(x,\xi)\bar{f}_{2,0}(y,\zeta)+\bar{f}_{1,0}(x,\xi){f}_{2,0}(y,\zeta))d\xi d\zeta dxdy\\
&& +2TC_p \delta \gamma^{-1}+2\sqrt{D_1}(\gamma+2\delta)(T+M)\\
&& +2\sqrt{D_1}\int^t_0|h(s)|_U\int_{(\mathbb{T}^N)^2}\int_{\mathbb{R}^2}(f^{\pm}_1\bar{f}^{\pm}_2+\bar{f}^{\pm}_1{f}^{\pm}_2)\rho_{\gamma}(x-y)\psi_{\delta}(\xi-\zeta)dxdyd\xi d\zeta ds\\
&\leq& \int_{\mathbb{T}^N}\int_{\mathbb{R}}(f_{1,0}\bar{f}_{2,0}+\bar{f}_{1,0}{f}_{2,0})d\xi dx+\mathcal{E}_0(\gamma,\delta)+2TC_p \delta \gamma^{-1}+2\sqrt{D_1}(\gamma+\delta)(T+M)\\
&& +2\sqrt{D_1}\int^t_0|h(s)|_U\int_{(\mathbb{T}^N)^2}\int_{\mathbb{R}^2}(f^{\pm}_1\bar{f}^{\pm}_2+\bar{f}^{\pm}_1{f}^{\pm}_2)\rho_{\gamma}(x-y)\psi_{\delta}(\xi-\zeta)dxdyd\xi d\zeta ds,
\end{eqnarray*}
where $\mathcal{E}_0(\gamma,\delta)\rightarrow 0$, as $\gamma,\delta\rightarrow 0$.

Utilizing Gronwall inequality, we obtain
\begin{eqnarray}\notag
 &&\int_{(\mathbb{T}^N)^2}\int_{\mathbb{R}^2}\rho_\gamma (x-y)\psi_{\delta}(\xi-\zeta)(f^{\pm}_1(x,t,\xi)\bar{f}^{\pm}_2(y,t,\zeta)+\bar{f}^{\pm}_1(x,t,\xi){f}^{\pm}_2(y,t,\zeta))d\xi d\zeta dxdy\\ \notag
&\leq& \Big[\int_{\mathbb{T}^N}\int_{\mathbb{R}}(f_{1,0}\bar{f}_{2,0}+\bar{f}_{1,0}{f}_{2,0})d\xi dx+\mathcal{E}_0(\gamma,\delta)+2TC_p \delta \gamma^{-1}+2\sqrt{D_1}(\gamma+2\delta)(T+M)\Big]\\ \notag
&&\ \times\exp\Big\{2\sqrt{D_1}\int^t_0|h(s)|_Uds\Big\}\\
\notag
&\leq& e^{2\sqrt{D_1}(T+M)}\Big[\int_{\mathbb{T}^N}\int_{\mathbb{R}}(f_{1,0}\bar{f}_{2,0}+\bar{f}_{1,0}{f}_{2,0})d\xi dx+\mathcal{E}_0(\gamma,\delta)\Big]\\
\label{qeq-18}
&&\ +2e^{2\sqrt{D_1}(T+M)}[TC_p \delta \gamma^{-1}+\sqrt{D_1}(\gamma+2\delta)(T+M)].
\end{eqnarray}

%

Combining all the above estimates, it follows that
\begin{eqnarray}\notag
&&\int_{\mathbb{T}^N}\int_{\mathbb{R}}(f^{\pm}_1(x,t,\xi)\bar{f}^{\pm}_2(x,t,\xi)+\bar{f}^{\pm}_1(x,t,\xi){f}^{\pm}_2(x,t,\xi))dxd\xi\\ \notag
&=& \int_{(\mathbb{T}^N)^2}\int_{\mathbb{R}^2}\rho_\gamma (x-y)\psi_{\delta}(\xi-\zeta)(f^{\pm}_1(x,t,\xi)\bar{f}^{\pm}_2(y,t,\zeta)+\bar{f}^{\pm}_1(x,t,\xi){f}^{\pm}_2(y,t,\zeta))d\xi d\zeta dxdy+\mathcal{E}_t(\gamma,\delta)\\ \notag
&\leq& e^{2\sqrt{D_1}(T+M)}[\int_{\mathbb{T}^N}\int_{\mathbb{R}}(f_{1,0}\bar{f}_{2,0}+\bar{f}_{1,0}{f}_{2,0})d\xi dx+2\mathcal{E}_0(\gamma,\delta)]\\
\label{equ-39-1}
&&\ +2e^{2\sqrt{D_1}(T+M)}[TC_p \delta \gamma^{-1}+\sqrt{D_1}(\gamma+2\delta)(T+M)]+\mathcal{E}_t(\gamma,\delta),
\end{eqnarray}
where $\mathcal{E}_t(\gamma,\delta)\rightarrow 0$, as $\gamma,\delta\rightarrow 0$.

Taking $\delta=\gamma^{\frac{4}{3}}$ and letting $\gamma\rightarrow 0$ gives
\begin{eqnarray}\notag
&&\int_{\mathbb{T}^N}\int_{\mathbb{R}}(f^{\pm}_1(x,t,\xi)\bar{f}^{\pm}_2(x,t,\xi)+\bar{f}^{\pm}_1(x,t,\xi){f}^{\pm}_2(x,t,\xi))dxd\xi\\
\label{P-16}
&\leq& e^{2\sqrt{D_1}(T+M)}\int_{\mathbb{T}^N}\int_{\mathbb{R}}(f_{1,0}\bar{f}_{2,0}+\bar{f}_{1,0}{f}_{2,0})dxd\xi.
\end{eqnarray}
The reduction of generalized solutions to kinetic solutions is very similar to the proof of Theorem 15 in \cite{D-V-1}, we therefore omit it here.
Suppose that $u^1_h$ and $u^2_h$ are two kinetic solutions to (\ref{P-6}), using the following identities
\begin{eqnarray}\label{equ-1}
\int_{\mathbb{R}}I_{u^1_h>\xi}\overline{I_{u^2_h>\xi}}d\xi=(u^1_h-u^2_h)^+,
\quad
\int_{\mathbb{R}}\overline{I_{u^1_h>\xi}}I_{u^2_h>\xi}d\xi=(u^1_h-u^2_h)^-,
\end{eqnarray}
we deduce from (\ref{P-16}) with $f_i=I_{u^i_h>\xi}, f_{i,0}=I_{u_0>\xi}$ that
\begin{eqnarray*}\label{P-17}
\|u^1_h(t)-u^2_h(t)\|_{L^1(\mathbb{T}^N)}\leq e^{\sqrt{D_1}(T+M)}\|u_{0}-u_{0}\|_{L^1(\mathbb{T}^N)}=0.
\end{eqnarray*}
This gives the uniqueness.
\end{proof}

In view of Theorem \ref{thm-1} and Theorem \ref{thm-2}, we can define $\mathcal{G}^0: C([0,T];\mathcal{U})\rightarrow L^1([0,T];L^1(\mathbb{T}^N))$ by
\begin{eqnarray}
\mathcal{G}^0(\check{h}):=\left\{
                   \begin{array}{ll}
                      u_{h}, & {\rm{if}}\ \check{h}= \int^{\cdot}_0 h(s)ds, \ {\rm{for\ some}}\ h\in L^2([0,T];U),\\
                    0, & {\rm{otherwise}},
                   \end{array}
                  \right.
\end{eqnarray}
where $u_h$ is the solution of equation (\ref{P-2}).

\subsection{The continuity of the skeleton equations}
{In this part, we aim to prove the continuity of the mapping $\mathcal{G}^0$. Namely, let $u_{h^{\varepsilon}}$ denote the kinetic solution  of (\ref{P-2}) with $h$ replaced by $h^\varepsilon$ and we will show that  $u_{h^{\varepsilon}}$ converges to the kinetic solution $u_h$ of the skeleton equation (\ref{P-2}) in $L^1([0,T];L^1(\mathbb{T}^N))$, if $h^\varepsilon\rightarrow h$ weakly in $L^2([0,T];U)$. For technical reasons, we will introduce two auxiliary approximation processes.

}

For any family $\{h^\varepsilon,\varepsilon>0\}\subset S_M$ and $\eta>0$, let us consider the following parabolic approximation
\begin{eqnarray}\label{eqq-1}
\left\{
  \begin{array}{ll}
    du^{\eta}_{h^\varepsilon}-\eta\Delta u^{\eta}_{h^\varepsilon}dt+div(A(u^{\eta}_{h^\varepsilon}))dt=\Phi(u^{\eta}_{h^\varepsilon}) h^\varepsilon(t)dt,\\
    u^{\eta}_{h^\varepsilon}(0)=u_0.
  \end{array}
\right.
\end{eqnarray}

It is shown by Theorem 2.1 in \cite{GR00} that equation (\ref{eqq-1}) has a unique $L^{\varrho}(\mathbb{T}^N)-$valued solution provided $\varrho$ is large enough and $u_0\in L^{\varrho}(\mathbb{T}^N)$, hence in particular for $u_0\in L^{\infty}(\mathbb{T}^N)$. We denote by $u^{\eta}_{h^\varepsilon}$ the solution of (\ref{eqq-1}).

Furthermore, for any $R\in \mathbb{N}$, we approximate operator $A$ in (\ref{eqq-1}) by Lipschitz continuous operator $A^{R}$ using the method of truncation. Consider the following approximation
\begin{eqnarray}\label{eqq-12}
\left\{
  \begin{array}{ll}
    du^{\eta,R}_{h^\varepsilon}-\eta\Delta u^{\eta,R}_{h^\varepsilon}dt+div(A^{R}(u^{\eta,R}_{h^\varepsilon}))dt=\Phi(u^{\eta,R}_{h^\varepsilon}) h^\varepsilon(t)dt,\\
    u^{\eta,R}_{h^\varepsilon}(0)=u_0,
  \end{array}
\right.
\end{eqnarray}
where $A^{R}$ is Lipschitz continuous hence it has linear growth $|A^{R}(\xi)|\leq C(R)(1+|\xi|)$.

Referring to Proposition 5.1 in \cite{DHV}, we have
\begin{eqnarray}\label{eqq-3}
\sup_{\varepsilon}\Big\{\sup_{t\in[0,T]}\|u^{\eta,R}_{h^\varepsilon}\|^2_H +\int^T_0\|\nabla u^{\eta,R}_{h^\varepsilon}(s)\|^2_Hds\Big\}\leq C(M, \|u_0\|_H),
\end{eqnarray}
where the constant $C$ is independent of $\varepsilon$ and $R$.

Following the same arguments as the proof of Theorem 5.2 in \cite{DHV}, for every $\eta>0$, it can be shown  that
\begin{eqnarray}\label{eqq-13}
\lim_{R\rightarrow +\infty}\sup_{\varepsilon>0}\int^T_0\|u^{\eta,R}_{h^\varepsilon}(t)-u^{\eta}_{h^\varepsilon}(t)\|^2_{H}dt=0.
\end{eqnarray}

{
With the above two approximation processes (\ref{eqq-1}) and (\ref{eqq-12}), for any $\varepsilon, \eta, R>0$, we have
\begin{eqnarray*}
&&\|u_{h^{\varepsilon}}-u_h\|_{L^1([0,T];L^1(\mathbb{T}^N))}\\ \notag
&\leq& \|u^{\eta}_{h^{\varepsilon}}-u_{h^{\varepsilon}}\|_{L^1([0,T];L^1(\mathbb{T}^N))}
+\|u^{\eta}_{h^{\varepsilon}}-u^{\eta,R}_{h^{\varepsilon}}\|_{L^1([0,T];L^1(\mathbb{T}^N))}
+\|u^{\eta,R}_{h^{\varepsilon}}-u^{\eta,R}_{h}\|_{L^1([0,T];L^1(\mathbb{T}^N))}\\
&&\
+\|u^{\eta,R}_{h}-u^{\eta}_{h}\|_{L^1([0,T];L^1(\mathbb{T}^N))}
+\|u^{\eta}_h-u_h\|_{L^1([0,T];L^1(\mathbb{T}^N))}.
\end{eqnarray*}
In order to establish the continuity of the skeleton equations, we need several steps.
}

Firstly, we prove the compactness of $\{u^{\eta,R}_{h^\varepsilon}, \varepsilon> 0\}$. {For simplicity,} { we set $u^{\eta,R}_{\varepsilon}:=u^{\eta,R}_{h^\varepsilon}$.}

As in \cite{FG95}, we introduce the following space. Let $K$ be a separable Banach space with the norm $\|\cdot\|_K$.
Given $p>1, \alpha\in (0,1)$, let $W^{\alpha,p}([0,T]; K)$ be the Sobolev space of all functions $u\in L^p([0,T];K)$ such that
\[
\int^T_0\int^T_0\frac{\|u(t)-u(s)\|_K^{ p}}{|t-s|^{1+\alpha p}}dtds< \infty,
\]
which is then endowed with the norm
\[
\|u\|^p_{W^{\alpha,p}([0,T]; K)}=\int^T_0\|u(t)\|_K^pdt+\int^T_0\int^T_0\frac{\|u(t)-u(s)\|_K^{ p}}{|t-s|^{1+\alpha p}}dtds.
\]
The following result can be found in \cite{FG95}.
\begin{lemma}\label{lem-1-1}
Let $B_0\subset B\subset B_1$ be three Banach spaces. Assume that both $B_0$ and $B_1$ are reflexive, and $B_0$ is compactly embedded in $B$. Let $p\in (1, \infty)$ and $\alpha \in (0, 1)$ be given. Let $\Lambda$ be the space
\[
\Lambda:= L^p([0, T]; B_0)\cap W^{\alpha, p}([0,T]; B_1)
\]
endowed with the natural norm. Then the embedding of $\Lambda$ in $L^p([0,T];B)$ is compact.

\end{lemma}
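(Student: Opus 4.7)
The plan is to reduce compactness of $\Lambda \hookrightarrow L^p([0,T];B)$ to a vector-valued Kolmogorov–Riesz–Fréchet criterion in $L^p([0,T];B_1)$, by first trading the target space $B$ against $B_0$ and $B_1$ through an Ehrling-type interpolation, and then extracting a Cauchy subsequence in $L^p([0,T];B_1)$ using the fractional time regularity encoded by $W^{\alpha,p}([0,T];B_1)$ together with the compact embedding $B_0 \hookrightarrow\hookrightarrow B$. Fix a bounded sequence $\{u_n\}\subset \Lambda$ with $\|u_n\|_{\Lambda}\leq M$; the goal is to extract a subsequence converging in $L^p([0,T];B)$.

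For the first step, since $B_0$ embeds compactly into $B$ and $B$ embeds continuously into $B_1$, Ehrling's lemma supplies, for every $\eta>0$, a constant $C_\eta>0$ such that $\|v\|_B \leq \eta \|v\|_{B_0} + C_\eta \|v\|_{B_1}$ for all $v\in B_0$. Taking the $L^p$-norm in time gives
$$\|u_n - u_m\|_{L^p([0,T];B)} \leq 2\eta M + C_\eta \|u_n - u_m\|_{L^p([0,T];B_1)}.$$
Letting $\eta\to 0$ after a standard diagonal extraction reduces the claim to producing a subsequence that is Cauchy in $L^p([0,T];B_1)$.

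For the second step, I would invoke the vector-valued Kolmogorov–Riesz–Fréchet criterion in $L^p([0,T];B_1)$, whose hypotheses are: uniform $L^p([0,T];B_1)$-boundedness (clear via $B_0\hookrightarrow B_1$), uniform equicontinuity under time translations, and pointwise-in-$t$ precompactness of $\{u_n(t)\}$ in $B_1$. The equicontinuity is precisely the content of the Gagliardo seminorm of $W^{\alpha,p}$: a standard computation yields
$$\sup_n \int_0^{T-h}\|u_n(t+h)-u_n(t)\|_{B_1}^p\, dt \;\leq\; C_{\alpha,p}\, h^{\alpha p}\, [u_n]_{W^{\alpha,p}([0,T];B_1)}^p \;\leq\; C'_{\alpha,p} M^p h^{\alpha p},$$
which vanishes as $h\to 0$ uniformly in $n$.

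The main obstacle is the pointwise precompactness, since a bound in $L^p([0,T];B_0)$ does not furnish a uniform $B_0$-bound on $u_n(t)$ for individual values of $t$. The remedy is a time-mollification: set $u_n^\varepsilon = \rho_\varepsilon \ast_t u_n$ with a smooth even kernel $\rho_\varepsilon$ on $\mathbb{R}$ (after extending $u_n$ by reflection across $\{0,T\}$); by Hölder's inequality, $\sup_{n,t}\|u_n^\varepsilon(t)\|_{B_0}\leq C(\varepsilon)M$, so each slice $\{u_n^\varepsilon(t)\}_n$ is precompact in $B_1$ via $B_0\hookrightarrow\hookrightarrow B\hookrightarrow B_1$. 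The approximation error $\|u_n - u_n^\varepsilon\|_{L^p([0,T];B_1)}$ is then controlled by the time-translation estimate above, uniformly in $n$, and tends to $0$ as $\varepsilon\to 0$. A standard diagonal argument (extract a subsequence Cauchy in $L^p([0,T];B_1)$ for each fixed $\varepsilon$ along a countable dense set of times via Arzelà–Ascoli, then let $\varepsilon\to 0$) produces a subsequence Cauchy in $L^p([0,T];B_1)$, which, combined with the first step, yields a subsequence Cauchy in $L^p([0,T];B)$ and completes the argument.
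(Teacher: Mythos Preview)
The paper does not prove this lemma; it is stated as a known result taken from Flandoli and Gatarek \cite{FG95} (see the sentence ``The following result can be found in \cite{FG95}'' immediately preceding the statement), and no argument is supplied. Your sketch is a correct proof and is essentially the standard Aubin--Lions--Simon argument: Ehrling interpolation to trade $B$ for $B_1$, time mollification so that each slice of the regularized family is bounded in $B_0$ and hence precompact in $B\hookrightarrow B_1$, Arzel\`a--Ascoli on the mollified family in $C([0,T];B_1)$, uniform control of the mollification error via the translation estimate $\int_0^{T-h}\|u(t+h)-u(t)\|_{B_1}^p\,dt\le C h^{\alpha p}[u]_{W^{\alpha,p}([0,T];B_1)}^p$, and a diagonal extraction. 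One cosmetic remark: once mollified, each $u_n^\varepsilon$ is continuous (indeed Lipschitz, uniformly in $n$ for fixed $\varepsilon$) with values in $B_1$, so Arzel\`a--Ascoli applies directly in $C([0,T];B_1)$; the reference to a ``countable dense set of times'' is unnecessary.
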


We then have the following result.

\begin{prp}\label{prpp-1}
For any $\eta, R>0$, $\{u^{\eta,R}_{\varepsilon}, \varepsilon> 0\}$ is compact in $L^2([0,T];H)$.
\end{prp}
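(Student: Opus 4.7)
The plan is to apply Lemma \ref{lem-1-1} with the Gelfand triple $B_0 = H^1(\mathbb{T}^N) \subset B = H \subset B_1 = H^{-1}(\mathbb{T}^N)$ and $p=2$; since $H^1$ embeds compactly into $H$ by the Rellich--Kondrachov theorem, it suffices to establish uniform bounds (independent of $\varepsilon$) for $\{u^{\eta,R}_{\varepsilon}\}$ in both $L^2([0,T];H^1)$ and $W^{\alpha,2}([0,T];H^{-1})$ for some $\alpha \in (0,1)$.

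The first bound is immediate from the energy estimate \eqref{eqq-3}, which yields
\[
\sup_{\varepsilon>0}\int_0^T \|u^{\eta,R}_{\varepsilon}(t)\|_{H^1}^2\,dt \leq C(\eta,M,\|u_0\|_H,T).
\]
For the second bound, I would rewrite \eqref{eqq-12} in mild form and estimate the time derivative $\partial_t u^{\eta,R}_{\varepsilon}$ in $H^{-1}$, term by term. The viscous term contributes $\|\eta\Delta u^{\eta,R}_{\varepsilon}\|_{H^{-1}} \leq \eta\|\nabla u^{\eta,R}_{\varepsilon}\|_H$, which is in $L^2([0,T])$ uniformly by \eqref{eqq-3}. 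The flux term satisfies $\|\mathrm{div}(A^R(u^{\eta,R}_{\varepsilon}))\|_{H^{-1}} \leq \|A^R(u^{\eta,R}_{\varepsilon})\|_H \leq C(R)(1+\|u^{\eta,R}_{\varepsilon}\|_H)$, uniformly bounded in $L^\infty([0,T])$ by \eqref{eqq-3}, where the Lipschitz/linear-growth property of $A^R$ is crucial. For the forcing term, using \eqref{equ-30} and $h^\varepsilon \in S_M$,
\[
\int_0^T \|\Phi(u^{\eta,R}_{\varepsilon}(r))h^\varepsilon(r)\|_{H^{-1}}^2\,dr \leq \int_0^T D_0(1+\|u^{\eta,R}_{\varepsilon}(r)\|_H^2)\,|h^\varepsilon(r)|_U^2\,dr \leq C(M,\|u_0\|_H).
\]
Combining these three estimates yields $\sup_\varepsilon \|\partial_t u^{\eta,R}_{\varepsilon}\|_{L^2([0,T];H^{-1})} < \infty$, so the family is bounded in $W^{1,2}([0,T];H^{-1})$, which in turn embeds continuously into $W^{\alpha,2}([0,T];H^{-1})$ for every $\alpha \in (0,1)$.

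With both bounds in hand, Lemma \ref{lem-1-1} directly gives that $\{u^{\eta,R}_{\varepsilon}, \varepsilon>0\}$ is relatively compact in $L^2([0,T];H)$, as claimed. The only mildly delicate point is the dependence of constants on $R$ in the flux estimate; since $R$ is fixed in this proposition, this causes no issue, but I expect it to matter later when removing the truncation via \eqref{eqq-13}. Overall the proof is a fairly standard compactness argument in the Flandoli--Gatarek spirit, with the main work being the termwise $H^{-1}$-estimate of $\partial_t u^{\eta,R}_{\varepsilon}$ and the invocation of the uniform energy bound \eqref{eqq-3}.
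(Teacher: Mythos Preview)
Your proposal is correct and follows essentially the same approach as the paper: both apply Lemma~\ref{lem-1-1} with $B_0=H^1$, $B=H$, $B_1=H^{-1}$, $p=2$, use the energy estimate~\eqref{eqq-3} for the $L^2([0,T];H^1)$ bound, and control time regularity in $H^{-1}$ by estimating the three terms (viscous, truncated flux, forcing) separately. The only minor difference is that you bound $\partial_t u^{\eta,R}_\varepsilon$ directly in $L^2([0,T];H^{-1})$ to obtain a $W^{1,2}$ bound, whereas the paper estimates the increments $\|I_i^\varepsilon(t)-I_i^\varepsilon(s)\|_{H^{-1}}$ to get a $W^{\alpha,2}$ bound for $\alpha\in(0,\tfrac12)$; your route is slightly more streamlined and yields a stronger intermediate estimate, but the substance is identical.
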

\begin{proof}
From (\ref{eqq-12}), $ u^{\eta,R}_{\varepsilon}$ can be written as
\begin{eqnarray*}
 u^{\eta,R}_{\varepsilon}(t)&=&u_0+\eta\int^t_0 \Delta u^{\eta,R}_{\varepsilon} ds -\int^t_0div(A^{R}(u^{\eta,R}_{\varepsilon}(s)))ds+\int^t_0\Phi(u^{\eta,R}_{\varepsilon}) h^{\varepsilon}(s)ds\\
 &=:& I^\varepsilon_1+I^\varepsilon_2+I^\varepsilon_3+I^\varepsilon_4.
\end{eqnarray*}
Clearly, $\|I^\varepsilon_1\|_{H}\leq C_1$. Next,
\begin{eqnarray*}
\| -\Delta u^{\eta,R}_{\varepsilon} \|_{H^{-1}}
&=& \sup_{\|v\|_{H^1}\leq 1}|\langle v,-\Delta u^{\eta,R}_{\varepsilon}\rangle|\\
&=& \sup_{\|v\|_{H^1}\leq 1}|\langle \nabla v, \nabla u^{\eta,R}_{\varepsilon}\rangle|\\
&\leq& C\|\nabla u^{\eta,R}_{\varepsilon}\|_H
\end{eqnarray*}
which then yields the following
\begin{eqnarray*}
\|I^\varepsilon_2(t)-I^\varepsilon_2(s)\|^2_{H^{-1}}
&=&\eta\|\int^t_s-\Delta u^{\eta,R}_{\varepsilon}(l)dl \|^2_{H^{-1}}\\
&\leq& C(t-s)\int^t_s\|-\Delta u^{\eta,R}_{\varepsilon}(l)\|^2_{H^{-1}}dl\\
&\leq& C(t-s)\int^t_s\|\nabla u^{\eta,R}_{\varepsilon}(l)\|^2_Hdl.
\end{eqnarray*}
Hence, by (\ref{eqq-3}), we have for $\alpha\in (0,\frac{1}{2})$,
\begin{eqnarray*}\notag
&&\sup_\varepsilon\|I^\varepsilon_2\|^2_{W^{\alpha,2}([0,T];H^{-1}(\mathbb{T}^N))}\\ \notag
&\leq&\int^T_0 \|I^\varepsilon_2(t)\|^2_{H^{-1}}dt+\int^T_0\int^T_0\frac{\|I^\varepsilon_2(t)-I^\varepsilon_2(s)\|^2_{H^{-1}}}{|t-s|^{1+2\alpha}}dsdt\\
&\leq& C_2(\alpha).
\end{eqnarray*}
By integration by parts formula and the linear growth of $A^{R}$, we have
\begin{eqnarray*}
\|div(A^{R}(u^{\eta,R}_{\varepsilon}(s)))\|_{H^{-1}}&=& \sup_{\|v\|_{H^1}\leq 1}|\langle v,div(A^{R}(u^{\eta,R}_\varepsilon(s)))\rangle|\\
&=& \sup_{\|v\|_{H^1}\leq 1}|\langle \nabla v, A^{R}(u^{\eta,R}_\varepsilon(s))\rangle|\\
&\leq& C(R)\sup_{\|v\|_{H^1}\leq 1}\int_{\mathbb{T}^N} |\nabla v|(1+ |u^{\eta,R}_\varepsilon(s)|)dx\\
&\leq& C(R)(1+\|u^{\eta,R}_\varepsilon(s)\|^2_{H})
\end{eqnarray*}
which gives that
\begin{eqnarray*}
\|I^\varepsilon_3(t)-I^\varepsilon_3(s)\|^2_{H^{-1}}
&=&\|\int^t_sdiv(A^{R}(u^{\eta,R}_\varepsilon(l)))dl \|^2_{H^{-1}}\\
&\leq& C(R)(t-s)\int^t_s\|div(A^{R}(u^{\eta,R}_\varepsilon(l)))\|^2_{H^{-1}}dl\\
&\leq& C(R)(t-s)\int^t_s(1+\|u^{\eta,R}_\varepsilon(l)\|^2_H)dl\\
&\leq& C(R)(t-s)^2[1+\sup_{t\in[0,T]}\|u^{\eta,R}_\varepsilon(t)\|^2_H].
\end{eqnarray*}
Hence, we deduce from (\ref{eqq-3}) that for $\alpha\in (0,\frac{1}{2})$,
\begin{eqnarray*}\notag
&&\sup_\varepsilon\|I^\varepsilon_3\|^2_{W^{\alpha,2}([0,T];H^{-1}(\mathbb{T}^N))}\\ \notag
&\leq&\int^T_0 \|I^\varepsilon_3(t)\|^2_{H^{-1}}dt+\int^T_0\int^T_0\frac{\|I^\varepsilon_3(t)-I^\varepsilon_3(s)\|^2_{H^{-1}}}{|t-s|^{1+2\alpha}}dsdt\\
&\leq& C_3(\alpha).
\end{eqnarray*}
Moreover, by (\ref{equ-30}), it follows that
\begin{eqnarray*}
\|\Phi(u^{\eta,R}_\varepsilon) h^{\varepsilon}(l)\|^2_{H}&\leq& \|\Phi(u^{\eta,R}_\varepsilon)\|^2_{\mathcal{L}_2(U,H)}|h^{\varepsilon}(l)|^2_U\\
&\leq& D_0(1+\|u^{\eta,R}_\varepsilon\|^2_H)|h^{\varepsilon}(l)|^2_U,
\end{eqnarray*}
then, by H\"{o}lder inequality, we get
\begin{eqnarray*}
\|I^\varepsilon_4(t)-I^\varepsilon_4(s)\|^2_{H}
&=&\|\int^t_s\Phi(u^{\eta,R}_\varepsilon) h^{\varepsilon}(l)dl \|^2_{H}\\
&\leq& (t-s)\int^t_s\|\Phi(u^{\eta,R}_\varepsilon) h^{\varepsilon}(l)\|^2_{H}dl\\
&\leq& D_0(t-s)(1+\sup_{t\in[0,T]}\|u^{\eta,R}_\varepsilon(t)\|^2_H)\int^t_s|h^{\varepsilon}(l)|^2_Udl\\
&\leq& D_0M(t-s)(1+\sup_{t\in[0,T]}\|u^{\eta,R}_\varepsilon(t)\|^2_H).
\end{eqnarray*}
Thus, we deduce from (\ref{eqq-3}) that for $\alpha\in (0,\frac{1}{2})$,
\begin{eqnarray*}\notag
&&\sup_\varepsilon\|I^\varepsilon_4\|^2_{W^{\alpha,2}([0,T];H)}\\ \notag
&\leq&\int^T_0 \|I^\varepsilon_4(t)\|^2_{H}dt+\int^T_0\int^T_0\frac{\|I^\varepsilon_4(t)-I^\varepsilon_4(s)\|^2_{H}}{|t-s|^{1+2\alpha}}dsdt\\
&\leq& C_4(\alpha).
\end{eqnarray*}
Collecting the above estimates, we conclude that for $\alpha\in (0,\frac{1}{2})$,
\begin{eqnarray*}\notag
\sup_\varepsilon\|u^{\eta,R}_\varepsilon\|^2_{W^{\alpha,2}([0,T];H^{-1}(\mathbb{T}^N))}\leq C(\alpha).
\end{eqnarray*}
Applying (\ref{eqq-3}) and Lemma \ref{lem-1-1}, we obtain the desired result.
\end{proof}

Furthermore, we apply the doubling of variables method to obtain the uniform convergence of the sequence $\{u^{\eta}_h, \eta>0\}$ to $u_h$ over $S_M$.

\begin{prp}\label{prp-4}
We have
\begin{eqnarray*}
\lim_{\eta\rightarrow 0}\sup_{h\in S_M}\|u^{\eta}_h-u_h\|_{L^1([0,T];L^1(\mathbb{T}^N))}=0.
\end{eqnarray*}

\end{prp}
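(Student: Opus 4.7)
The plan is to extend the doubling-of-variables argument of Proposition \ref{prp-1} to compare the vanishing-viscosity solution $u^{\eta}_h$ with the kinetic solution $u_h$ of the skeleton equation \eqref{P-2}, and then to show that all the resulting error contributions vanish as $\eta\to 0$ with a rate that depends only on $M$, $T$ and the structure constants $D_0,D_1,C_p$, but not on the particular $h\in S_M$. As a first step I would derive a kinetic formulation for $u^{\eta}_h$ along the lines of Proposition \ref{prppp-1}: applying the chain rule to $\theta(u^{\eta}_h)$ and decomposing $\theta'(u^{\eta}_h)\Delta u^{\eta}_h=\Delta\theta(u^{\eta}_h)-\theta''(u^{\eta}_h)|\nabla u^{\eta}_h|^2$, one sees that $f^{\eta}_h:=I_{u^{\eta}_h>\xi}$ satisfies the generalized kinetic formulation \eqref{P-4} with the additional viscous term $\eta\int^{T}_{0}\langle f^{\eta}_h,\Delta\varphi\rangle dt$ on the left-hand side and with kinetic measure $m^{\eta}_h=\eta|\nabla u^{\eta}_h|^{2}\delta_{u^{\eta}_h=\xi}$. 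Standard energy estimates (analogous to \eqref{eqq-3}, after letting $R\to\infty$) give $\eta\int^T_0\|\nabla u^{\eta}_h(s)\|^2_{H}ds\le C(M,\|u_0\|_H)$ together with an $L^\infty$ cap $\|u^{\eta}_h\|_{L^\infty([0,T]\times\mathbb{T}^N)}\le C(\|u_0\|_\infty,M,T)$, both uniform in $\eta>0$ and $h\in S_M$.

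The main step is to redo the doubling-of-variables computation between $f^{\eta}_h(x,t,\xi)$ and $f_h(y,t,\zeta)$ with test function $\alpha=\rho_\gamma(x-y)\psi_\delta(\xi-\zeta)$. The computation mirrors the proof of Theorem \ref{thm-2}, except that the equation for $f^{\eta}_h$ contributes two extra terms in the inequality. First, a viscous drift term
\begin{equation*}
I^{\eta}_{\mathrm{visc}}=\eta\int^t_0\int_{(\mathbb{T}^N)^2}\int_{\mathbb{R}^2}f^{\eta}_h\,\bar f_h\,\Delta\rho_\gamma(x-y)\,\psi_\delta(\xi-\zeta)\,d\xi\,d\zeta\,dx\,dy\,ds,
\end{equation*}
which, using the identity $\int_{\mathbb{R}^2}\psi_\delta(\xi-\zeta)f^{\eta}_h\bar f_h\,d\xi d\zeta\le \|u^{\eta}_h\|_\infty+\|u_h\|_\infty\le 2C_\infty$ together with $\|\Delta\rho_\gamma\|_{L^1(\mathbb{T}^N)}\le C\gamma^{-2}$, is bounded by $C\eta T\gamma^{-2}$. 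Second, additional measure contributions involving $m^{\eta}_h$ appear, and these are non-positive by the same sign argument used for $I_4,I_5$ in \eqref{P-10} (cf.\ Proposition 13 of \cite{D-V-1}). Combining these with the flux estimate $|\tilde K_1|+|\tilde{\bar K}_1|\le 2TC_p\delta\gamma^{-1}$ coming from \eqref{qeq-22} and the noise estimate $\tilde K_2\le \sqrt{D_1}(\gamma+2\delta)(T+M)+\sqrt{D_1}\int^t_0|h(s)|_U F(s)\,ds$ coming from \eqref{equ-29} (both already established in Theorem \ref{thm-2}), and noting that the initial term vanishes since both equations share the datum $u_0$, one arrives at
\begin{equation*}
F(t)\le \mathcal{E}_0(\gamma,\delta)+C_1\delta\gamma^{-1}+C_2(\gamma+\delta)(T+M)+C_3\eta T\gamma^{-2}+2\sqrt{D_1}\int^t_0|h(s)|_U F(s)\,ds,
\end{equation*}
where $F(t)$ is the doubled cross term analogous to the left-hand side of \eqref{equ-39} but between $f^\eta_h$ and $f_h$, and $\mathcal{E}_0(\gamma,\delta)\to 0$ as $\gamma,\delta\to 0$.

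Since $\sup_{h\in S_M}\int^T_0|h(s)|_U ds\le \sqrt{TM}$ by Cauchy--Schwarz, Gronwall's inequality then yields
\begin{equation*}
\sup_{h\in S_M} F(t)\le e^{2\sqrt{D_1 TM}}\bigl[\mathcal{E}_0(\gamma,\delta)+C_1\delta\gamma^{-1}+C_2(\gamma+\delta)(T+M)+C_3\eta T\gamma^{-2}\bigr],
\end{equation*}
with all constants independent of $h$. Choosing $\delta=\gamma^{4/3}$ as in Theorem \ref{thm-2} and $\gamma=\eta^{1/3}$ (so that $\eta\gamma^{-2}=\eta^{1/3}\to 0$) forces every term on the right-hand side to vanish as $\eta\to 0$. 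Passing $\gamma,\delta\to 0$ and using the identities \eqref{equ-1} converts the left-hand side into $\|u^{\eta}_h(t)-u_h(t)\|_{L^1(\mathbb{T}^N)}$ (up to an error $\mathcal{E}_t(\gamma,\delta)\to 0$), and integrating in $t\in[0,T]$ delivers the claimed convergence uniformly in $h\in S_M$.

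The hard part will be to establish cleanly the kinetic formulation of $u^{\eta}_h$ together with the non-positivity of the new viscous measure terms in the doubled weak formulation; this is the viscous analogue of Proposition 13 of \cite{D-V-1} and requires careful bookkeeping of signs in the doubled integration-by-parts with respect to $\xi$ and $\zeta$. A secondary technical point is verifying that the $L^\infty$ bound on $u^{\eta}_h$ and the viscous energy estimate truly hold with constants depending on $h$ only through $M$; this is precisely where the restriction to the sublevel set $S_M$ of the Cameron--Martin ball is essential, as it furnishes $\int^T_0|h(s)|^2_U ds\le M$, which propagates through every estimate above.
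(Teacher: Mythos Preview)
Your overall strategy is the same as the paper's: derive a kinetic formulation for $u^\eta_h$, run the doubling-of-variables comparison between $f^\eta_h$ and $f_h$ with $\alpha=\rho_\gamma(x-y)\psi_\delta(\xi-\zeta)$, bound the flux, forcing, and new viscous contributions, apply Gronwall, and choose $\delta=\gamma^{4/3}$, $\gamma=\eta^{1/3}$. The sign arguments for the kinetic-measure terms and the estimate $|I^\eta_{\mathrm{visc}}|\le C\eta T\gamma^{-2}$ are correct in spirit (the paper reaches the same bound via $l(\xi,\zeta)\le C(|\xi|+|\zeta|+\delta)$ and moment estimates rather than an $L^\infty$ cap, which avoids having to prove the uniform $L^\infty$ bound you assert but do not justify).

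There is, however, a genuine gap in your last paragraph. You write that ``passing $\gamma,\delta\to 0$ \ldots\ converts the left-hand side into $\|u^\eta_h(t)-u_h(t)\|_{L^1}$ (up to an error $\mathcal{E}_t(\gamma,\delta)\to 0$)''. But $\gamma,\delta$ have already been tied to $\eta$, and the error between the doubled integral $F(t)$ and the diagonal $L^1$ norm depends on $u^\eta_h$: it is controlled by
\[
\int_{(\mathbb{T}^N)^2}\rho_\gamma(x-y)\,|u^{\eta,\pm}_h(x,t)-u^{\eta,\pm}_h(y,t)|\,dx\,dy,
\]
so the correct notation is $\mathcal{E}_t(\eta,\gamma,\delta)$. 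You must therefore show that this quantity tends to zero as $\eta\to 0$ (with $\gamma=\eta^{1/3}$) \emph{uniformly in $h\in S_M$}. A uniform $L^\infty$ bound alone does not yield this; it is an equicontinuity statement for the family $\{u^\eta_h\}$. The paper closes this gap by a bootstrap: it applies the doubling argument a second time, now to $f^\eta_h$ against itself. In that self-comparison the viscous cross terms combine to have a good sign (this is the content of $J^\sharp\le 0$, cf.\ Proposition 6.1 of \cite{DHV}), so one obtains
\[
\int_{(\mathbb{T}^N)^2}\rho_\gamma(x-y)\,|u^{\eta}_h(x,t)-u^{\eta}_h(y,t)|\,dx\,dy
\le C(T,M)\bigl[\,|\mathcal{E}_0(\gamma,\delta)|+\delta\gamma^{-1}+(\gamma+\delta)\,\bigr]+2\delta,
\]
with no $\eta$ on the right-hand side. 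This is the step that makes $\mathcal{E}_t$ effectively independent of $\eta$ and lets the final limit go through uniformly in $h$; your proposal omits it.
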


\begin{proof}
{For any $h\in S_M$, we consider the kinetic solution $f_1(x,t,\xi)=I_{u_h(x,t)>\xi}$ of the skeleton equation (\ref{P-2}) with the corresponding kinetic measure ${m}_1$. As the proof of (\ref{P-5}), for $\varphi_1\in C^{1}_c(\mathbb{T}^N_x\times \mathbb{R}_\xi)$, we have
\begin{eqnarray}\notag
\langle {f}^{+}_1(t), \varphi_1\rangle&=&\langle {f}_{1,0}, \varphi_1\rangle +\int^t_0\langle {f}_1(s), a(\xi)\cdot \nabla_x \varphi_1 (x,\xi)\rangle ds\\
\label{eqq-2}
&& +\sum_{k\geq 1}\int^t_0\int_{\mathbb{T}^N}\int_{\mathbb{R}}\varphi_1(x,\xi) g_{k}(x,\xi)h^k(s)d\nu^1_{x,s}(\xi)dxds-\langle m_1,\partial_{\xi} \varphi_1\rangle([0,t]),
\end{eqnarray}
where $f_{1,0}=I_{u_0>\xi}$ and $\nu^1_{x,s}(\xi)=\partial_{\xi}\bar{f}_1(s,x,\xi)=-\partial_{\xi}{f}_1(s,x,\xi)=\delta_{u_h=\xi}$.
 From Section 2.2 and 2.3 in \cite{DHV}, we know that there exists a kinetic measure $m^{\eta}_2$ such that the kinetic solution $f^{\eta}_2(y,t,\zeta)=I_{u^{\eta}_h(y,t)>\zeta}$ of equation (\ref{eqq-1}) with $h^\varepsilon$ replaced by $h$ satisfies that
for $\varphi_2\in C^{1}_c(\mathbb{T}^N_y\times \mathbb{R}_\zeta)$,
\begin{eqnarray}\notag
\langle \bar{f}^{\eta,+}_2(t),\varphi_2\rangle&=&\langle \bar{f}_{2,0}, \varphi_2\rangle+\int^t_0\langle \bar{f}^{\eta}_2(s), a(\zeta)\cdot \nabla_y \varphi_2+ \eta\Delta_y\varphi_2(s)\rangle ds\\ \notag
&&\ -\sum_{k\geq 1}\int^t_0\int_{\mathbb{T}^N}\int_{\mathbb{R}} g_{k}(y,\zeta)\varphi_2 (y,\zeta)h^k(s)d\nu^{2,\eta}_{y,s}(\zeta)dyds \\
\label{qeq-27}
&& +\int^t_0\int_{\mathbb{T}^N}\int_{\mathbb{R}}\partial_{\zeta} \varphi_2(y,\zeta)dq^{\eta}(y,s,\zeta)+\langle m^{\eta}_2,\partial_{\zeta} \varphi_2\rangle([0,t]),
\end{eqnarray}
where $f_{2,0}=I_{u_0>\zeta}$, $q^{\eta}=\eta|\nabla u^{\eta}_h|^2\delta_{u^{\eta}_h=\zeta}$ and  $\nu^{2,\eta}_{y,s}(\zeta)=-\partial_{\zeta}f^{\eta}_2(s,y,\zeta)=\partial_{\zeta}\bar{f}^{\eta}_2(s,y,\zeta)=\delta_{u^{\eta}_h=\zeta}$.

}

Setting $\alpha(x,\xi,y,\zeta)=\varphi_1(x,\xi)\varphi_2(y,\zeta)$, by the same method as Proposition \ref{prp-1}, we have
\begin{eqnarray*}\notag
\langle\langle f^{+}_1(t)\bar{f}^{\eta,+}_2(t), \alpha \rangle\rangle
&=& \langle\langle f_{1,0}\bar{f}_{2,0}, \alpha \rangle\rangle+\int^t_0\int_{(\mathbb{T}^N)^2}\int_{\mathbb{R}^2}f_1\bar{f}^{\eta}_2(a(\xi)\cdot \nabla_x+a(\zeta)\cdot \nabla_y) \alpha d\xi d\zeta dxdyds\\
&& -\sum_{k\geq 1}\int^t_0\int_{(\mathbb{T}^N)^2}\int_{\mathbb{R}^2}f_1(s,x,\xi)\alpha g_{k}(y,\zeta)h^k(s)d\xi d\nu^{2,\eta}_{y,s}(\zeta)dxdyds\\ \notag
 && +\sum_{k\geq 1}\int^t_0\int_{(\mathbb{T}^N)^2}\int_{\mathbb{R}^2}\bar{f}^{\eta}_2(s,y,\zeta)\alpha g_{k}(x,\xi)h^k(s)d\zeta d\nu^{1}_{x,s}(\xi)dxdyds\\ \notag
 && +\int_{(0,t]}\int_{(\mathbb{T}^N)^2}\int_{\mathbb{R}^2}f^{-}_1(s,x,\xi)\partial_{\zeta} \alpha dm^{\eta}_2(y,\zeta,s)d\xi dx\\ \notag
\quad &&-\int_{(0,t]}\int_{(\mathbb{T}^N)^2}\int_{\mathbb{R}^2}\bar{f}^{\eta,+}_2(s,y,\zeta)\partial_{\xi} \alpha dm_1(x,\xi,s)d\zeta dy\\
&&\ +\eta \int^t_0\int_{(\mathbb{T}^N)^2}\int_{\mathbb{R}^2}f_1\bar{f}^{\eta}_2\Delta_y \alpha d\xi d\zeta dxdyds\\
&& +\int^t_0\int_{(\mathbb{T}^N)^2}\int_{\mathbb{R}^2}f^{-}_1\partial_{\zeta} \alpha dq^{\eta}(y,s,\zeta)d\xi dx\\
&=:&\langle\langle f_{1,0}\bar{f}_{2,0}, \alpha \rangle\rangle+ R_1+R_2+R_3+R_4+R_5+R_6+R_7.
\end{eqnarray*}
Similarly, we have
\begin{eqnarray*}\notag
\langle\langle \bar{f}^{+}_1(t)f^{\eta,+}_2(t), \alpha \rangle\rangle
&=& \langle\langle \bar{f}_{1,0}{f}_{2,0}, \alpha \rangle\rangle+\int^t_0\int_{(\mathbb{T}^N)^2}\int_{\mathbb{R}^2}\bar{f}_1{f}^{\eta}_2(a(\xi)\cdot \nabla_x+a(\zeta)\cdot \nabla_y) \alpha d\xi d\zeta dxdyds\\
 && +\sum_{k\geq 1}\int^t_0\int_{(\mathbb{T}^N)^2}\int_{\mathbb{R}^2}\bar{f}_1(s,x,\xi)\alpha g_{k}(y,\zeta)h^k(s)d\xi d\nu^{2,\eta}_{y,s}(\zeta)dxdyds\\ \notag
 && -\sum_{k\geq 1}\int^t_0\int_{(\mathbb{T}^N)^2}\int_{\mathbb{R}^2}{f}^{\eta}_2(s,y,\zeta)\alpha g_{k}(x,\xi)h^k(s)d\zeta d\nu^{1}_{x,s}(\xi)dxdyds\\ \notag
 && -\int_{(0,t]}\int_{(\mathbb{T}^N)^2}\int_{\mathbb{R}^2}\bar{f}^{+}_1(s,x,\xi)\partial_{\zeta} \alpha dm^{\eta}_2(y,\zeta,s)d\xi dx\\ \notag
 && +\int_{(0,t]}\int_{(\mathbb{T}^N)^2}\int_{\mathbb{R}^2}{f}^{\eta,-}_2(s,y,\zeta)\partial_{\xi} \alpha dm_1(x,\xi,s)d\zeta dy\\
&& +\eta \int^t_0\int_{(\mathbb{T}^N)^2}\int_{\mathbb{R}^2}\bar{f}_1{f}^{\eta}_2\Delta_y \alpha d\xi d\zeta dxdyds\\
&& -\int^t_0\int_{(\mathbb{T}^N)^2}\int_{\mathbb{R}^2}\bar{f}^{+}_1\partial_{\zeta} \alpha dq^{\eta}(y,s,\zeta)d\xi dx\\
&=:&\langle\langle \bar{f}_{1,0}{f}_{2,0}, \alpha \rangle\rangle+ \bar{R}_1+\bar{R}_2+\bar{R}_3+\bar{R}_4+\bar{R}_5+\bar{R}_6+\bar{R}_7.
\end{eqnarray*}
Referring to  Proposition 13 in \cite{D-V-1}, $R_4, R_5, R_7, \bar{R}_4, \bar{R}_5, \bar{R}_7$ are all non-positive. Then,
using the same argument as the proof of Proposition \ref{prp-1} and Theorem \ref{thm-2}, we take $\alpha=\rho_{\gamma}(x-y)\psi_{\delta}(\xi-\zeta)$ with $\rho_{\gamma}$ and $\psi_{\delta}$ being approximations to the identity on $\mathbb{T}^N$ and $\mathbb{R}$, respectively, it yields
\begin{eqnarray}\notag
&& \int_{(\mathbb{T}^N)^2}\int_{\mathbb{R}^2}\rho_\gamma (x-y)\psi_{\delta}(\xi-\zeta)(f^{\pm}_1(x,t,\xi)\bar{f}^{\eta,\pm}_2(y,t,\zeta)+\bar{f}^{\pm}_1(x,t,\xi){f}^{\eta,\pm}_2(y,t,\zeta))d\xi d\zeta dxdy\\ \notag
&\leq& \int_{(\mathbb{T}^N)^2}\int_{\mathbb{R}^2}\rho_\gamma (x-y)\psi_{\delta}(\xi-\zeta)(f_{1,0}(x,\xi)\bar{f}_{2,0}(y,\zeta)+\bar{f}_{1,0}(x,\xi){f}_{2,0}(y,\zeta))d\xi d\zeta dxdy\\
\label{eee-1}
&&\ +\tilde{R}_1+\tilde{\bar{R}}_1+\tilde{R}_2+\tilde{\bar{R}}_2+\tilde{R}_3+\tilde{\bar{R}}_3+\tilde{R}_6+\tilde{\bar{R}}_6.
\end{eqnarray}
where $ \tilde{R}_i, \tilde{\bar{R}}_i$ in (\ref{eee-1}) are the corresponding $R_i, \bar{R}_i$ with $\alpha=\rho_{\gamma}(x-y)\psi_{\delta}(\xi-\zeta)$ for $ i=1,2,3,6$.

From (\ref{P-11}), $\tilde{R}_1$ and $\tilde{\bar{R}}_1$ can be written as
\begin{eqnarray*}
\tilde{R}_1&=& \int^t_0\int_{(\mathbb{T}^N)^2}\int_{\mathbb{R}^2}f_1\bar{f}^{\eta}_2(a(\xi)-a(\zeta))\cdot \nabla_x \rho_{\gamma}(x-y)\psi_{\delta}(\xi-\zeta) d\xi d\zeta dxdyds,\\
\tilde{\bar{R}}_1&=& \int^t_0\int_{(\mathbb{T}^N)^2}\int_{\mathbb{R}^2}\bar{f}_1{f}^{\eta}_2(a(\xi)-a(\zeta))\cdot \nabla_x \rho_{\gamma}(x-y)\psi_{\delta}(\xi-\zeta) d\xi d\zeta dxdyds.
\end{eqnarray*}
Similarly to the proof of Theorem 15 in \cite{D-V-1}, we get
\begin{eqnarray*}
|\tilde{R}_1|\leq TC_p \delta \gamma^{-1}, \quad |\tilde{\bar{R}}_1|\leq TC_p \delta \gamma^{-1}.
\end{eqnarray*}

Moreover, with the aid of $\gamma_1(\xi,\zeta)=\int^{\xi}_{-\infty}\psi_{\delta}(\xi'-\zeta)d\xi', \gamma_2(\xi,\zeta)=\int^{\infty}_{\zeta}\psi_{\delta}(\xi-\zeta')d\zeta'$ and $\gamma_1(\xi,\zeta)=\gamma_2(\xi,\zeta)$, by the same arguments as the proof of Theorem \ref{thm-2}, it follows that
{
\begin{eqnarray*}\notag
\tilde{\bar{R}}_2+\tilde{\bar{R}}_3
&=&\tilde{R}_2+\tilde{R}_3\\
&=&
\sum_{k\geq 1}\int^t_0\int_{(\mathbb{T}^N)^2}\rho_{\gamma}(x-y)\int_{\mathbb{R}^2}\gamma_1(\xi,\zeta)(g_{k}(x,\xi)-g_{k}(y,\zeta))h^k(s)d \nu^{1}_{x,s}\otimes \nu^{2,\eta}_{y,s}(\xi,\zeta) dxdyds\\
&\leq&\int^t_0|h(s)|_U\int_{(\mathbb{T}^N)^2}\rho_{\gamma}(x-y)\int_{\mathbb{R}^2}\gamma_1(\xi,\zeta)\Big(\sum_{k\geq 1}|g_{k}(x,\xi)-g_{k}(y,\zeta)|^2\Big)^{\frac{1}{2}}d \nu^{1}_{x,s}\otimes \nu^{2,\eta}_{y,s}(\xi,\zeta) dxdyds.
\end{eqnarray*}
Applying the same method as the estimate of $\tilde{K}_2$ in Theorem \ref{thm-2}, we deduce that
\begin{eqnarray*}
\tilde{\bar{R}}_2+\tilde{\bar{R}}_3
&=&\tilde{R}_2+\tilde{R}_3\\
&\leq& \sqrt{D_1} \int^t_0|h(s)|_U\int_{(\mathbb{T}^N)^2}\rho_{\gamma}(x-y)\int_{\mathbb{R}^2}\gamma_1(\xi,\zeta)|x-y|d \nu^{1}_{x,s}\otimes \nu^{2,\eta}_{y,s}(\xi,\zeta) dxdyds\\
&&\ +\sqrt{D_1} \int^t_0|h(s)|_U\int_{(\mathbb{T}^N)^2}\rho_{\gamma}(x-y)\int_{\mathbb{R}^2}\gamma_1(\xi,\zeta)|\xi-\zeta|d \nu^{1}_{x,s}\otimes \nu^{2,\eta}_{y,s}(\xi,\zeta) dxdyds\\
 &\leq& (\gamma+2\delta)\sqrt{D_1}\Big(T+\int^T_0|h(s)|^2_Uds\Big)\\
\label{qeq-20}
&&\ +\sqrt{D_1}\int^t_0|h(s)|_U\int_{(\mathbb{T}^N)^2}\int_{\mathbb{R}^2}(f^{\pm}_1\bar{f}^{\eta,\pm}_2+\bar{f}^{\pm}_1{f}^{\eta,\pm}_2)\rho_{\gamma}(x-y)\psi_{\delta}(\xi-\zeta)dxdyd\xi d\zeta ds.
\end{eqnarray*}
}
For the  term $\tilde{R}_6$, it can be estimated as follows:{
\begin{eqnarray*}
 \tilde{R}_6&\leq& \eta \int^t_0\int_{(\mathbb{T}^N)^2}\int_{\mathbb{R}^2}f_1\bar{f}^{\eta}_2\Delta_y \rho_{\gamma}(x-y)\psi_{\delta}(\xi-\zeta) d\xi d\zeta dxdyds\\
 &=& \eta \int^t_0\int_{(\mathbb{T}^N)^2}\Delta_x \rho_{\gamma}(x-y)\Big[\int_{\mathbb{R}^2}f_1\bar{f}^{\eta}_2\psi_{\delta}(\xi-\zeta) d\xi d\zeta\Big] dxdyds\\
 &=& \eta \int^t_0\int_{(\mathbb{T}^N)^2}\Delta_x \rho_{\gamma}(x-y)\Big[\int_{\mathbb{R}^2}l(\xi, \zeta)d\nu^{1}_{x,s}\otimes \nu^{2,\eta}_{y,s}(\xi,\zeta)\Big] dxdyds,
\end{eqnarray*}
where
\begin{eqnarray*}
l(\xi, \zeta)=\int^{\infty}_{\zeta}\int^{\xi}_{-\infty}\psi_{\delta}(\xi'-\zeta')d\xi'd\zeta'.
\end{eqnarray*}
Moreover, let $\xi''=\xi'-\zeta'$, it follows that
\begin{eqnarray*}
l(\xi, \zeta)&\leq & \int^{\infty}_{\zeta}\left(\int_{\{|\xi''|<\delta,\xi''<\xi-\zeta' \}}\psi_{\delta}(\xi'')d\xi''\right)d\zeta'\\
&\leq & C\int^{\xi+\delta}_{\zeta}\delta \|\psi_{\delta}\|_{L^{\infty}}d\zeta'\\
&\leq &C(|\xi|+|\zeta|+\delta).
\end{eqnarray*}
Thus, we have
\begin{eqnarray*}
 \tilde{R}_6 &\leq& \eta C\int^t_0\int_{(\mathbb{T}^N)^2}\int_{\mathbb{R}^2}\Delta_x \rho_{\gamma}(x-y)(|\xi|+|\zeta|+\delta)d\nu^{1}_{x,s}\otimes d\nu^{2,\eta}_{y,s}(\xi,\zeta)dxdyds\\
 &\leq& C\gamma^{-2}\eta  \int^t_0\int_{(\mathbb{T}^N)^2}\int_{\mathbb{R}^2}(|\xi|+|\zeta|+\delta)d\nu^{1}_{x,s}\otimes \nu^{2,\eta}_{y,s}(\xi,\zeta)dxdyds\\
 &\leq& C(1+\delta)\eta T\gamma^{-2},
\end{eqnarray*}
where we have used the property that the measures $\nu^{1}$ and $\nu^{2,\eta}$ vanish at the infinity.
Similarly, using the same method as above, we conclude that $\tilde{\bar{R}}_6$ has the same estimate of $ \tilde{R}_6$.
}

Based on all the above estimates, it follows that
\begin{eqnarray*}\notag
&& \int_{(\mathbb{T}^N)^2}\int_{\mathbb{R}^2}\rho_\gamma (x-y)\psi_{\delta}(\xi-\zeta)(f^{\pm}_1(x,t,\xi)\bar{f}^{\eta,\pm}_2(y,t,\zeta)+\bar{f}^{\pm}_1(x,t,\xi){f}^{\eta,\pm}_2(y,t,\zeta))d\xi d\zeta dxdy\\ \notag
&\leq& \int_{(\mathbb{T}^N)^2}\int_{\mathbb{R}^2}\rho_\gamma (x-y)\psi_{\delta}(\xi-\zeta)(f_{1,0}(x,\xi)\bar{f}_{2,0}(y,\zeta)+\bar{f}_{1,0}(x,\xi){f}_{2,0}(y,\zeta))d\xi d\zeta dxdy\\
&&\ +2TC_p \delta \gamma^{-1}+2C(1+\delta)\eta T\gamma^{-2}+2(\gamma+2\delta)\sqrt{D_1}\Big(T+\int^T_0|h(s)|^2_Uds\Big)\\
&&\ +2\sqrt{D_1}\int^t_0|h(s)|_U\int_{(\mathbb{T}^N)^2}\int_{\mathbb{R}^2}(f^{\pm}_1\bar{f}^{\eta,\pm}_2+\bar{f}^{\pm}_1{f}^{\eta,\pm}_2)\rho_{\gamma}(x-y)\psi_{\delta}(\xi-\zeta)dxdyd\xi d\zeta ds,
\end{eqnarray*}
By Gronwall inequality, we get
\begin{eqnarray}\notag
&& \int_{(\mathbb{T}^N)^2}\int_{\mathbb{R}^2}\rho_\gamma (x-y)\psi_{\delta}(\xi-\zeta)(f^{\pm}_1(x,t,\xi)\bar{f}^{\eta,\pm}_2(y,t,\zeta)+\bar{f}^{\pm}_1(x,t,\xi){f}^{\eta,\pm}_2(y,t,\zeta))d\xi d\zeta dxdy\\ \notag
&\leq& e^{2\sqrt{D_1}(T+\int^T_0|h(s)|^2_Uds)}\Big[\int_{(\mathbb{T}^N)^2}\int_{\mathbb{R}^2}\rho_\gamma (x-y)\psi_{\delta}(\xi-\zeta)(f_{1,0}(x,\xi)\bar{f}_{2,0}(y,\zeta)+\bar{f}_{1,0}(x,\xi){f}_{2,0}(y,\zeta))d\xi d\zeta dxdy\Big]\\ \notag
&& +2e^{2\sqrt{D_1}(T+\int^T_0|h(s)|^2_Uds)}\Big[TC_p \delta \gamma^{-1}+C(1+\delta)\eta T\gamma^{-2}+(\gamma+2\delta)\sqrt{D_1}\Big(T+\int^T_0|h(s)|^2_Uds\Big)\Big]\\ \notag
&\leq& e^{2\sqrt{D_1}(T+\int^T_0|h(s)|^2_Uds)}\Big[\int_{\mathbb{T}^N}\int_{\mathbb{R}}
(f_{1,0}(x,\xi)\bar{f}_{2,0}(x,\xi)+\bar{f}_{1,0}(x,\xi){f}_{2,0}(x,\xi))dxd\xi+\mathcal{E}_0(\gamma,\delta)\Big]\\
\label{qeq-26}
&& +2e^{2\sqrt{D_1}(T+\int^T_0|h(s)|^2_Uds)}\Big[TC_p \delta \gamma^{-1}+C(1+\delta)\eta T\gamma^{-2}+(\gamma+2\delta)\sqrt{D_1}\Big(T+\int^T_0|h(s)|^2_Uds\Big)\Big],
\end{eqnarray}
where $\mathcal{E}_0(\gamma,\delta)$ is independent of $\eta$ and converges to $0$ as $\gamma, \delta\rightarrow 0$.

Let
\begin{eqnarray*}
\mathcal{E}_t(\eta,\gamma,\delta)&:=&\int_{\mathbb{T}^N}\int_{\mathbb{R}}(f^{\pm}_1(x,t,\xi)\bar{f}^{\eta,\pm}_2(x,t,\xi)+\bar{f}^{\pm}_1(x,t,\xi){f}^{\eta,\pm}_2(x,t,\xi))dxd\xi\\
&&\ -\int_{(\mathbb{T}^N)^2}\int_{\mathbb{R}^2}\rho_\gamma (x-y)\psi_{\delta}(\xi-\zeta)(f^{\pm}_1(x,t,\xi)\bar{f}^{\eta,\pm}_2(y,t,\zeta)+\bar{f}^{\pm}_1(x,t,\xi){f}^{\eta,\pm}_2(y,t,\zeta))d\xi d\zeta dxdy.
\end{eqnarray*}
We claim that $\mathcal{E}_t(\eta,\gamma,\delta)$ is independent of $\eta$. Indeed,
\begin{eqnarray*}
\mathcal{E}_t(\eta,\gamma,\delta)&=&\Big[\int_{\mathbb{T}^N}\int_{\mathbb{R}}(f^{\pm}_1(x,t,\xi)\bar{f}^{\eta,\pm}_2(x,t,\xi)+\bar{f}^{\pm}_1(x,t,\xi){f}^{\eta,\pm}_2(x,t,\xi))dxd\xi\\
&& -\int_{(\mathbb{T}^N)^2}\int_{\mathbb{R}}\rho_{\gamma}(x-y)(f^{\pm}_1(x,t,\xi)\bar{f}^{\eta,\pm}_2(y,t,\xi)+\bar{f}^{\pm}_1(x,t,\xi){f}^{\eta,\pm}_2(y,t,\xi))d\xi dxdy\Big]\\
&& +\Big[\int_{(\mathbb{T}^N)^2}\int_{\mathbb{R}}\rho_{\gamma}(x-y)(f^{\pm}_1(x,t,\xi)\bar{f}^{\eta,\pm}_2(y,t,\xi)+\bar{f}^{\pm}_1(x,t,\xi){f}^{\eta,\pm}_2(y,t,\xi))d\xi dxdy\\
&& -\int_{(\mathbb{T}^N)^2}\int_{\mathbb{R}^2}\rho_\gamma (x-y)\psi_{\delta}(\xi-\zeta)(f^{\pm}_1(x,t,\xi)\bar{f}^{\eta,\pm}_2(y,t,\zeta)+\bar{f}^{\pm}_1(x,t,\xi){f}^{\eta,\pm}_2(y,t,\zeta))d\xi d\zeta dxdy\Big]\\
&=:& H_1+H_2,
\end{eqnarray*}
Applying the same method as in the proofs of (\ref{qeq-14}) and (\ref{qeq-14-1}), it follows that
\begin{eqnarray}\label{qeq-25}
|H_2|\leq 2\delta,
\end{eqnarray}
and
\begin{eqnarray}\notag
|H_1|&\leq& \Big|\int_{(\mathbb{T}^N)^2}\rho_{\gamma}(x-y)\int_{\mathbb{R}}I_{u^{\pm}_h(x,t)>\xi}(I_{u^{\eta,\pm}_h(x,t)\leq \xi}-I_{u^{\eta,\pm}_h(y,t)\leq \xi})d\xi dxdy\Big|\\ \notag
&& +\Big|\int_{(\mathbb{T}^N)^2}\rho_{\gamma}(x-y)\int_{\mathbb{R}}I_{u^{\pm}_h(x,t)\leq\xi}(I_{u^{\eta,\pm}_h(x,t)> \xi}-I_{u^{\eta,\pm}_h(y,t)> \xi})d\xi dxdy\Big|\\
\label{qeq-30}
&\leq& 2\int_{(\mathbb{T}^N)^2}\rho_{\gamma}(x-y)|u^{\eta,\pm}_h(x,t)-u^{\eta,\pm}_h(y,t)|dxdy.
\end{eqnarray}
Combining (\ref{qeq-25}) and (\ref{qeq-30}), it yields
\begin{eqnarray}\notag
&&\mathcal{E}_t(\eta,\gamma,\delta)\\ \notag
&\leq& 2\delta+2\int_{(\mathbb{T}^N)^2}\rho_{\gamma}(x-y)|u^{\eta,\pm}_h(x,t)-u^{\eta,\pm}_h(y,t)|dxdy\\ \notag
&=& 2\delta+2\int_{(\mathbb{T}^N)^2}\int_{\mathbb{R}}\rho_{\gamma}(x-y)(f^{\eta,\pm}_2(x,t,\xi)\bar{f}^{\eta,\pm}_2(y,t,\xi)+\bar{f}^{\eta,\pm}_2(x,t,\xi){f}^{\eta,\pm}_2(y,t,\xi))d\xi dxdy\\
\notag
&\leq&  4\delta+2\int_{(\mathbb{T}^N)^2}\int_{\mathbb{R}^2}\rho_{\gamma}(x-y)
\psi_{\delta}(\xi-\zeta)\Big(f^{\eta,\pm}_2(x,t,\xi)\bar{f}^{\eta,\pm}_2(y,t,\zeta)\\
\label{qeq-28}
&&\ +\bar{f}^{\eta,\pm}_2(x,t,\xi){f}^{\eta,\pm}_2(y,t,\zeta)\Big)d\xi d\zeta dxdy.
\end{eqnarray}
Utilizing (\ref{qeq-27}) and applying the similar method as the proof of (\ref{qeq-26}), we obtain
\begin{eqnarray*}\notag
&& \int_{(\mathbb{T}^N)^2}\int_{\mathbb{R}^2}\rho_\gamma (x-y)\psi_{\delta}(\xi-\zeta)(f^{\eta,\pm}_2(x,t,\xi)\bar{f}^{\eta,\pm}_2(y,t,\zeta)+\bar{f}^{\eta,\pm}_2(x,t,\xi){f}^{\eta,\pm}_2(y,t,\zeta))d\xi d\zeta dxdy\\ \notag
&\leq& e^{\sqrt{D_1}(T+\int^T_0|h(s)|^2_Uds)}\Big[\int_{\mathbb{T}^N}\int_{\mathbb{R}}
(f_{2,0}\bar{f}_{2,0}+\bar{f}_{2,0}{f}_{2,0})dxd\xi+|\mathcal{E}_0(\gamma,\delta)|+J^{\sharp}\Big]\\
\notag
&& +2e^{\sqrt{D_1}(T+\int^T_0|h(s)|^2_Uds)}\Big[TC_p \delta \gamma^{-1}+(\gamma+2\delta)\sqrt{D_1}\Big(T+\int^T_0|h(s)|^2_Uds\Big)\Big],
\end{eqnarray*}
where
 \begin{eqnarray*}
J^{\sharp}&:=&2\eta\int^t_0\int_{(\mathbb{T}^N)^2}\int_{\mathbb{R}^2}({f}^{\eta,\pm}_2(x,s,\xi)\bar{f}^{\eta,\pm}_2(y,s,\zeta)+\bar{f}^{\eta,\pm}_2(x,s,\xi){f}^{\eta,\pm}_2(y,s,\zeta))\Delta_x \alpha d\xi d\zeta dxdyds\\
&& -2\int^t_0\int_{(\mathbb{T}^N)^2}\int_{\mathbb{R}^2} \alpha dq^{\eta}(y,s,\zeta) d\nu^1_{x,s}(\xi) dx-2\int^t_0\int_{(\mathbb{T}^N)^2}\int_{\mathbb{R}^2} \alpha dq^{\eta}(x,s,\xi) d\nu^2_{y,s}(\zeta) dy,
\end{eqnarray*}
 and
 $\mathcal{E}_0(\gamma,\delta)$ is different from that in (\ref{qeq-26}) but they both converge to  0, so we do not distinguish them. From Proposition 6.1 in \cite{DHV}, we know that $J^{\sharp}\leq0.$

Hence,
\begin{eqnarray}\notag
&& \int_{(\mathbb{T}^N)^2}\int_{\mathbb{R}^2}\rho_\gamma (x-y)\psi_{\delta}(\xi-\zeta)(f^{\eta,\pm}_2(x,t,\xi)\bar{f}^{\eta,\pm}_2(y,t,\zeta)+\bar{f}^{\eta,\pm}_2(x,t,\xi){f}^{\eta,\pm}_2(y,t,\zeta))d\xi d\zeta dxdy\\
\label{qeq-26-2}
&\leq&e^{2\sqrt{D_1}(T+\int^T_0|h(s)|^2_Uds)}\Big[|\mathcal{E}_0(\gamma,\delta)|+2TC_p \delta \gamma^{-1}+2(\gamma+2\delta)\sqrt{D_1}\Big(T+\int^T_0|h(s)|^2_Uds\Big)\Big],
\end{eqnarray}

Combining (\ref{qeq-28}) and (\ref{qeq-26-2}), we conclude that
\begin{eqnarray}\notag
&&\mathcal{E}_t(\eta,\gamma,\delta)\\
\label{qeq-29}
&\leq& 4\delta+2e^{2\sqrt{D_1}(T+\int^T_0|h(s)|^2_Uds)}\Big[|\mathcal{E}_0(\gamma,\delta)|+2TC_p \delta \gamma^{-1}+2(\gamma+2\delta)\sqrt{D_1}\Big(T+\int^T_0|h(s)|^2_Uds\Big)\Big],
\end{eqnarray}
which implies that $\mathcal{E}_t(\eta,\gamma,\delta)$ is independent of $\eta$, so we denote  that $\mathcal{E}_t(\gamma,\delta):=\mathcal{E}_t(\eta,\gamma,\delta)$.

From (\ref{qeq-26}) and (\ref{qeq-29}), we deduce that
\begin{eqnarray*}
&&\int_{\mathbb{T}^N}\int_{\mathbb{R}}(f^{\pm}_1(x,t,\xi)\bar{f}^{\eta,\pm}_2(x,t,\xi)+\bar{f}^{\pm}_1(x,t,\xi){f}^{\eta,\pm}_2(x,t,\xi))dxd\xi\\ \notag
&\leq& \int_{(\mathbb{T}^N)^2}\int_{\mathbb{R}^2}\rho_\gamma (x-y)\psi_{\delta}(\xi-\zeta)(f^{\pm}_1(x,t,\xi)\bar{f}^{\eta,\pm}_2(y,t,\zeta)+\bar{f}^{\pm}_1(x,t,\xi){f}^{\eta,\pm}_2(y,t,\zeta))d\xi d\zeta dxdy+\mathcal{E}_t(\gamma,\delta)\\ \notag
&\leq& e^{2\sqrt{D_1}(T+\int^T_0|h(s)|^2_Uds)}\Big[\int_{\mathbb{T}^N}\int_{\mathbb{R}}(f_{1,0}\bar{f}_{2,0}+\bar{f}_{1,0}{f}_{2,0})dxd\xi +|\mathcal{E}_0(\gamma,\delta)|+2TC_p \delta \gamma^{-1}\Big]+\mathcal{E}_t(\gamma,\delta)\\
&& +2e^{2\sqrt{D_1}(T+\int^T_0|h(s)|^2_Uds)}\Big[C(1+\delta)\eta T\gamma^{-2}+(\gamma+2\delta)\sqrt{D_1}\Big(T+\int^T_0|h(s)|^2_Uds\Big)\Big]\\
&\leq&e^{2\sqrt{D_1}(T+\int^T_0|h(s)|^2_Uds)}\Big[3|\mathcal{E}_0(\gamma,\delta)|+6TC_p \delta \gamma^{-1}\Big]+4\delta\\
&& +2e^{2\sqrt{D_1}(T+\int^T_0|h(s)|^2_Uds)}\Big[C(1+\delta)\eta T\gamma^{-2}+3(\gamma+2\delta)\sqrt{D_1}\Big(T+\int^T_0|h(s)|^2_Uds\Big)\Big].
\end{eqnarray*}
Then, we obtain
{
\begin{eqnarray*}\notag
&&\sup_{h\in S_M}\int^T_0\int_{\mathbb{T}^N}\int_{\mathbb{R}}(f^{\pm}_1(x,t,\xi)\bar{f}^{\eta,\pm}_2(x,t,\xi)+\bar{f}^{\pm}_1(x,t,\xi){f}^{\eta,\pm}_2(x,t,\xi))dxd\xi dt\\ \notag
&\leq& Te^{2\sqrt{D_1}(T+M)}\Big[3|\mathcal{E}_0(\gamma,\delta)|+6TC_p \delta \gamma^{-1}\Big]+4\delta T\\
&& +2Te^{2\sqrt{D_1}(T+M)}\Big[C(1+\delta)\eta T\gamma^{-2}+3(\gamma+2\delta)\sqrt{D_1}(T+M)\Big].
\end{eqnarray*}
Taking  $\delta=\gamma^{\frac{4}{3}}$ and $\gamma=\eta^{\frac{1}{3}}$, we get
\begin{eqnarray*}
&&\sup_{h\in S_M}\int^T_0\int_{\mathbb{T}^N}\int_{\mathbb{R}}(f^{\pm}_1(x,t,\xi)\bar{f}^{\eta,\pm}_2(x,t,\xi)+\bar{f}^{\pm}_1(x,t,\xi){f}^{\eta,\pm}_2(x,t,\xi))dxd\xi dt\\
&\leq& Te^{2\sqrt{D_1}(T+M)}\Big[3|\mathcal{E}_0(\gamma,\delta)|+6TC_p\eta^{\frac{1}{9}}\Big]+4\eta^{\frac{4}{9}}T\\
&& +2Te^{2\sqrt{D_1}(T+M)}\Big[CT(1+\eta^{\frac{4}{9}})\eta^{\frac{1}{3}} +3(\eta^{\frac{1}{3}}+2\eta^{\frac{4}{9}})\sqrt{D_1}(T+M)\Big].
\end{eqnarray*}

We deduce further from the following identities
\begin{eqnarray*}
\int_{\mathbb{R}}I_{u_h>\xi}\overline{I_{u^{\eta}_h>\xi}}d\xi=(u_h-u^{\eta}_h)^+,
\quad
\int_{\mathbb{R}}\overline{I_{u_h>\xi}}I_{u^{\eta}_h>\xi}d\xi=(u_h-u^{\eta}_h)^-,
\end{eqnarray*}
that
\begin{eqnarray*}
&&\sup_{h\in S_M}\int^T_0\|u^{\eta}_h(t)-u_h(t)\|_{L^1(\mathbb{T}^N)}dt\\
&\leq& Te^{2\sqrt{D_1}(T+M)}[3|\mathcal{E}_0(\gamma,\delta)|+2TC_p \eta^{\frac{1}{9}}+6CT(1+\eta^{\frac{4}{9}})\eta^{\frac{1}{3}}]+4\eta^{\frac{4}{9}}T\\
&&\ +6Te^{2\sqrt{D_1}(T+M)}(\eta^{\frac{1}{3}}+2\eta^{\frac{4}{9}})\sqrt{D_1}(T+M).
\end{eqnarray*}
Therefore, we get
\begin{eqnarray}
\lim_{\eta\rightarrow 0}\sup_{h\in S_M}\|u^{\eta}_h-u_h\|_{L^1([0,T];L^1(\mathbb{T}^N))}=0.
\end{eqnarray}
We complete the proof.
}
\end{proof}
Now, we are in a position to prove the continuity of $\mathcal{G}^0$.
\begin{thm}\label{thmm-1}
Assume $h^\varepsilon\rightarrow h$ weakly in $L^2([0,T];U)$. Then $u_{h^{\varepsilon}}$ converges to $u_h$ in $L^1([0,T];L^1(\mathbb{T}^N))$, where $u_{h^{\varepsilon}}$ is the kinetic solution of (\ref{P-6}) with $h$ replaced by $h^{\varepsilon}$.
\end{thm}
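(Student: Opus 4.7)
The strategy is to carry out a standard five-term triangle decomposition, already sketched right before Proposition~\ref{prpp-1}, and then exploit each auxiliary estimate that has been set up in Section~4.2. Concretely, I would fix $\delta>0$ and control the five pieces
\[
\|u^{\eta}_{h^{\varepsilon}}-u_{h^{\varepsilon}}\|_{L^1_tL^1_x},\
\|u^{\eta}_{h^{\varepsilon}}-u^{\eta,R}_{h^{\varepsilon}}\|_{L^1_tL^1_x},\
\|u^{\eta,R}_{h^{\varepsilon}}-u^{\eta,R}_{h}\|_{L^1_tL^1_x},\
\|u^{\eta,R}_{h}-u^{\eta}_{h}\|_{L^1_tL^1_x},\
\|u^{\eta}_h-u_h\|_{L^1_tL^1_x}
\]
by making $\eta$ small, then $R$ large, and finally $\varepsilon$ small. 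The first and fifth terms are handled uniformly in $\varepsilon$ by Proposition~\ref{prp-4}, since $h^\varepsilon,h\in S_M$. The second and fourth terms are handled (uniformly in $\varepsilon$) by (\ref{eqq-13}) together with the elementary embedding $L^2([0,T];H)\hookrightarrow L^1([0,T];L^1(\mathbb{T}^N))$, valid because $\mathbb{T}^N$ has finite Lebesgue measure; this dictates the order $\eta\to 0$ first, then $R\to\infty$.

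The core of the argument is the third term, i.e.\ the continuity statement for fixed $\eta,R>0$:
\[
u^{\eta,R}_{h^{\varepsilon}}\longrightarrow u^{\eta,R}_{h}\quad\text{in}\quad L^1([0,T];L^1(\mathbb{T}^N))\qquad\text{as }\varepsilon\to 0.
\]
Here I would invoke the compactness in $L^2([0,T];H)$ proved in Proposition~\ref{prpp-1}: extracting a subsequence, still denoted $u^{\eta,R}_{h^{\varepsilon}}$, I get strong convergence to some $v\in L^2([0,T];H)$, together with the weak convergence $h^\varepsilon\rightharpoonup h$ in $L^2([0,T];U)$. The plan is to identify $v=u^{\eta,R}_h$ by passing to the limit in the mild/weak formulation of (\ref{eqq-12}). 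The diffusion term $\eta\int_0^t\Delta u^{\eta,R}_{h^{\varepsilon}}ds$ passes to the limit in $H^{-1}$ by the uniform bound (\ref{eqq-3}); the flux term $\int_0^t\mathrm{div}(A^R(u^{\eta,R}_{h^{\varepsilon}}))ds$ passes to the limit thanks to the Lipschitz property of the truncated flux $A^R$ combined with the strong $L^2([0,T];H)$ convergence.

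The main obstacle is the forcing term $\int_0^t\Phi(u^{\eta,R}_{h^{\varepsilon}}(s))h^{\varepsilon}(s)ds$, which is a product of a strongly converging sequence and an only weakly converging sequence. To handle it, I would test against an arbitrary $w\in H$, rewrite
\[
\Bigl\langle \int_0^t\Phi(u^{\eta,R}_{h^{\varepsilon}}(s))h^{\varepsilon}(s)ds,w\Bigr\rangle_H
=\int_0^t\bigl\langle h^{\varepsilon}(s),\Phi(u^{\eta,R}_{h^{\varepsilon}}(s))^{\ast}w\bigr\rangle_U ds,
\]
and use the Lipschitz bound (\ref{equ-30-1}) to get
\[
\int_0^T\bigl\|\Phi(u^{\eta,R}_{h^{\varepsilon}}(s))^{\ast}w-\Phi(v(s))^{\ast}w\bigr\|_U^2 ds
\leq D_1\|w\|_H^2\int_0^T\|u^{\eta,R}_{h^{\varepsilon}}(s)-v(s)\|_H^2 ds\to 0.
\]
Hence $\Phi(u^{\eta,R}_{h^{\varepsilon}})^{\ast}w\to \Phi(v)^{\ast}w$ strongly in $L^2([0,T];U)$; combined with $h^\varepsilon\rightharpoonup h$, the weak--strong pairing yields the desired passage to the limit, so that $v$ solves (\ref{eqq-12}) with $h^\varepsilon$ replaced by $h$. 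By the uniqueness of solutions to that parabolic problem (Theorem~2.1 of \cite{GR00}), $v=u^{\eta,R}_h$, and since the limit is independent of the subsequence, the whole family $u^{\eta,R}_{h^{\varepsilon}}$ converges to $u^{\eta,R}_h$ in $L^2([0,T];H)$, hence in $L^1([0,T];L^1(\mathbb{T}^N))$.

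Assembling the five estimates: given $\delta>0$, use Proposition~\ref{prp-4} to pick $\eta$ making terms~1 and~5 at most $\delta/5$ uniformly in $\varepsilon$; with this $\eta$ fixed, use (\ref{eqq-13}) to pick $R$ making terms~2 and~4 at most $\delta/5$ uniformly in $\varepsilon$; with $\eta,R$ fixed, use the continuity proved above to make term~3 at most $\delta/5$ for all $\varepsilon$ small. This yields $\|u_{h^\varepsilon}-u_h\|_{L^1([0,T];L^1(\mathbb{T}^N))}\to 0$, proving the theorem.
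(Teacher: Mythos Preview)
Your proposal is correct and follows the same overall five-term decomposition as the paper, using Proposition~\ref{prp-4} for the outer terms and (\ref{eqq-13}) for the $R$-truncation terms. The only genuine difference is in the treatment of the core term $\|u^{\eta,R}_{h^{\varepsilon}}-u^{\eta,R}_{h}\|$. You use compactness (Proposition~\ref{prpp-1}) to extract a strong $L^2([0,T];H)$ limit $v$, then pass to the limit in the weak formulation of (\ref{eqq-12}) term by term and invoke uniqueness from \cite{GR00} to identify $v=u^{\eta,R}_h$. The paper instead writes an energy estimate for the difference $u^{\eta,R}_{h^{\varepsilon}}-u^{\eta,R}_{h}$ directly, applies Gronwall, and is left with the single cross term $\sup_t\big|\int_0^t\langle\Phi(u^{\eta,R}_h)(h^{\varepsilon}-h),\,u^{\eta,R}_{h^{\varepsilon}}-u^{\eta,R}_h\rangle\,ds\big|$; it then uses the same compactness result only to show this cross term vanishes (by splitting at an auxiliary strong limit $\tilde u$), without ever identifying $\tilde u$. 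Your route is perhaps more conceptual but leans on the precise solution theory and uniqueness for (\ref{eqq-12}); the paper's route is more self-contained and yields $\sup_t\|\cdot\|_H$ convergence without appealing to uniqueness of the truncated parabolic problem.
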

\begin{proof}
Fix any $\eta, R>0$.
For the solution $u^{\eta,R}_{h^{\varepsilon}}$ of (\ref{eqq-12}), we shall firstly prove that when $h^{\varepsilon}\rightarrow h$ weakly in $L^2([0,T];U)$, we have $\lim_{\varepsilon\rightarrow 0}\|u^{\eta,R}_{h^{\varepsilon}}-u^{\eta,R}_h\|_{L^1([0,T];L^1(\mathbb{T}^N))}=0$, where $u^{\eta,R}_{h}$ is the solution of (\ref{eqq-12}) with $h^{\varepsilon}$ replaced by $h$.

In fact, by the chain rule, we have
\begin{eqnarray*}
&&\|u^{\eta,R}_{h^{\varepsilon}}(t)-u^{\eta,R}_h(t)\|^2_{H}+2\eta \int^t_0 \|\nabla (u^{\eta,R}_{h^{\varepsilon}}-u^{\eta,R}_h)\|^2_{H}ds\\
&\leq&  2\int^t_0\langle A^{R}(u^{\eta,R}_{h^{\varepsilon}})-A^{R}(u^{\eta,R}_{h}),\nabla( u^{\eta,R}_{h^{\varepsilon}}-u^{\eta,R}_h)\rangle ds+ 2\int^t_0\langle \Phi(u^{\eta,R}_{h^{\varepsilon}})h^{\varepsilon}(s)-\Phi(u^{\eta,R}_{h})h(s),u^{\eta,R}_{h^{\varepsilon}}-u^{\eta,R}_h\rangle ds\\
&\leq & 2\int^t_0\langle A^{R}(u^{\eta,R}_{h^{\varepsilon}})-A^{R}(u^{\eta,R}_{h}),\nabla( u^{\eta,R}_{h^{\varepsilon}}-u^{\eta,R}_h)\rangle ds+2\int^t_0\langle (\Phi(u^{\eta,R}_{h^{\varepsilon}})-\Phi(u^{\eta,R}_h))h^{\varepsilon}(s),u^{\eta,R}_{h^{\varepsilon}}-u^{\eta,R}_h\rangle ds\\
&&+\ 2\int^t_0\langle \Phi(u^{\eta,R}_{h})(h^{\varepsilon}(s)-h(s)),u^{\eta,R}_{h^{\varepsilon}}-u^{\eta,R}_h\rangle ds\\
&=:& I_1+I_2+2\int^t_0\langle \Phi(u^{\eta,R}_{h})(h^{\varepsilon}(s)-h(s)),u^{\eta,R}_{h^{\varepsilon}}-u^{\eta,R}_h\rangle ds.
\end{eqnarray*}
Using the H\"{o}lder inequality and Lipschitz continuous of $A^{R}$, we get
\begin{eqnarray*}
I_1&\leq& 2R\int^t_0\|\nabla( u^{\eta,R}_{h^{\varepsilon}}-u^{\eta,R}_h)\|_H \|u^{\eta,R}_{h^{\varepsilon}}-u^{\eta,R}_{h}\|_Hds\\
&\leq &\eta \int^t_0 \|\nabla (u^{\eta,R}_{h^{\varepsilon}}-u^{\eta,R}_h)\|^2_{H}ds+C(\eta, R)\int^t_0 \|u^{\eta,R}_{h^{\varepsilon}}-u^{\eta,R}_h\|^2_{H}ds.
\end{eqnarray*}
Using  H\"{o}lder inequality and (\ref{equ-30-1}), we obtain
\begin{eqnarray*}
I_2&\leq& 2\int^t_0 \|\Phi(u^{\eta,R}_{h^{\varepsilon}})-\Phi(u^{\eta,R}_h)\|_{\mathcal{L}_2(U,H)}|h^{\varepsilon}(s)|_U\|u^{\eta,R}_{h^{\varepsilon}}-u^{\eta,R}_h\|_Hds\\
&\leq& \sqrt{D_1}\int^t_0\|u^{\eta,R}_{h^{\varepsilon}}-u^{\eta,R}_h\|^2_H|h^{\varepsilon}(s)|_Uds.
\end{eqnarray*}
Hence, it follows that
\begin{eqnarray*}
&&\sup_{t\in[0,T]}\|u^{\eta,R}_{h^{\varepsilon}}(t)-u^{\eta,R}_h(t)\|^2_{H}+\eta \int^T_0 \|\nabla (u^{\eta,R}_{h^{\varepsilon}}-u^{\eta,R}_h)\|^2_{H}ds\\
&\leq& C(\eta, R)\int^T_0 \|u^{\eta,R}_{h^{\varepsilon}}-u^{\eta,R}_h\|^2_H(1+|h^{\varepsilon}(s)|_U)ds\\
&&\ +2\sup_{t\in[0,T]}\Big|\int^t_0\langle \Phi(u^{\eta,R}_{h})(h^{\varepsilon}(s)-h(s)),u^{\eta,R}_{h^{\varepsilon}}-u^{\eta,R}_h\rangle ds\Big|.
\end{eqnarray*}
By the Gronwall inequality, we obtain
\begin{eqnarray*}
&&\sup_{t\in[0,T]}\|u^{\eta,R}_{h^{\varepsilon}}(t)-u^{\eta,R}_h(t)\|^2_{H}+\eta \int^T_0 \|\nabla (u^{\eta,R}_{h^{\varepsilon}}-u^{\eta,R}_h)\|^2_{H}ds\\
&\leq& 2\sup_{t\in [0,T]}\Big|\int^t_0\langle \Phi(u^{\eta,R}_{h})(h^{\varepsilon}(s)-h(s)),u^{\eta,R}_{h^{\varepsilon}}-u^{\eta,R}_h\rangle ds\Big| \exp\Big\{C(\eta, R)\int^T_0(1+|h^{\varepsilon}(s)|^2_U)ds\Big\}\\
&\leq& C(\eta, R,T,M)\sup_{t\in[0,T]}\Big|\int^t_0\langle \Phi(u^{\eta,R}_{h})(h^{\varepsilon}(s)-h(s)),u^{\eta,R}_{h^{\varepsilon}}-u^{\eta,R}_h\rangle ds\Big|.
\end{eqnarray*}
To show $\lim_{\varepsilon\rightarrow 0}\sup_{t\in[0,T]}\|u^{\eta,R}_{h^{\varepsilon}}(t)-u^{\eta,R}_h(t)\|^2_{H}=0$, it suffices to prove that
\begin{eqnarray*}
\lim_{\varepsilon\rightarrow 0}\sup_{0\leq t\leq T}\Big|\int^t_0\langle \Phi(u^{\eta,R}_{h})(h^{\varepsilon}-h),u^{\eta,R}_{h^{\varepsilon}}-u^{\eta,R}_h\rangle ds\Big|=0.
\end{eqnarray*}
This will be achieved if we show that for any sequence ${\varepsilon}_m\rightarrow 0$, one can find a subsequence ${\varepsilon}_{m_{k}}\rightarrow  0$ such that
\begin{eqnarray}\label{eqq-4}
\lim_{k\rightarrow \infty}\sup_{0\leq t\leq T}\Big|\int^t_0\langle \Phi(u^{\eta,R}_{h})(h^{{\varepsilon}_{m_k}}-h),u^{\eta,R}_{h^{{\varepsilon}_{m_k}}}-u^{\eta,R}_h\rangle ds\Big|=0.
\end{eqnarray}
Now fix a sequence ${\varepsilon}_m\rightarrow 0$. Since $\{u^{\eta,R}_{h^{{\varepsilon}_{m}}}, m\geq 1\}$ is compact in $L^2([0,T];H)$, there exists a subsequence \{$m_k$, $k\geq 1$\} and a mapping $\tilde{u}$  such that $u^{\eta,R}_{h^{{\varepsilon}_{m_k}}}\rightarrow \tilde{u}$ in $L^2([0,T];H)$.  Now, note that
\begin{eqnarray*}
&&\sup_{0\leq t\leq T}\Big|\int^t_0\langle \Phi(u^{\eta,R}_{h})(h^{{\varepsilon}_{m_k}}-h),u^{\eta,R}_{h^{{\varepsilon}_{m_k}}}-u^{\eta,R}_h\rangle ds\Big|\\
&\leq& \sup_{0\leq t\leq T}\Big|\int^t_0\langle \Phi(u^{\eta,R}_{h})(h^{{\varepsilon}_{m_k}}-h),u^{\eta,R}_{h^{{\varepsilon}_{m_k}}}-\tilde{u}\rangle ds\Big|\\
&&\ +\sup_{0\leq t\leq T}\Big|\int^t_0\langle \Phi(u^{\eta,R}_{h})(h^{{\varepsilon}_{m_k}}-h),\tilde{u}-u^{\eta,R}_h\rangle ds\Big|.
\end{eqnarray*}
Since $h^{{\varepsilon}_{m_k}}\rightarrow h$ weakly in $L^2([0,T];U)$, for every $t>0$, it follows that
\begin{eqnarray}\label{eqq-14}
\int^t_0\langle \Phi(u^{\eta,R}_{h})(h^{{\varepsilon}_{m_k}}-h),\tilde{u}-u^{\eta,R}_h\rangle ds\rightarrow0,\ {\rm{as}}\ k\rightarrow \infty.
\end{eqnarray}
On the other hand, by (\ref{equ-30}) and utilizing the assumption on $h$, for $0<t_1<t_2\leq T$, we have
\begin{eqnarray}\notag
&&\Big|\int^{t_2}_{t_1}\langle \Phi(u^{\eta,R}_{h})(h^{{\varepsilon}_{m_k}}-h),\tilde{u}-u^{\eta,R}_h\rangle ds\Big|\\
\notag
&\leq& \int^{t_2}_{t_1} \|\tilde{u}-u^{\eta,R}_h\|_H \|\Phi(u^{\eta,R}_h)\|_{\mathcal{L}_2(U,H)}|h^{{\varepsilon}_{m_k}}-h|_Uds\\
\notag
&\leq& \sqrt{D_0} \int^{t_2}_{t_1} \|\tilde{u}-u^{\eta,R}_h\|_H (1+\|u^{\eta,R}_h\|_H)|h^{{\varepsilon}_{m_k}}-h|_Uds\\ \notag
&\leq& \sqrt{D_0}(2M)^{\frac{1}{2}}\Big(1+\sup_{t\in[0,T]}\|u^{\eta,R}_h\|_H\Big)\Big(\int^{t_2}_{t_1}\|\tilde{u}-u^{\eta,R}_h\|^2_H ds\Big)^{\frac{1}{2}}\\
\label{eqq-15}
&\leq& \sqrt{D_0}C(M,T, \|u_0\|_H)\Big(\int^{t_2}_{t_1}\|\tilde{u}-u^{\eta,R}_h\|^2_H ds\Big)^{\frac{1}{2}}.
\end{eqnarray}
Combining (\ref{eqq-14}) and (\ref{eqq-15}), we deduce that
\begin{eqnarray*}
\lim_{k\rightarrow \infty}\sup_{0\leq t\leq T}\Big|\int^t_0\langle \Phi(u^{\eta,R}_{h})(h^{{\varepsilon}_{m_k}}-h),\tilde{u}-u^{\eta,R}_h\rangle ds\Big|=0.
\end{eqnarray*}
By H\"{o}lder inequality, we have
\begin{eqnarray*}
\sup_{t\in[0,T]}\Big|\int^t_0\langle \Phi(u^{\eta,R}_{h})(h^{{\varepsilon}_{m_k}}-h),u^{\eta,R}_{h^{{\varepsilon}_{m_k}}}-\tilde{u}\rangle ds\Big|
\leq \sqrt{D_0}C(M,T, \|u_0\|_H)\Big(\int^{T}_{0}\|u^{\eta,R}_{h^{{\varepsilon}_{m_k}}}-\tilde{u}\|^2_Hds\Big)^{\frac{1}{2}}.
\end{eqnarray*}
Since
$u^{\eta,R}_{h^{{\varepsilon}_{m_k}}}\rightarrow \tilde{u}$ in $L^2([0,T];H)$, we obtain
\begin{eqnarray*}
\lim_{k\rightarrow \infty}\sup_{t\in[0,T]}\Big|\int^t_0\langle \Phi(u^{\eta,R}_{h})(h^{{\varepsilon}_{m_k}}-h),u^{\eta,R}_{h^{{\varepsilon}_{m_k}}}-\tilde{u}\rangle ds\Big|=0.
\end{eqnarray*}
Collecting the above estimates, we prove (\ref{eqq-4}).
Hence
\begin{eqnarray*}
\lim_{\varepsilon\rightarrow 0}\sup_{t\in[0,T]}\|u^{\eta,R}_{h^{\varepsilon}}(t)-u^{\eta,R}_h(t)\|^2_{H}=0,
\end{eqnarray*}
which further implies that for any $\eta>0$, $R>0$,
\begin{eqnarray}\label{eqq-5}
\lim_{\varepsilon\rightarrow 0}\|u^{\eta,R}_{h^{\varepsilon}}-u^{\eta,R}_h\|_{L^1([0,T];L^1(\mathbb{T}^N))}=0.
\end{eqnarray}
Note that for any $\varepsilon, \eta, R>0$,
\begin{eqnarray}\notag
&&\|u_{h^{\varepsilon}}-u_h\|_{L^1([0,T];L^1(\mathbb{T}^N))}\\ \notag
&\leq& \|u^{\eta}_{h^{\varepsilon}}-u_{h^{\varepsilon}}\|_{L^1([0,T];L^1(\mathbb{T}^N))}
+\|u^{\eta}_{h^{\varepsilon}}-u^{\eta,R}_{h^{\varepsilon}}\|_{L^1([0,T];L^1(\mathbb{T}^N))}
+\|u^{\eta,R}_{h^{\varepsilon}}-u^{\eta,R}_{h}\|_{L^1([0,T];L^1(\mathbb{T}^N))}\\
\label{eqq-6}
&&\
+\|u^{\eta,R}_{h}-u^{\eta}_{h}\|_{L^1([0,T];L^1(\mathbb{T}^N))}
+\|u^{\eta}_h-u_h\|_{L^1([0,T];L^1(\mathbb{T}^N))}.
\end{eqnarray}
For any $\iota>0$, by Proposition \ref{prp-4}, there exists $\eta_{0}$ such that for all $\varepsilon>0$,
$$\|u^{\eta_0}_{h^{\varepsilon}}-u_{h^{\varepsilon}}\|_{L^1([0,T];L^1(\mathbb{T}^N))}\leq \frac{\iota}{4} \,\,\mbox{and}\,\, \|u^{\eta_0}_h-u_h\|_{L^1([0,T];L^1(\mathbb{T}^N))}\leq \frac{\iota}{4}.$$
Letting $\eta=\eta_0$, we deduce from (\ref{eqq-6}) that
\begin{eqnarray}\notag
\|u_{h^{\varepsilon}}-u_h\|_{L^1([0,T];L^1(\mathbb{T}^N))}
&\leq& \frac{\iota}{2}
+\|u^{\eta_0}_{h^{\varepsilon}}-u^{\eta_0,R}_{h^{\varepsilon}}\|_{L^1([0,T];L^1(\mathbb{T}^N))}+\|u^{\eta_0,R}_{h^{\varepsilon}}-u^{\eta_0,R}_{h}\|_{L^1([0,T];L^1(\mathbb{T}^N))}\\
\label{eqq-16}
&&\
+\|u^{\eta_0,R}_{h}-u^{\eta_0}_{h}\|_{L^1([0,T];L^1(\mathbb{T}^N))}.
\end{eqnarray}
Using (\ref{eqq-13}), there exists $R_0$ large enough such that for all $\varepsilon>0$, $$\|u^{\eta_0}_{h^{\varepsilon}}-u^{\eta_0,R_0}_{h^{\varepsilon}}\|_{L^1([0,T];L^1(\mathbb{T}^N))}\leq \frac{\iota}{4}\,\, \mbox{and} \,\, \|u^{\eta_0,R_0}_{h}-u^{\eta_0}_{h}\|_{L^1([0,T];L^1(\mathbb{T}^N))}\leq \frac{\iota}{4}.$$
Replacing $R$ by $R_0$ in (\ref{eqq-16}), we get
\begin{eqnarray*}
\|u_{h^{\varepsilon}}-u_h\|_{L^1([0,T];L^1(\mathbb{T}^N))}\leq \iota+\|u^{\eta_0,R_0}_{h^{\varepsilon}}-u^{\eta_0,R_0}_{h}\|_{L^1([0,T];L^1(\mathbb{T}^N))}.
\end{eqnarray*}
Using (\ref{eqq-5}), we conclude that
\begin{eqnarray*}
\lim_{\varepsilon\rightarrow 0}\|u_{h^{\varepsilon}}-u_h\|_{L^1([0,T];L^1(\mathbb{T}^N))}\leq \iota.
\end{eqnarray*}
Since the constant $\iota$ is arbitrary, we obtain the desired result.
\end{proof}

\section{Large deviations}

For any family $\{h^{\varepsilon}; 0<\varepsilon<1\}\subset \mathcal{A}_M$ with $h^{\varepsilon}=\sum_{k\geq 1}h^{\varepsilon,k}e_k$, we consider the following equation
\begin{eqnarray}\label{P-20}
\left\{
  \begin{array}{ll}
  d\bar{u}^{\varepsilon}+div(A(\bar{u}^{\varepsilon}))dt=\Phi(\bar{u}^{\varepsilon}) h^{\varepsilon}(t)dt+\sqrt{\varepsilon}\Phi(\bar{u}^{\varepsilon}) dW(t),\\
\bar{u}^{\varepsilon}(0)=u_0.
  \end{array}
\right.
\end{eqnarray}
{Combining Theorem \ref{thm-4} and Theorem \ref{thm-2},} we conclude that there exists a unique kinetic solution $\bar{u}^{\varepsilon}$ with initial data $u_0\in L^{\infty}(\mathbb{T}^N)$ satisfying that for any $p\geq 1$
\[
\mathbb{E}\Big(\underset{0\leq t\leq T}{{\rm{ess\sup}}}\ \|\bar{u}^{\varepsilon}(t)\|^p_{L^p(\mathbb{T}^N)}\Big)\leq C_p,
\]
and there exists a kinetic measure $m^\varepsilon\in\mathcal{M}^+_0(\mathbb{T}^N\times [0,T]\times \mathbb{R})$ such that $f^\varepsilon:=I_{\bar{u}^{\varepsilon}>\xi}$ fulfills that for all $\varphi\in C_c^1(\mathbb{T}^N\times [0,T)\times\mathbb{R})$,
\begin{eqnarray}\notag
&&\int^T_0\langle f^\varepsilon(t), \partial_t \varphi(t)\rangle dt+\langle f_0, \varphi(0)\rangle +\int^T_0\langle f^\varepsilon(t), a(\xi)\cdot \nabla \varphi (t)\rangle dt\\ \notag
&=& -\sqrt{\varepsilon}\sum_{k\geq 1}\int^T_0\int_{\mathbb{T}^N} g_k(x,\bar{u}^\varepsilon(x,t))\varphi(x,t,\bar{u}^\varepsilon(x,t))dxd\beta_k(t)\\ \notag
&&\ -\frac{\varepsilon}{2}\int^T_0\int_{\mathbb{T}^N}\partial_{\xi}\varphi(x,t,\bar{u}^\varepsilon(x,t))G^2(x,\bar{u}^\varepsilon(x,t)) dxdt\\
\label{equ-3}
&&\ -\sum_{k\geq 1}\int^T_0\int_{\mathbb{T}^N}\varphi(x,t,\bar{u}^\varepsilon(x,t)) g_k(x,\bar{u}^\varepsilon(x,t))h^{\varepsilon,k}(t)dxdt + m^\varepsilon(\partial_{\xi} \varphi),\quad a.s.
\end{eqnarray}
 where $G^2:=\sum_{k\geq1}|g_k|^2$. According to the definition of $\mathcal{G}^{\varepsilon}$, it is clear  that $\mathcal{G}^{\varepsilon}\Big(W(\cdot)+\frac{1}{\sqrt{\varepsilon}}\int^{\cdot}_0h^{\varepsilon}(s)ds\Big)=\bar{u}^{\varepsilon}(\cdot)$.

According to Theorem \ref{thm-7} (the sufficient condition B) and Theorem \ref{thmm-1}, we only need to prove the following result to establish the main result.
\begin{thm}\label{thm-5}
For every $M<\infty$, let $\{h^{\varepsilon}: \varepsilon>0\}$ $\subset \mathcal{A}_M$. Then
\[
\Big\|\mathcal{G}^{\varepsilon}\Big(W(\cdot)+\frac{1}{\sqrt{\varepsilon}}\int^{\cdot}_0h^\varepsilon(s)ds\Big)-\mathcal{G}^{0}\Big(\int^{\cdot}_0h^\varepsilon(s)ds\Big)\Big\|_{L^1([0,T];L^1(\mathbb{T}^N))}\rightarrow 0\quad {\rm{in \ probability}}.
\]

\end{thm}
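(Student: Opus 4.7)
The plan is to prove the stronger statement
\[
\lim_{\varepsilon\to 0}\mathbb E\|\bar u^{\varepsilon}-u_{h^{\varepsilon}}\|_{L^1([0,T];L^1(\mathbb T^N))}=0,
\]
from which Theorem \ref{thm-5} follows by Markov's inequality. The argument will be a stochastic doubling of variables that mirrors Proposition \ref{prp-1}, Theorem \ref{thm-2}, and Proposition \ref{prp-4}, comparing the kinetic indicator $f_1^\varepsilon:=I_{\bar u^\varepsilon>\xi}$ (a semimartingale driven by the formulation (\ref{equ-3})) with $f_2^\varepsilon:=I_{u_{h^\varepsilon}>\zeta}$ (a finite-variation process satisfying (\ref{P-5}) with $h=h^\varepsilon$); the two share the initial profile $I_{u_0>\cdot}$, so the initial discrepancy is exactly zero.

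The next step is to apply the It\^o product formula to $\langle\langle f_1^{\varepsilon,+}\bar f_2^{\varepsilon,+},\alpha\rangle\rangle+\langle\langle\bar f_1^{\varepsilon,+}f_2^{\varepsilon,+},\alpha\rangle\rangle$ with $\alpha(x,\xi,y,\zeta)=\rho_\gamma(x-y)\psi_\delta(\xi-\zeta)$, noting that the cross quadratic variation vanishes because $u_{h^\varepsilon}$ has no Brownian component. Relative to Proposition \ref{prp-1}, the expansion will pick up exactly two new families of terms coming from $\sqrt\varepsilon\,\Phi(\bar u^\varepsilon)\,dW$: a true martingale $\mathcal M_t^\varepsilon=\sqrt\varepsilon\sum_k\int_0^t\!\!\int\alpha\,g_k\bar f_2^\varepsilon\,d\nu^{1,\varepsilon}d\zeta dxdy\,d\beta_k$ (with mirror partner), which vanishes upon taking $\mathbb E$ thanks to (\ref{equ-28}) and the essential-sup moment bound in Definition \ref{dfn-2}; and the It\^o correction
\[
\mathcal I_t^\varepsilon=-\tfrac{\varepsilon}{2}\int_0^t\!\!\int\partial_\xi\alpha\,G^2(x,\xi)\bar f_2^\varepsilon(y,s,\zeta)\,d\nu^{1,\varepsilon}_{x,s}(\xi)\,d\zeta\,dxdyds.
\]
I would then integrate by parts in $\zeta$ (using $\partial_\xi\alpha=-\partial_\zeta\alpha$ and $\partial_\zeta\bar f_2^\varepsilon=\nu^{2,\varepsilon}$), turning $\mathcal I_t^\varepsilon$ into an integral against $d\nu^{1,\varepsilon}\otimes d\nu^{2,\varepsilon}$, and combine $G^2\leq D_0(1+|\xi|^2)$, $\|\psi_\delta\|_\infty\leq C\delta^{-1}$ and the uniform-in-$\varepsilon$ second moment of $\nu^{1,\varepsilon}$ to obtain $\mathbb E|\mathcal I_t^\varepsilon|\leq C(T,D_0)\varepsilon/\delta$.

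The remaining contributions (flux $\tilde K_1,\tilde{\bar K}_1$ and noise coefficient $\tilde K_2$) would be estimated verbatim as in the proof of Theorem \ref{thm-2}, with $h$ replaced by $h^\varepsilon$; the constraint $\int_0^T|h^\varepsilon(s)|_U^2\,ds\leq M$ a.s. keeps every bound uniform in $\varepsilon$ after taking $\mathbb E$. Assembling these pieces and invoking Gronwall exactly as in the passage from (\ref{equ-39}) to (\ref{P-16}) should yield
\[
\mathbb E\!\int_{\mathbb T^N}\!\!\int_{\mathbb R}\bigl(f_1^{\varepsilon,\pm}\bar f_2^{\varepsilon,\pm}+\bar f_1^{\varepsilon,\pm}f_2^{\varepsilon,\pm}\bigr)\,dxd\xi\leq C(T,M,D_0,D_1)\Bigl[\mathcal E_0(\gamma,\delta)+\tfrac{\delta}{\gamma}+\gamma+\delta+\tfrac{\varepsilon}{\delta}\Bigr].
\]
Choosing $\gamma=\varepsilon^{1/2}$ and $\delta=\gamma^{4/3}=\varepsilon^{2/3}$ gives $\delta/\gamma=\varepsilon^{1/6}$, $\varepsilon/\delta=\varepsilon^{1/3}$, and $\gamma,\delta\to 0$, so every term on the right vanishes. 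Identities (\ref{equ-1}) and integration in $t$ then produce $\mathbb E\|\bar u^\varepsilon-u_{h^\varepsilon}\|_{L^1([0,T];L^1(\mathbb T^N))}\to 0$, and Markov's inequality supplies convergence in probability.

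The main obstacle will be the It\^o correction $\mathcal I_t^\varepsilon$: unlike the SPDE uniqueness setting where Itô corrections of two noisy equations combine with their cross quadratic variation into a Lipschitz-controllable quantity, here only $f_1^\varepsilon$ contributes such a correction and it must be dominated in isolation. Its natural size $O(\varepsilon/\delta)$ forces a coupled $\varepsilon$-$\delta$-$\gamma$ scaling rather than an independent double limit, and the delicate task is to verify that the three small parameters can be scaled consistently so that every error term in the doubling argument vanishes simultaneously.
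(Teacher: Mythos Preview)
Your overall strategy is the right one and mirrors the paper's, but there is a genuine gap in how you handle the stochastic integral $\mathcal M_t^\varepsilon$ in combination with Gronwall. The controls $h^\varepsilon$ lie in $\mathcal A_M$, not in $S_M$; they are \emph{random} predictable processes. Hence the Gronwall coefficient $|h^\varepsilon(s)|_U$ appearing in the $\tilde K_2$-type term is itself random, and after taking expectation you obtain
\[
\mathbb E\Psi(t)\le A+\mathbb E\,\mathcal I_t^\varepsilon+\cdots+2\sqrt{D_1}\int_0^t\mathbb E\bigl[\,|h^\varepsilon(s)|_U\,\Psi(s)\,\bigr]\,ds,
\]
which is \emph{not} a closed Gronwall inequality for $\mathbb E\Psi(t)$: you cannot factor $|h^\varepsilon(s)|_U$ out of the expectation. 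So the two operations you propose---take $\mathbb E$ to annihilate $\mathcal M_t^\varepsilon$, then Gronwall---are incompatible.

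The paper resolves this by applying Gronwall \emph{pathwise} first (the almost-sure bound $\int_0^T|h^\varepsilon|_U\,ds\le T+M$ makes the exponential factor deterministically bounded by $e^{2\sqrt{D_1}(T+M)}$), after which the martingale term survives as $|\tilde J_7(t)|+|\tilde{\bar J}_7(t)|$ inside the pathwise estimate. Only then does one take expectation, and $\mathbb E\sup_{t\le T}|\tilde J_7(t)|$ must be controlled via Burkholder--Davis--Gundy. That estimate forces one to bound $\rho_\gamma^2$, introducing an unavoidable factor $\gamma^{-N}$ and yielding $\mathbb E\sup_t|\tilde J_7|\lesssim \sqrt{\varepsilon}\,\gamma^{-N}$. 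Your scaling $\gamma=\varepsilon^{1/2}$ would then give $\sqrt{\varepsilon}\,\gamma^{-N}=\varepsilon^{(1-N)/2}$, which does not vanish for $N\ge 1$. The paper instead takes $\delta=\gamma^{4/3}$ and $\gamma=\varepsilon^{1/(2(1+N))}$, so that $\sqrt{\varepsilon}\,\gamma^{-N}=\varepsilon^{1/(2(1+N))}\to 0$ while $\delta\gamma^{-1}$, $\varepsilon\delta^{-1}$, $\gamma$, $\delta$ all vanish as well.

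A secondary point: your final inequality carries only $\mathcal E_0(\gamma,\delta)$, but to pass from the doubled integral back to $\int_{\mathbb T^N}\int_{\mathbb R}(f_1\bar f_2+\bar f_1 f_2)\,dxd\xi$ you also need to control $\mathcal E_t(\gamma,\delta)$ uniformly in $t$ and $\omega$; this does not come for free and is handled in the paper by a separate doubling estimate for the (deterministic) skeleton solution $v^\varepsilon=u_{h^\varepsilon}$ against itself.
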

\begin{proof}
Recall that $\bar{u}^\varepsilon=\mathcal{G}^{\varepsilon}\Big(W(\cdot)+\frac{1}{\sqrt{\varepsilon}}\int^{\cdot}_0h^\varepsilon(s)ds\Big)$ is the kinetic solution to (\ref{P-20}) with the corresponding kinetic measure ${m}^{\varepsilon}_1$. Moreover, $v^{\varepsilon}:=\mathcal{G}^{0}\Big(\int^{\cdot}_0h^\varepsilon(s)ds\Big)$ is the kinetic solution to the skeleton equation (\ref{P-2}) with $h$ replaced by $h^\varepsilon$ and the corresponding kinetic measure is denoted by $\bar{m}^{\varepsilon}_2$.

Denote $f_1(x,t,\xi):=I_{\bar{u}^\varepsilon(x,t)>\xi}$ and $f_2(y,t,\zeta):=I_{v^{\varepsilon}(y,t)>\zeta}$. Applying the same procedure as (\ref{e-14}), for any $\varphi_1(x,\xi)\in C^{\infty}_c(\mathbb{T}^N_x\times \mathbb{R}_{\xi})$, we have
\begin{eqnarray*}
\langle {f}^{+}_1(t), \varphi_1\rangle&=&\langle {f}_{1,0}, \varphi_1\rangle +\int^t_0\langle {f}_1(s), a(\xi)\cdot \nabla_x \varphi_1 (x,\xi)\rangle ds\\
&& +\sqrt{\varepsilon}\sum_{k\geq 1}\int^t_0\int_{\mathbb{T}^N}\int_{\mathbb{R}} g_{k}(x,\xi)\varphi_1(x,\xi)d{\nu}^{1,\varepsilon}_{x,s}(\xi)dxd\beta_k(s)\\ \notag
&& +\frac{\varepsilon}{2}\int^t_0\int_{\mathbb{T}^N}\int_{\mathbb{R}}\partial_{\xi}\varphi_1(x,\xi)G^2(x,\xi)d{\nu}^{1,\varepsilon}_{x,s}(\xi)dxds\\ \notag
&&+\sum_{k\geq 1}\int^t_0\int_{\mathbb{T}^N}\int_{\mathbb{R}}\varphi_1(x,\xi) g_{k}(x,\xi)h^{\varepsilon,k}(s)d{\nu}^{1,\varepsilon}_{x,s}(\xi)dxds-\langle {m}^\varepsilon_1,\partial_{\xi} \varphi_1\rangle([0,t]), \quad a.s.,
\end{eqnarray*}
where ${f}_{1,0}=I_{u_0>\xi}$ and ${\nu}^{1,\varepsilon}_{x,s}(\xi)=-\partial_{\xi}{f}^{+}_1(s,x,\xi)=\partial_{\xi}\bar{f}_1(s,x,\xi)=\delta_{\bar{u}^{\varepsilon}(x,t)=\xi}$.
 Similarly, in view of (\ref{equ-3}), for all $\varphi_2(y,\zeta)\in C^{\infty}_c(\mathbb{T}^N_y\times \mathbb{R}_\zeta)$, we have
\begin{eqnarray*}\notag
\langle \bar{f}^{+}_2(t),\varphi_2\rangle&=&\langle \bar{f}_{2,0}, \varphi_2\rangle+\int^t_0\langle \bar{f}_2(s), a(\zeta)\cdot \nabla_y \varphi_2(y,\zeta)\rangle ds\\ \notag
&&-\sum_{k\geq 1}\int^t_0\int_{\mathbb{T}^N}\int_{\mathbb{R}} g_{k}(y,\zeta)\varphi_2 (y,\zeta)h^{\varepsilon,k}(s)d\bar{\nu}^{2,\varepsilon}_{y,s}(\zeta)dyds +\langle \bar{m}^\varepsilon_2,\partial_{\zeta} \varphi_2\rangle([0,t]),\quad a.s.,
\end{eqnarray*}
where $f_{2,0}=I_{u_0>\zeta}$ and $\bar{\nu}^{2,\varepsilon}_{y,s}(\zeta)=\partial_{\zeta}\bar{f}_2(s,y,\zeta)=-\partial_{\zeta}{f}_2(s,y,\zeta)=\delta_{v^{\varepsilon}(y,t)=\zeta}$.
Setting $\alpha(x,\xi,y,\zeta)=\varphi_1(x,\xi)\varphi_2(y,\zeta)$, using integration by parts formula and It\^{o} formula, we deduce that
\begin{eqnarray*}\notag
&& \langle\langle{f}^{+}_1(t)\bar{f}^{+}_2(t), \alpha \rangle\rangle\\ \notag
&=&\langle\langle f_{1,0} \bar{f}_{2,0}, \alpha \rangle\rangle
+\int^t_0\int_{(\mathbb{T}^N)^2}\int_{\mathbb{R}^2}f_1\bar{f}_2(a(\xi)-a(\zeta))\cdot \nabla_x \alpha d\xi d\zeta dxdyds\\ \notag
&&+\frac{\varepsilon}{2}\int^t_0\int_{(\mathbb{T}^N)^2}\int_{\mathbb{R}^2}\partial_{\xi} \alpha \bar{f}_2(s,y,\zeta) G^2(x,\xi)d{\nu}^{1,\varepsilon}_{x,s}(\xi)d\zeta dxdyds\\ \notag
&&+\sum_{k\geq 1}\int^t_0\int_{(\mathbb{T}^N)^2}\int_{\mathbb{R}^2}\bar{f}_2(s,y,\zeta)\alpha g_{k}(x,\xi)h^{\varepsilon,k}(s)d\zeta d{\nu}^{1,\varepsilon}_{x,s}(\xi)dxdyds\\ \notag
\quad &&-\sum_{k\geq 1}\int^t_0\int_{(\mathbb{T}^N)^2}\int_{\mathbb{R}^2}{f}_1(s,x,\xi)\alpha g_{k}(y,\zeta)h^{\varepsilon,k}(s)d\xi d\bar{\nu}^{2,\varepsilon}_{y,s}(\zeta)dxdyds\\ \notag
\quad &&-\int_{(0,t]}\int_{(\mathbb{T}^N)^2}\int_{\mathbb{R}^2}\bar{f}^{+}_2(s,y,\zeta)\partial_{\xi} \alpha d{m}^\varepsilon_1(x,\xi,s)d\zeta dy\\ \notag
\quad &&+\int_{(0,t]}\int_{(\mathbb{T}^N)^2}\int_{\mathbb{R}^2}{f}^{-}_1(s,x,\xi)\partial_{\zeta} \alpha d\bar{m}^\varepsilon_2(y,\zeta,s)d\xi dx\\ \notag
&&+\sqrt{\varepsilon}\sum_{k\geq 1}\int^t_0\int_{(\mathbb{T}^N)^2}\int_{\mathbb{R}^2}\bar{f}_2(s,y,\zeta)g_{k}(x,\xi)\alpha d\zeta d{\nu}^{1,\varepsilon}_{x,s}(\xi)dxdyd\beta_k(s)\\
&=:&\langle\langle {f}_{1,0}\bar{f}_{2,0}, \alpha \rangle\rangle+J_1+J_2+J_3+J_4+J_5+J_6+J_7, \quad a.s..
\end{eqnarray*}
Similarly, we get
\begin{eqnarray*}\notag
&& \langle\langle\bar{f}^{+}_1(t){f}^{+}_2(t), \alpha \rangle\rangle\\ \notag
&=&\langle\langle \bar{f}_{1,0} {f}_{2,0}, \alpha \rangle\rangle
+\int^t_0\int_{(\mathbb{T}^N)^2}\int_{\mathbb{R}^2}\bar{f}_1{f}_2(a(\xi)-a(\zeta))\cdot \nabla_x \alpha d\xi d\zeta dxdyds\\ \notag
&& -\frac{\varepsilon}{2}\int^t_0\int_{(\mathbb{T}^N)^2}\int_{\mathbb{R}^2}\partial_{\xi} \alpha {f}_2(s,y,\zeta) G^2(x,\xi)d{\nu}^{1,\varepsilon}_{x,s}(\xi)d\zeta dxdyds\\ \notag
&& -\sum_{k\geq 1}\int^t_0\int_{(\mathbb{T}^N)^2}\int_{\mathbb{R}^2}{f}_2(s,y,\zeta)\alpha g_{k}(x,\xi)h^{\varepsilon,k}(s)d\zeta d{\nu}^{1,\varepsilon}_{x,s}(\xi)dxdyds\\ \notag
&& +\sum_{k\geq 1}\int^t_0\int_{(\mathbb{T}^N)^2}\int_{\mathbb{R}^2}\bar{f}_1(s,x,\xi)\alpha g_{k}(y,\zeta)h^{\varepsilon,k}(s)d\xi d\bar{\nu}^{2,\varepsilon}_{y,s}(\zeta)dxdyds\\ \notag
 && +\int_{(0,t]}\int_{(\mathbb{T}^N)^2}\int_{\mathbb{R}^2}{f}^{-}_2(s,y,\zeta)\partial_{\xi} \alpha d{m}^\varepsilon_1(x,\xi,s)d\zeta dy\\ \notag
 && -\int_{(0,t]}\int_{(\mathbb{T}^N)^2}\int_{\mathbb{R}^2}\bar{f}^{+}_1(s,x,\xi)\partial_{\zeta} \alpha d\bar{m}^\varepsilon_2(y,\zeta,s)d\xi dx\\ \notag
&& -\sqrt{\varepsilon}\sum_{k\geq 1}\int^t_0\int_{(\mathbb{T}^N)^2}\int_{\mathbb{R}^2}{f}_2(s,y,\zeta)g_{k}(x,\xi)\alpha d\zeta d{\nu}^{1,\varepsilon}_{x,s}(\xi)dxdyd\beta_k(s)\\
&=:&\langle\langle \bar{f}_{1,0}{f}_{2,0}, \alpha \rangle\rangle+\bar{J}_1+\bar{J}_2+\bar{J}_3+\bar{J}_4+\bar{J}_5+\bar{J}_6+\bar{J}_7,\quad a.s..
\end{eqnarray*}
By the same method as the proof of Theorem 15 in \cite{D-V-1}, we have
\begin{eqnarray*}
\sup_{t\in [0,T]}(J_5(t)+J_6(t))\leq 0,\quad a.s., \quad \sup_{t\in [0,T]}(\bar{J}_5(t)+\bar{J}_6(t))\leq 0 , \quad a.s..
\end{eqnarray*}
Taking $\alpha(x,y,\xi,\zeta)=\rho_{\gamma}(x-y)\psi_{\delta}(\xi-\zeta)$, where $\rho_{\gamma}$ and $\psi_{\delta}$ are approximations to the identity on $\mathbb{T}^N$ and $\mathbb{R}$, respectively. Then, we have
\begin{eqnarray}\notag
&& \int_{(\mathbb{T}^N)^2}\int_{\mathbb{R}^2}\rho_\gamma (x-y)\psi_{\delta}(\xi-\zeta)(f^{\pm}_1(x,t,\xi)\bar{f}^{\pm}_2(y,t,\zeta)+\bar{f}^{\pm}_1(x,t,\xi){f}^{\pm}_2(y,t,\zeta))d\xi d\zeta dxdy\\ \notag
&\leq& \int_{(\mathbb{T}^N)^2}\int_{\mathbb{R}^2}\rho_\gamma (x-y)\psi_{\delta}(\xi-\zeta)(f_{1,0}(x,\xi)\bar{f}_{2,0}(y,\zeta)+\bar{f}_{1,0}(x,\xi){f}_{2,0}(y,\zeta))d\xi d\zeta dxdy\\
\label{eee-2}
&& \ +\tilde{J}_1(t)+\tilde{\bar{J}}_1(t)+\tilde{J}_2(t)+\tilde{\bar{J}}_2(t)
+\tilde{J}_3(t)+\tilde{\bar{J}}_3(t)+\tilde{J}_4(t)+\tilde{\bar{J}}_4(t)+\tilde{J}_7(t)+\tilde{\bar{J}}_7(t), \quad a.s.,
\end{eqnarray}
where $\tilde{J}_i, \tilde{\bar{J}}_i$  in (\ref{eee-2}) are the corresponding $J_i, \bar{J}_i$ with $\rho, \psi$ replaced by $\rho_{\gamma}(x-y) $ and $\psi_{\delta}(\xi-\zeta)$, respectively, for $i=1,2,3,4,7$.

Utilizing the same method as the proof of Theorem 15 in \cite{D-V-1}, it gives
\begin{eqnarray*}
\mathbb{E}\sup_{t\in [0,T]}|\tilde{J}_1(t)|\leq TC_p \delta \gamma^{-1}, \quad
 \mathbb{E}\sup_{t\in [0,T]}|\tilde{\bar{J}}_1(t)|\leq TC_p \delta \gamma^{-1}.
\end{eqnarray*}
%
{ With the aid of $\gamma_1(\xi,\zeta)=\gamma_2(\xi,\zeta)$ and by using (\ref{equ-28}), we have
\begin{eqnarray*}\notag
\tilde{\bar{J}}_2&=&\tilde{J}_2\\
&=&\frac{\varepsilon}{2}\int^t_0\int_{(\mathbb{T}^N)^2}\int_{\mathbb{R}^2}\alpha  G^2(x,\xi)d\nu^{1,\varepsilon}_{x,s}\otimes \bar{\nu}^{2,\varepsilon}_{y,s}(\xi,\zeta)dxdyds\\ \notag
&\leq& \frac{\varepsilon}{2}D_0\int^t_0\int_{(\mathbb{T}^N)^2}\int_{\mathbb{R}^2}\alpha(1+|\xi|^2)  d\nu^{1,\varepsilon}_{x,s}\otimes \bar{\nu}^{2,\varepsilon}_{y,s}(\xi,\zeta)dxdyds\\ \notag
&\leq&\frac{\varepsilon}{2}D_0\int^t_0\int_{(\mathbb{T}^N)^2}\int_{\mathbb{R}^2}\alpha  d\nu^{1,\varepsilon}_{x,s}\otimes \bar{\nu}^{2,\varepsilon}_{y,s}(\xi,\zeta)dxdyds\\ \notag
&& +\frac{\varepsilon}{2}D_0\int^t_0\int_{(\mathbb{T}^N)^2}\int_{\mathbb{R}^2}\alpha  |\xi|^2d\nu^{1,\varepsilon}_{x,s}\otimes \bar{\nu}^{2,\varepsilon}_{y,s}(\xi,\zeta)dxdyds.
\end{eqnarray*}
Clearly, it holds that
\begin{eqnarray}\notag
&&\mathbb{E}\int_{(\mathbb{T}^N)^2}\int_{\mathbb{R}^2}\alpha  d\nu^{1,\varepsilon}_{x,s}\otimes \bar{\nu}^{2,\varepsilon}_{y,s}(\xi,\zeta)dxdy\\ \notag
&\leq& \mathbb{E} \|\psi_{\delta}\|_{L^{\infty}}\int_{(\mathbb{T}^N)^2}\int_{\mathbb{R}^2}\rho_\gamma(x-y)d\nu^{1,\varepsilon}_{x,s}\otimes \bar{\nu}^{2,\varepsilon}_{y,s}(\xi,\zeta)dxdy\\ \notag
&\leq & \|\psi_{\delta}\|_{L^{\infty}}\int_{(\mathbb{T}^N)^2}\rho_\gamma(x-y)dxdy\\
\label{q-1}
&\leq&\delta^{-1}.
\end{eqnarray}
Moreover, by utilizing the property that measures $\nu^{1,\varepsilon}_{x,s}$ and $\bar{\nu}^{2,\varepsilon}_{y,s}$ vanish at infinity, it follows that
\begin{eqnarray}\notag
&&\mathbb{E}\int_{(\mathbb{T}^N)^2}\int_{\mathbb{R}^2}\alpha  |\xi|^2d\nu^{1,\varepsilon}_{x,s}\otimes \bar{\nu}^{2,\varepsilon}_{y,s}(\xi,\zeta)dxdy\\ \notag
&\leq&\mathbb{E}\int_{(\mathbb{T}^N)^2}\rho_\gamma(x-y)
\int_{\mathbb{R}^2}\psi_{\delta}(\xi-\zeta)|\xi|^2d\nu^{1,\varepsilon}_{x,s}\otimes \bar{\nu}^{2,\varepsilon}_{y,s}(\xi,\zeta)dxdy\\ \notag
&\leq& \mathbb{E}\|\psi_{\delta}\|_{L^{\infty}} \int_{(\mathbb{T}^N)^2}\rho_\gamma(x-y)\int_{\mathbb{R}^2}|\xi|^2d\nu^{1,\varepsilon}_{x,s}\otimes \bar{\nu}^{2,\varepsilon}_{y,s}(\xi,\zeta)dxdy\\ \notag
&\leq& C\delta^{-1}\int_{(\mathbb{T}^N)^2}\rho_\gamma(x-y)dxdy\\
\label{q-2}
&\leq& C\delta^{-1}.
\end{eqnarray}
Hence, combining (\ref{q-1}) and (\ref{q-2}), we deduce that
\begin{eqnarray*}
\mathbb{E}\sup_{t\in [0,T]}\tilde{\bar{J}}_2(t)=\mathbb{E}\sup_{t\in [0,T]}\tilde{J}_2(t)
\leq \frac{\varepsilon}{2} D_0T\delta^{-1}+\frac{\varepsilon}{2}C D_0T\delta^{-1}
\leq\varepsilon C D_0T\delta^{-1}.
\end{eqnarray*}
}
Recall
\begin{eqnarray}\label{eee-3}
\gamma_2(\zeta,\xi)=\int^{\infty}_{\zeta}\psi_{\delta}(\xi-\zeta')d\zeta'.
\end{eqnarray}
Using the similar arguments as in the proof of Proposition \ref{prp-1}, we have
\begin{eqnarray*}
\tilde{\bar{J}}_3+\tilde{\bar{J}}_4&=&\tilde{J}_3+\tilde{J}_4\\
&=& \sum_{k\geq 1}\int^t_0\int_{(\mathbb{T}^N)^2}\int_{\mathbb{R}^2}\gamma_2(\zeta,\xi)\rho_\gamma(x-y)\Big(g_{k}(x,\xi) -g_{k}(y,\zeta)\Big)h^{\varepsilon,k}(s)d \nu^{1,\varepsilon}_{x,s}\otimes \bar{\nu}^{2,\varepsilon}_{y,s}(\xi,\zeta) dxdyds\\
&\leq& \sum_{k\geq 1}\int^t_0\int_{(\mathbb{T}^N)^2}\int_{\mathbb{R}^2}\gamma_2(\zeta,\xi)\rho_\gamma(x-y)|g_{k}(x,\xi)-g_{k}(y,\zeta)||h^{\varepsilon,k}(s)| d \nu^{1,\varepsilon}_{x,s}\otimes \bar{\nu}^{2,\varepsilon}_{y,s}(\xi,\zeta) dxdyds\\
&\leq& \int^t_0\int_{(\mathbb{T}^N)^2}\int_{\mathbb{R}^2}\gamma_2(\zeta,\xi)\rho_\gamma(x-y)\Big(\sum_{k\geq 1}|g_{k}(x,\xi)-g_{k}(y,\zeta)|^2\Big)^{\frac{1}{2}}\Big(\sum_{k\geq 1}|h^{\varepsilon,k}(s)|^2\Big)^{\frac{1}{2}} d \nu^{1,\varepsilon}_{x,s}\otimes \bar{\nu}^{2,\varepsilon}_{y,s}(\xi,\zeta) dxdyds\\
&\leq& \sqrt{D_1}\int^t_0|h^\varepsilon(s)|_{U}\int_{(\mathbb{T}^N)^2}\int_{\mathbb{R}^2}\gamma_2(\zeta,\xi)\rho_\gamma(x-y)|x-y| d \nu^{1,\varepsilon}_{x,s}\otimes \bar{\nu}^{2,\varepsilon}_{y,s}(\xi,\zeta) dxdyds\\
&&\ + \sqrt{D_1}\int^t_0|h^\varepsilon(s)|_{U}
\int_{(\mathbb{T}^N)^2}\rho_\gamma(x-y)\int_{\mathbb{R}^2}\gamma_2(\zeta,\xi)|\xi-\zeta|d \nu^{1,\varepsilon}_{x,s}\otimes \bar{\nu}^{2,\varepsilon}_{y,s}(\xi,\zeta) dxdyds\\
&=:& \tilde{J}_{3,1}+\tilde{J}_{4,1}.
\end{eqnarray*}
By
\begin{eqnarray*}
\int_{(\mathbb{T}^N)^2}\rho_\gamma(x-y)|x-y|dxdy&\leq& \gamma,\\
\mathbb{E}\int_{(\mathbb{T}^N)^2}\gamma_2(\zeta,\xi) d \nu^{1,\varepsilon}_{x,s}\otimes \bar{\nu}^{2,\varepsilon}_{y,s}(\xi,\zeta)&\leq& 1,
\end{eqnarray*}
it follows that
\begin{eqnarray*}
\mathbb{E}\sup_{t\in [0,T]}\tilde{J}_{3,1}(t)\leq \sqrt{D_1}\gamma(T+M).
\end{eqnarray*}
Using the same method as the estimate of $\tilde{K}_{2,2}$ in Theorem \ref{thm-2}, we have
\begin{eqnarray*}
\tilde{J}_{4,1}&\leq& 2\sqrt{D_1}\delta(T+M)\\
&& +\sqrt{D_1}\int^t_0|h^\varepsilon(s)|_{U}
\int_{(\mathbb{T}^N)^2}\int_{\mathbb{R}^2}\rho_\gamma (x-y)\psi_{\delta}(\xi-\zeta)(f^{\pm}_1\bar{f}^{\pm}_2+\bar{f}^{\pm}_1{f}^{\pm}_2)d\xi d\zeta dxdyds, \quad a.s..
\end{eqnarray*}
Combining all the previous estimates, it follows that
\begin{eqnarray*}
&& \int_{(\mathbb{T}^N)^2}\int_{\mathbb{R}^2}\rho_\gamma (x-y)\psi_{\delta}(\xi-\zeta)(f^{\pm}_1(x,t,\xi)\bar{f}^{\pm}_2(y,t,\zeta)+\bar{f}^{\pm}_1(x,t,\xi){f}^{\pm}_2(y,t,\zeta))d\xi d\zeta dxdy\\ \notag
&\leq& \int_{\mathbb{T}^N}\int_{\mathbb{R}}(f_{1,0}(x,\xi)\bar{f}_{2,0}(x,\xi)+\bar{f}_{1,0}(x,\xi){f}_{2,0}(x,\xi))dxd\xi +|\mathcal{E}_0(\gamma,\delta)|\\
&&\ +|\tilde{J}_1(t)|+|\tilde{\bar{J}}_1(t)|+2\tilde{\bar{J}}_2(t)+2\tilde{J}_{3,1}(t)+|\tilde{J}_7|(t)+|\tilde{\bar{J}}_7|(t)+4\sqrt{D_1}\delta(T+M)\\
&&\ +2\sqrt{D_1}\int^t_0|h^\varepsilon(s)|_{U}
\int_{(\mathbb{T}^N)^2}\int_{\mathbb{R}^2}\rho_\gamma (x-y)\psi_{\delta}(\xi-\zeta)(f^{\pm}_1\bar{f}^{\pm}_2+\bar{f}^{\pm}_1{f}^{\pm}_2)d\xi d\zeta dxdyds, \quad a.s..
\end{eqnarray*}
Applying Gronwall inequality, we get
\begin{eqnarray*}
&& \int_{(\mathbb{T}^N)^2}\int_{\mathbb{R}^2}\rho_\gamma (x-y)\psi_{\delta}(\xi-\zeta)(f^{\pm}_1(x,t,\xi)\bar{f}^{\pm}_2(y,t,\zeta)+\bar{f}^{\pm}_1(x,t,\xi){f}^{\pm}_2(y,t,\zeta))d\xi d\zeta dxdy\\ \notag
&\leq& e^{2\sqrt{D_1}(T+M)}\Big[\int_{\mathbb{T}^N}\int_{\mathbb{R}}(f_{1,0}\bar{f}_{2,0}+\bar{f}_{1,0}{f}_{2,0})dxd\xi+|\mathcal{E}_0(\gamma,\delta)|+4\sqrt{D_1}\delta(T+M)\Big]\\
&&\ +e^{2\sqrt{D_1}(T+M)}\Big[|\tilde{J}_1(t)|+|\tilde{\bar{J}}_1(t)|+2\tilde{\bar{J}}_2(t)+2\tilde{J}_{3,1}(t)+|\tilde{J}_7|(t)+|\tilde{\bar{J}}_7|(t)\Big],\quad a.s..
\end{eqnarray*}
Thus, collecting all the above estimates, we deduce that
\begin{eqnarray}\notag
&&\int_{\mathbb{T}^N}\int_{\mathbb{R}}(f^{\pm}_1(x,t,\xi)\bar{f}^{\pm}_2(x,t,\xi)+\bar{f}^{\pm}_1(x,t,\xi){f}^{\pm}_2(x,t,\xi))dxd\xi\\ \notag
&=&\int_{(\mathbb{T}^N)^2}\int_{\mathbb{R}^2}(f^{\pm}_1(x,t,\xi)\bar{f}^{\pm}_2(y,t,\zeta)+\bar{f}^{\pm}_1(x,t,\xi){f}^{\pm}_2(y,t,\zeta))\rho_{\gamma}(x-y)\psi_{\delta}(\xi-\zeta)dxdyd\xi d\zeta+\mathcal{E}_t(\gamma,\delta)\\
\notag
&\leq& e^{2\sqrt{D_1}(T+M)}\Big[\int_{\mathbb{T}^N}\int_{\mathbb{R}}(f_{1,0}\bar{f}_{2,0}+\bar{f}_{1,0}{f}_{2,0})dxd\xi+|\mathcal{E}_0(\gamma,\delta)|+4\sqrt{D_1}\delta(T+M)\Big]\\
\notag
&&\ +e^{2\sqrt{D_1}(T+M)}\Big[|\tilde{J}_1(t)|+|\tilde{\bar{J}}_1(t)|+2\tilde{\bar{J}}_2(t)+2\tilde{J}_{3,1}(t)+|\tilde{J}_7|(t)+|\tilde{\bar{J}}_7|(t)\Big]+\mathcal{E}_t(\gamma,\delta)\\
\notag
&=:& e^{2\sqrt{D_1}(T+M)}\int_{\mathbb{T}^N}\int_{\mathbb{R}}(f_{1,0}\bar{f}_{2,0}+\bar{f}_{1,0}{f}_{2,0})dxd\xi\\
\label{qeq-21}
&&
 +e^{2\sqrt{D_1}(T+M)}(|\tilde{J}_7|(t)+|\tilde{\bar{J}}_7|(t))+r(\varepsilon,\gamma,\delta,t), \quad a.s.,
\end{eqnarray}
where the remainder is given by
 \begin{eqnarray*}
r(\varepsilon,\gamma,\delta,t)=e^{2\sqrt{D_1}(T+M)}[|\tilde{J}_1(t)|+|\tilde{\bar{J}}_1(t)|+2\tilde{\bar{J}}_2(t)+4\sqrt{D_1}\delta(T+M)+|\mathcal{E}_0(\gamma,\delta)]|+\mathcal{E}_t(\gamma,\delta).
\end{eqnarray*}

Applying the Burkholder-Davis-Gundy inequality, and utilizing (\ref{eee-3}), (\ref{equ-28}) that
\begin{eqnarray*}
\mathbb{E}\sup_{t\in [0,T]}|\tilde{J}_7|(t)&\leq&\sqrt{\varepsilon}\mathbb{E}\sup_{t\in [0,T]}|\sum_{k\geq 1}\int^t_0\int_{(\mathbb{T}^N)^2}\int_{\mathbb{R}^2}\bar{f}_2(s,y,\zeta)g_{k}(x,\xi)\alpha d\zeta d{\nu}^{1,\varepsilon}_{x,s}(\xi)dxdyd\beta_k(s)|\\
&=& \sqrt{\varepsilon}\mathbb{E}\sup_{t\in [0,T]}|\sum_{k\geq 1}\int^t_0\int_{(\mathbb{T}^N)^2}\int_{\mathbb{R}^2}\bar{f}_2(s,y,\zeta)\partial_{\zeta} \gamma_2(\xi,\zeta) \rho_{\gamma}(x-y)g_{k}(x,\xi) d\zeta d{\nu}^{1,\varepsilon}_{x,s}(\xi)dxdyd\beta_k(s)|\\
&=& \sqrt{\varepsilon}\mathbb{E}\sup_{t\in [0,T]}|\sum_{k\geq 1}\int^t_0\int_{(\mathbb{T}^N)^2}\int_{\mathbb{R}^2}\gamma_2(\xi,\zeta) \rho_{\gamma}(x-y)g_{k}(x,\xi) d\nu^{1,\varepsilon}_{x,s}\otimes \bar{\nu}^{2,\varepsilon}_{y,s}(\xi,\zeta)dxdyd\beta_k(s)|\\
&\leq& \sqrt{\varepsilon}\mathbb{E}\Big[\int^T_0\int_{(\mathbb{T}^N)^2}\int_{\mathbb{R}^2}\gamma^2_2(\xi,\zeta) \rho^2_{\gamma}(x-y)\Big(\sum_{k\geq 1}g^2_{k}(x,\xi)\Big)d\nu^{1,\varepsilon}_{x,s}\otimes \bar{\nu}^{2,\varepsilon}_{y,s}(\xi,\zeta)dxdyds\Big]^{\frac{1}{2}}\\
&\leq& \sqrt{\varepsilon}\sqrt{D_0}\mathbb{E}\Big[\int^T_0\int_{(\mathbb{T}^N)^2}\int_{\mathbb{R}^2}\gamma^2_2(\xi,\zeta) \rho^2_{\gamma}(x-y)(1+|\xi|^2)d\nu^{1,\varepsilon}_{x,s}\otimes \bar{\nu}^{2,\varepsilon}_{y,s}(\xi,\zeta)dxdyds\Big]^{\frac{1}{2}}\\
&\leq& \sqrt{\varepsilon}\sqrt{D_0}\Big[\mathbb{E}\int^T_0\int_{(\mathbb{T}^N)^2}\int_{\mathbb{R}^2}\gamma^2_2(\xi,\zeta) \rho^2_{\gamma}(x-y)(1+|\xi|^2)d\nu^{1,\varepsilon}_{x,s}\otimes \bar{\nu}^{2,\varepsilon}_{y,s}(\xi,\zeta)dxdyds\Big]^{\frac{1}{2}}\\
&\leq&\sqrt{\varepsilon}\sqrt{D_0}\gamma^{-N}\Big[\mathbb{E}\int^T_0\int_{(\mathbb{T}^N)^2}\int_{\mathbb{R}^2} \gamma^2_2(\xi,\zeta)(1+|\xi|^2)d\nu^{1,\varepsilon}_{x,s}\otimes \bar{\nu}^{2,\varepsilon}_{y,s}(\xi,\zeta)dxdyds\Big]^{\frac{1}{2}}.
\end{eqnarray*}
Taking into account the following fact
\begin{eqnarray*}
&&\mathbb{E}\int^T_0\int_{(\mathbb{T}^N)^2}\int_{\mathbb{R}^2}\gamma^2_2(\xi,\zeta)(1+|\xi|^2)d\nu^{1,\varepsilon}_{x,s}\otimes \bar{\nu}^{2,\varepsilon}_{y,s}(\xi,\zeta)dxdyds\\
&\leq& \mathbb{E}\int^T_0\int_{(\mathbb{T}^N)^2}\int_{\mathbb{R}^2}(1+|\xi|^2)d\nu^{1,\varepsilon}_{x,s}\otimes \bar{\nu}^{2,\varepsilon}_{y,s}(\xi,\zeta)dxdyds\leq CT,
\end{eqnarray*}
we further deduce that
\begin{eqnarray*}
\mathbb{E}\sup_{t\in [0,T]}|\tilde{J}_7|(t)
\leq C\sqrt{\varepsilon}\sqrt{D_0 T}\gamma^{-N}.
\end{eqnarray*}
By the same method as above, we deduce that
\begin{eqnarray*}
\mathbb{E}\sup_{t\in [0,T]}|\tilde{\bar{J}}_7|(t)
\leq C\sqrt{\varepsilon}\sqrt{D_0 T}\gamma^{-N}.
\end{eqnarray*}
For the remainder, we have
\begin{eqnarray}\notag
 \mathbb{E}\underset{0\leq t\leq T}{{\rm{ess\sup}}}\ r(\varepsilon,\gamma,\delta,t)
&\leq& 2e^{2\sqrt{D_1}(T+M)}[TC_p \delta \gamma^{-1}+\varepsilon C D_0T\delta^{-1} +\sqrt{D_1}(2\delta+\gamma)(T+M)]\\
\label{qeq-23}
&&\ +(e^{2\sqrt{D_1}(T+M)}+1)\mathbb{E}\underset{0\leq t\leq T}{{\rm{ess\sup}}}\ |\mathcal{E}_t(\gamma,\delta)|.
\end{eqnarray}
{In the following, we aim to make estimates of the error term $\mathbb{E}\underset{0\leq t\leq T}{{\rm{ess\sup}}}\ |\mathcal{E}_t(\gamma, \delta)|$ by utilizing a similar method as the proof of Proposition 6.1 and Theorem 6.2 in \cite{DHV}.

For any $t\in [0,T]$, we have
\begin{eqnarray*}
\mathcal{E}_t(\gamma, \delta)&=&\int_{\mathbb{T}^N}\int_{\mathbb{R}}(f^{\pm}_1(x,t,\xi)\bar{f}^{\pm}_2(x,t,\xi)+\bar{f}^{\pm}_1(x,t,\xi){f}^{\pm}_2(x,t,\xi))d\xi dx
\\
&& -\int_{(\mathbb{T}^N)^2}\int_{\mathbb{R}^2}(f^{\pm}_1(x,t,\xi)\bar{f}^{\pm}_2(y,t,\zeta)+\bar{f}^{\pm}_1(x,t,\xi){f}^{\pm}_2(y,t,\zeta))\rho_{\gamma}(x-y)\psi_{\delta}(\xi-\zeta)dxdyd\xi d\zeta \\
&=& \Big[\int_{\mathbb{T}^N}\int_{\mathbb{R}}(f^{\pm}_1(x,t,\xi)\bar{f}^{\pm}_2(x,t,\xi)+\bar{f}^{\pm}_1(x,t,\xi){f}^{\pm}_2(x,t,\xi))d\xi dx\\
&& -\int_{(\mathbb{T}^N)^2}\int_{\mathbb{R}}\rho_{\gamma}(x-y)(f^{\pm}_1(x,t,\xi)\bar{f}^{\pm}_2(y,t,\xi)+\bar{f}^{\pm}_1(x,t,\xi){f}^{\pm}_2(y,t,\xi))d\xi dxdy\Big]\\
&& +\Big[\int_{(\mathbb{T}^N)^2}\int_{\mathbb{R}}\rho_{\gamma}(x-y)(f^{\pm}_1(x,t,\xi)\bar{f}^{\pm}_2(y,t,\xi)+\bar{f}^{\pm}_1(x,t,\xi){f}^{\pm}_2(y,t,\xi))d\xi dxdy\\
&& -\int_{(\mathbb{T}^N)^2}\int_{\mathbb{R}^2}(f^{\pm}_1(x,t,\xi)\bar{f}^{\pm}_2(y,t,\zeta)+\bar{f}^{\pm}_1(x,t,\xi){f}^{\pm}_2(y,t,\zeta))\rho_{\gamma}(x-y)\psi_{\delta}(\xi-\zeta)dxdyd\xi d\zeta\Big]\\
&=:&H_1+H_2,
\end{eqnarray*}
Applying the same method as (\ref{qeq-14}) and (\ref{qeq-14-1}), it follows that
\begin{eqnarray}\label{qeq-10}
|H_2(t)|\leq 2\delta,\quad a.s..
\end{eqnarray}
Moreover, it is easy to deduce that
\begin{eqnarray*}
|H_1(t)|&\leq& \Big|\int_{(\mathbb{T}^N)^2}\rho_{\gamma}(x-y)\int_{\mathbb{R}}I_{\bar{u}^{\varepsilon, \pm}(x,t)>\xi}(I_{v^{\varepsilon,\pm}(x,t)\leq \xi}-I_{v^{\varepsilon,\pm}(y,t)\leq \xi})d\xi dxdy\Big|\\
&& +\Big|\int_{(\mathbb{T}^N)^2}\rho_{\gamma}(x-y)\int_{\mathbb{R}}I_{\bar{u}^{\varepsilon, \pm}(x,t)\leq\xi}(I_{v^{\varepsilon,\pm}(x,t)> \xi}-I_{v^{\varepsilon,\pm}(y,t)> \xi})d\xi dxdy\Big|\\
&\leq& 2\int_{(\mathbb{T}^N)^2}\rho_{\gamma}(x-y)|v^{\varepsilon,\pm}(x,t)-v^{\varepsilon,\pm}(y,t)|dxdy.
\end{eqnarray*}
By (\ref{qeq-10}) and (\ref{qeq-18}), we have
\begin{eqnarray*}
&&\mathbb{E} \underset{0\leq t\leq T}{{\rm{ess\sup}}}\ \int_{(\mathbb{T}^N)^2}\rho_{\gamma}(x-y)|v^{\varepsilon,\pm}(x,t)-v^{\varepsilon,\pm}(y,t)|dxdy\\
&=& \mathbb{E}\underset{0\leq t\leq T}{{\rm{ess\sup}}}\ \int_{(\mathbb{T}^N)^2}\int_{\mathbb{R}}\rho_{\gamma}(x-y)(f^{\pm}_2(x,t,\xi)\bar{f}^{\pm}_2(y,t,\xi)+\bar{f}^{\pm}_2(x,t,\xi){f}^{\pm}_2(y,t,\xi))d\xi dxdy\\
&\leq& \mathbb{E}\underset{0\leq t\leq T}{{\rm{ess\sup}}}\  \int_{(\mathbb{T}^N)^2}\int_{\mathbb{R}^2}\rho_{\gamma}(x-y)\psi_{\delta}(\xi-\zeta)(f^{\pm}_2(x,t,\xi)\bar{f}^{\pm}_2(y,t,\zeta)+\bar{f}^{\pm}_2(x,t,\xi){f}^{\pm}_2(y,t,\zeta))d\xi d\zeta dxdy+2\delta\\
&\leq& e^{2\sqrt{D_1}(T+M)}\Big[\int_{\mathbb{T}^N}\int_{\mathbb{R}}(f_{2,0}\bar{f}_{2,0}+\bar{f}_{2,0}{f}_{2,0})d\xi dx+|\mathcal{E}_0(\gamma,\delta)|\Big]\\
&&\ +2e^{2\sqrt{D_1}(T+M)}[TC_p \delta \gamma^{-1}+\sqrt{D_1}(\gamma+2\delta)(T+M)]+2\delta\\
&\leq &e^{2\sqrt{D_1}(T+M)}|\mathcal{E}_0(\gamma,\delta)|+2e^{2\sqrt{D_1}(T+M)}[TC_p \delta \gamma^{-1}+\sqrt{D_1}(\gamma+2\delta)(T+M)]+2\delta,
\end{eqnarray*}
where $|\mathcal{E}_0(\gamma, \delta)|\rightarrow 0$, when $\gamma, \delta\rightarrow 0$. Then,
\begin{eqnarray*}
\mathbb{E}\underset{0\leq t\leq T}{{\rm{ess\sup}}}\ |H_1(t)|&\leq& 4\delta+2 e^{2\sqrt{D_1}(T+M)}\mathcal{E}_0(\gamma,\delta)+4e^{2\sqrt{D_1}(T+M)}[TC_p \delta \gamma^{-1}+\sqrt{D_1}(\gamma+2\delta)(T+M)].
\end{eqnarray*}
Combining all the above estimates, we conclude that
\begin{eqnarray*}
\mathbb{E}\underset{0\leq t\leq T}{{\rm{ess\sup}}}\ |\mathcal{E}_t(\gamma, \delta)|\leq 6\delta+2e^{2\sqrt{D_1}(T+M)}|\mathcal{E}_0(\gamma,\delta)|+4e^{2\sqrt{D_1}(T+M)}[TC_p \delta \gamma^{-1}+\sqrt{D_1}(\gamma+2\delta)(T+M)].
\end{eqnarray*}
Hence, we deduce from (\ref{qeq-23}) that
\begin{eqnarray*}
&&\mathbb{E}\underset{0\leq t\leq T}{{\rm{ess\sup}}}\ r(\varepsilon,\gamma,\delta,t)\\
&\leq& 2e^{2\sqrt{D_1}(T+M)}[TC_p \delta \gamma^{-1}+\varepsilon C D_0T\delta^{-1} +\sqrt{D_1}(2\delta+\gamma)(T+M)]\\
&&\ +6(e^{2\sqrt{D_1}(T+M)}+1)\delta+2e^{2\sqrt{D_1}(T+M)}(e^{\sqrt{D_1}(T+M)}+1)|\mathcal{E}_0(\gamma,\delta)|\\
&& +4e^{2\sqrt{D_1}(T+M)}(e^{\sqrt{D_1}(T+M)}+1)[TC_p \delta \gamma^{-1}+\sqrt{D_1}(\gamma+2\delta)(T+M)].
\end{eqnarray*}
}
Letting
\[
\delta=\gamma^{\frac{4}{3}},\quad \gamma=\varepsilon^{\frac{1}{2(1+N)}},
\]
then,
\begin{eqnarray}\label{e-47}
\mathbb{E}\sup_{t\in [0,T]}|\tilde{J}_7|(t)
\leq C\sqrt{D_0 T}\varepsilon^{\frac{1}{2(1+N)}}\rightarrow 0, \quad \varepsilon \rightarrow 0,\\
\label{e-48}
\mathbb{E}\sup_{t\in [0,T]}|\tilde{\bar{J}}_7|(t)
\leq C\sqrt{D_0 T}\varepsilon^{\frac{1}{2(1+N)}}\rightarrow 0, \quad \varepsilon \rightarrow 0,
\end{eqnarray}
and
\begin{eqnarray}\notag
&& \mathbb{E}\underset{0\leq t\leq T}{{\rm{ess\sup}}}\ r(\varepsilon,\gamma,\delta,t)\\ \notag
 &\leq& 2e^{2\sqrt{D_1}(T+M)}[TC_p \varepsilon^{\frac{1}{6(1+N)}}+ C D_0T\varepsilon^{\frac{1+3N}{3(1+N)}} +\sqrt{D_1}(2\varepsilon^{\frac{2}{3(1+N)}}+\varepsilon^{\frac{1}{2(1+N)}})(T+M)]\\ \notag
 &&\ +6(e^{2\sqrt{D_1}(T+M)}+1)\varepsilon^{\frac{2}{3(1+N)}}+2e^{2\sqrt{D_1}(T+M)}(e^{2\sqrt{D_1}(T+M)}+1)\mathcal{E}_0(\gamma,\delta)\\ \notag
 &&\ +4e^{2\sqrt{D_1}(T+M)}(e^{2\sqrt{D_1}(T+M)}+1)[TC_p \varepsilon^{\frac{1}{6(1+N)}}+\sqrt{D_1}(2\varepsilon^{\frac{2}{3(1+N)}}+\varepsilon^{\frac{1}{2(1+N)}})(T+M)]\\
\label{qeq-12}
 &\rightarrow&0, \quad {\rm{as}} \quad \varepsilon\rightarrow0.
\end{eqnarray}
Notice that $f_1=I_{\bar{u}^\varepsilon>\xi}$ and $f_2=I_{v^{\varepsilon}>\xi}$ with initial data $f_{1,0}=I_{u_0>\xi}$ and ${f}_{2,0}=I_{u_0>\xi}$, respectively. With the help of identity (\ref{equ-1}), we deduce from (\ref{qeq-21}) that
\begin{eqnarray*}\notag
\mathbb{E}\underset{0\leq t\leq T}{{\rm{ess\sup}}}\ \|\bar{u}^\varepsilon(t)-v^\varepsilon(t)\|_{L^1(\mathbb{T}^N)}
\leq e^{2\sqrt{D_1}(T+M)}\Big(\mathbb{E}\sup_{t\in [0,T]}|\tilde{J}_7|(t)+\mathbb{E}\sup_{t\in [0,T]}|\tilde{\bar{J}}_7|(t)\Big)+\mathbb{E}\underset{0\leq t\leq T}{{\rm{ess\sup}}}\ r(\varepsilon,\gamma,\delta,t).
\end{eqnarray*}
Hence, it follows from (\ref{e-47}), (\ref{e-48}) and (\ref{qeq-12}) that
\begin{eqnarray*}
&&\mathbb{E}\|\bar{u}^\varepsilon-v^\varepsilon\|_{L^1([0,T];L^1(\mathbb{T}^N))}\\ \notag
&\leq& T\cdot \mathbb{E} \underset{0\leq t\leq T}{{\rm{ess\sup}}}\ \|\bar{u}^\varepsilon(t)-v^\varepsilon(t)\|_{L^1(\mathbb{T}^N)}\rightarrow 0,
\end{eqnarray*}
which implies that $\|\bar{u}^\varepsilon-v^\varepsilon\|_{L^1([0,T];L^1(\mathbb{T}^N))}\rightarrow 0$ in probability, as $\varepsilon\rightarrow 0$. We complete the proof.
\end{proof}

\vskip 0.2cm
\noindent{\bf  Acknowledgements}\quad  The authors are grateful to the anonymous referees for their constructive comments and  valuable suggestions. This work is partly supported by National Natural Science Foundation of China (No. 11431014,11801032, 11671372, 11721101). Key Laboratory of Random Complex Structures and Data Science, Academy of Mathematics and Systems Science, Chinese Academy of Sciences (No. 2008DP173182). China Postdoctoral Science Foundation funded project (No. 2018M641204).

\def\refname{ References}

\end{document}